\newcommand{\fg}{\mathfrak{g}}
\newcommand{\cF}{\mathcal{F}}
\newcommand{\cH}{\mathcal{H}}
\newcommand{\cP}{\mathcal{P}}
\newcommand{\cS}{\mathcal{S}}
\newcommand{\cU}{\mathcal{U}}
\newcommand{\cW}{\mathcal{W}}
\newcommand{\cZ}{\mathcal{Z}}
\newcommand{\bD}{\mathbf{D}}
\newcommand{\bF}{\mathbf{F}}
\newcommand{\bI}{\mathbf{I}}
\newcommand{\bg}{\mathbf{g}}
\newcommand{\bx}{\mathbf{x}}
\newcommand{\fb}{\mathfrak{b}}
\newcommand{\fF}{\mathfrak{F}}
\newcommand{\fH}{\mathfrak{H}}
\newcommand{\fU}{\mathfrak{U}}
\newcommand{\fGr}{\mathfrak{Gr}}
\newcommand{\Gr}{\mathrm{Gr}}
\newcommand{\RP}{{\mathbb{RP}}}
\newcommand{\R}{\mathbb R}
\newcommand{\C}{\mathbb C}
\newcommand{\Z}{\mathbb Z}
\newcommand{\Zc}{\mathcal Z}
\newcommand{\Pc}{\mathcal P}
\newcommand{\Fb}{\mathbf F}
\newcommand{\id}{\mathrm {id}}
\newcommand{\genus}{\mathrm {genus}}
\newcommand{\inte}{\mathrm{int}}
\newcommand{\area}{\operatorname{area}}
\newcommand{\dmn}{\mathrm{dmn}}
\newcommand{\Sym}{\mathrm{Sym}}
\newcommand{\proj}{\operatorname{proj}}
\renewcommand{\Re}{\mathrm{Re}}
\renewcommand{\tilde}{\widetilde}
\newcommand{\rank}{\operatorname{rank}}
\newcommand{\spt}{\operatorname{spt}}
\newcommand{{\sing}}{\operatorname{sing}}
\newcommand{\x}{\times}
\newcommand{\ins}{\mathrm{in}}
\newcommand{\out}{\mathrm{out}}
\newcommand{\ord}{\mathrm{ord}}
\newcommand{\bz}{\mathbf z}
\newcommand{\GS}{\mathcal{S}}
\newcommand{\odd}{\mathrm{odd}}
\newcommand{\balpha}{\boldsymbol{\alpha}}
\title[Minimal surfaces with arbitrary genus in 3-spheres]{Minimal surfaces with arbitrary genus in 3-spheres of positive Ricci curvature}
\author{Adrian Chun-Pong Chu} 
\address{Cornell University, Ithaca, NY 14853, USA}
\email{cc2938@cornell.edu}
\date{\today}
\numberwithin{equation}{section}
\newtheorem{thm}{Theorem}[section]
\newtheorem{cor}[thm]{Corollary}
\newtheorem{prop}[thm]{Proposition}
\newtheorem{lem}[thm]{Lemma}
\theoremstyle{definition}
\newtheorem{defn}[thm]{Definition}
\newtheorem{exmp}[thm]{Example}
\newtheorem{rmk}[thm]{Remark}
\newtheorem{oqn}[thm]{Question}
\newtheorem*{question*}{Question}
\begin{document}

\begin{abstract}
We describe some topological structure in the set of all surfaces with finitely many singularities in the 3-sphere. 

    As an application, we prove that every Riemannian 3-sphere of positive Ricci curvature contains, for every $g$, a genus $g$ embedded minimal surface with area at most twice the first Simon-Smith width of the ambient 3-sphere.
\end{abstract}
\maketitle
\setcounter{tocdepth}{1}

\section{Introduction}

In past decades,  moduli spaces of    embedded surfaces   in   3-manifolds are much studied. A celebrated result of A. Hatcher \cite{Hat83} says the space of all smooth embedded spheres in $S^3$ is homotopy equivalent to $\RP^3$. Based on this, Johnson-McCullough showed that the space of all  unknotted tori in $S^3$ is homotopy equivalent to $\RP^2\x\RP^2$  \cite{JM13}, and Ketover-Liokumovich proved  analogous results for lens spaces \cite{ketoverLiokumovich2025smaleLensSpace}.  We also remark that, the  space of all embedded surfaces in a given 3-manifold $M$ is closely related to the Smale conjecture, which concerns the relationship between the space $\textrm{Diff}(M)$ of diffeomorphisms and the  subspace $\textrm{Isom}(M)$  of isometries. This, in turn, is a rich topic with fruitful progress in  recent decades \cite{ivanov1984SpaceDiffeo,mcculloughRubinstein1997Smale,gabai2001smale,hongKalliongisMcCulloughRubinstein2012diffeomorphisms,bamlerKleiner2019ricciContractibility,bamlerKleiner2023ricciDiffeo,ketoverLiokumovich2025smaleLensSpace}.

In this paper, we are going to  study the set of all embedded surfaces that {\it possibly have finitely many singularities} in the 3-sphere $S^3$. Besides the pure topological interest, there is actually an inevitable need to study surfaces with singularities, from a geometric analysis perspective. Heuristically, from a Morse theoretic point of view, the problem of constructing minimal surfaces in a 3-manifold $M$ is inherently related to   the topological structure of the set of all surfaces in $M$: This is because  minimal surfaces are by definition critical points of the area functional, defined on the set of all surfaces. However, under the gradient flow of the area functional, i.e.  mean curvature flow, surfaces may develop singularities. 

From a min-max point of view, which would be the main focus of this paper, consideration of singularities is also natural, because in a  min-max procedure the minimal surfaces obtained may have  lower genus than the members of the initial family one uses to apply min-max.

We now describe the setting. Let $M$ be    a closed Riemannian 3-manifold. We denote by $\cS(M)$   the set of all surfaces embedded in $M$ that possibly have finitely many singularities: For some technical reasons, it would be desirable to focus  on surfaces that  separate $M$ into two regions.  There is actually a natural way to define the notion of genus for surfaces with singularities. Now, let $\cS_{\leq g}(M)\subset \cS (M)$ be the subset of elements of  genus $\leq g$. Readers  may refer to \S \ref{sect:prelim} for the precise definitions.  

A main theorem of this paper is the following  topological result, which basically says, for every $g$,  there is some non-trivial relative structure for the pair $(\cS_{\leq g}(S^3),\cS_{\leq g-1}(S^3))$.

\begin{thm}\label{thm:mainTopo}
    In the $3$-sphere $S^3$, for every positive integer $g$, there exists a map ``continuous" in the sense of Definition \ref{def:Simon_Smith_family},  $$\Psi:X\to\cS_{\leq g}(S^3),$$
that cannot be deformed via ``pinch-off processes" to become a map into $\cS_{\leq g-1}(S^3)$.

\end{thm}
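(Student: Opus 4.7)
My approach would be to construct $\Psi$ as an explicit sweepout built from a genus $g$ Heegaard surface of $S^3$, and to detect its non-deformability via an obstruction-theoretic invariant on the filtered space $\cS_{\leq g}(S^3)$. Let $\Sigma_g \subset S^3$ be a standard genus $g$ Heegaard surface bounding handlebodies $H_+$ and $H_-$, and take the parameter space $X$ to be a finite CW complex whose topology reflects $g$ (for instance $X = (\RP^1)^g$, or a sphere built inductively). I would arrange $\Psi$ so that the interior of $X$ parametrizes isotopes of $\Sigma_g$ (for example, Dehn twists along a basis of non-separating curves, rotations under an $SO(4)$ subgroup, or finger-move deformations), while boundary strata of $X$ degenerate $\Sigma_g$ to spines of $H_\pm$. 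Simon--Smith continuity in the sense of Definition \ref{def:Simon_Smith_family} would then follow from the smoothness of the isotopies together with controlled degenerations at the boundary.

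For the non-deformation argument, the guiding principle is that a pinch-off on a genus $g$ Heegaard surface is dictated by the choice of a compressing disk in one of the handlebodies it bounds, so a continuous deformation of $\Psi$ through pinch-off processes into $\cS_{\leq g-1}(S^3)$ would require a coherent continuous choice of such compressing disks over a large subset of $X$. This is a section of a ``disk-choice bundle" over $\Emb(\Sigma_g, S^3)$ with fiber the disk complex of $\Sigma_g$, and the obstruction to its existence is a characteristic class pulled back to $X$ via $\Psi$. For $g = 1$ the non-triviality of this class should follow from the rich topology of $\Emb(T^2, S^3) \simeq \RP^2 \times \RP^2$ due to Johnson--McCullough. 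For general $g$ I would proceed inductively by attaching a handle to a non-trivial genus-$(g-1)$ family and checking that the inductive step preserves non-triviality of the pulled-back class.

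The main obstacle, and the technical heart of the argument, will be extending this obstruction theory from the smooth stratum $\Emb(\Sigma_g, S^3)$ to the full stratified space $\cS_{\leq g}(S^3)$, since a deformation through pinch-off processes must genuinely traverse lower-dimensional strata of singular surfaces, and the disk-choice bundle is naturally defined only on the smooth stratum. One must verify that a deformation cannot circumvent the obstruction by sneaking through singular configurations that formally bypass the disk-choice bundle. I expect this to require a careful analysis of how strata of $\cS_{\leq g}(S^3)$ are glued together --- stratified by the topological type of the surface and by the number and type of its singularities --- together with essential use of the precise Simon--Smith continuity notion fixed in Definition \ref{def:Simon_Smith_family}.
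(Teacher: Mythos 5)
There is a genuine gap, and it is not only in the stratified-space step you flag at the end: the core obstruction mechanism is not set up and would likely fail even on the smooth stratum. A deformation via pinch-off processes does not induce, continuously in the parameter, a choice of compressing disk: surgery necks may be inessential, the surgery curves and singular times can jump discontinuously from parameter to parameter, a single pinch-off process may perform many surgeries and shrinkings, and Definition \ref{defn:pinch_off} even allows entire components to be shrunk to points. So the claimed ``section of a disk-choice bundle'' is not forced by the hypothesis you are trying to contradict. Moreover, the natural fiber you propose (the complex of compressing disks of a handlebody) is contractible, so a bundle-section obstruction class would vanish identically; and since elements of $\cS(S^3)$ are closed subsets of $S^3$, Dehn twists of $\Sigma_g$ do not even produce new members of the family. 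Worse, the family you propose probably does not satisfy the theorem: for a family of mutually isotopic Heegaard surfaces obtained by rotating a fixed $\Sigma_g$ (degenerating to spines at $\partial X$), one can rotate a fixed meridian system along with the surface and compress all members coherently, i.e.\ the ``coherent choice of compressing disks'' exists, and the whole family can be deformed by simultaneous neck-pinches (and shrinkings) into $\cS_{\leq g-1}$, indeed into $\cS_0$. Nothing in your construction prevents this; some quantitative sweepout property is needed to rule it out.

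For comparison, the paper's proof hinges on two inputs absent from your plan. First, the family is built on $\RP^5\times B^{2g-2}$ by desingularizing the quadratic $5$-sweepout $\{a_0+a_1x_1+\cdots+a_5x_1x_2=0\}$ with towers of at most $g+1$ handles whose positions are the roots of trigonometric polynomials of degree $\leq g+1$; the $5$-sweepout property, combined with the fact that no genus-$0$ Simon--Smith family is a $5$-sweepout (Proposition \ref{prop:no5sweepoutGenus0}, coming from the Almgren--Pitts $5$-width of $S^3$), is what forces a $(2g-2)$-cycle $D$, dual to $\lambda^5$, on which any putative deformed family $\Psi'$ still has genus $\geq 1$. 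Second, the contradiction on $D$ is obtained not by choosing disks but by a homology-descent argument: pinch-offs can only shrink $H_1$ of the complementary regions, the linking form ties the rank of the surviving subgroups to the genus (Lemma \ref{lem:alexander}), and the resulting map into the finite poset $\Gr^g[1,g-1]$ carries a non-trivial class in $H_{2g-3}$ (Theorem \ref{thm:partialFSigma2}), which a deformation into $\cS_{\leq g-1}$ would null-homotope. If you want to salvage your plan, you would need both an analogue of the sweepout input (to exclude collapse to genus $0$) and an invariant of pinch-off deformations that, unlike a disk choice, is actually functorial under the allowed degenerations; the paper's $H_1$-of-complement descent is exactly such an invariant.
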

The notion of   ``pinch-off process" is stated  in Definition \ref{defn:deformViaPinchoff}. Essentially, it refers to a deformation process for elements in $\cS(M)$  that allows any combinations of the following three processes: (1) isotopy, (2) neck-pinch surgery, and (3)  shrinking some connected components into points: See Figure \ref{fig:pinchOff}. In particular,  pinch-off process  is genus-non-increasing. Regarding the notion of ``continuity" for the map $\Psi$, roughly speaking,  members of $\Psi$ are required to vary smoothly, except at finitely many  points.

In our proof, we explicitly construct such a map $\Psi$ whose  domain is $(2g+3)$-dimensional. This is reminiscent of  the fact that in the unit 3-sphere, the  Lawson minimal surface $\xi_{g,1}$  of genus $g$  has Morse index $2g+3$ \cite{Law70}, as proven by Kapouleas-Wiygul \cite{kapouleasWiygul2020LawsonIndex}.

As a consequence, using   Theorem 1.3 of  \cite{ChuLiWang2025GenusTwoI} by Chu-Li-Wang (see Theorem \ref{thm:TopoMinMax} below), which  builds on numerous  foundational results  in min-max theory    \cite{Smith82, CD03, DeLellisPellandini10,   Ket19, Zho20, MN21,WangZhou23FourMinimalSpheres}, we  prove the following result. First, given any Riemannian 3-sphere $S^3$, denote by $\cP$   the set of all 1-sweepouts by smooth embedded spheres that are continuous in the smooth topology. We define the  {\it first Simon-Smith width} (also called the {\it first spherical area width}) of $S^3$ by
$$\sigma_1(S^3):=\inf_{\Phi\in \cP}\max_{x\in \dmn(\Phi)}\area(\Phi(x)).$$ Note, if the ambient metric has positive Ricci curvature, then $\sigma_1(S^3)$ is equal to area of the least-area  embedded minimal sphere in $S^3$ (see \S \ref{sect:pSweepout}).

\begin{thm}\label{thm:mainMinimal}
    Every Riemannian $3$-sphere $S^3$ of positive Ricci curvature contains, for every $g$,  some orientable, embedded, genus $g$ minimal surface with area at most $2\sigma_1(S^3)$.
\end{thm}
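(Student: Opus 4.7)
The plan is to combine the topological non-triviality from Theorem \ref{thm:mainTopo} with the topological min-max machinery of Chu-Li-Wang (Theorem \ref{thm:TopoMinMax}). The heart of the matter is to exhibit a map $\Psi \colon X\to\cS_{\leq g}(S^3)$ which (i) is non-trivial under pinch-off deformations in the sense of Theorem \ref{thm:mainTopo}, and simultaneously (ii) satisfies the quantitative area bound $\max_{x\in X}\area(\Psi(x)) \leq 2\sigma_1(S^3)+\varepsilon$ for any prescribed $\varepsilon>0$.

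For (ii), I would start from the least-area embedded minimal sphere $\Sigma_0\subset S^3$, which under the positive Ricci assumption realizes $\sigma_1(S^3)$ (as recalled in \S\ref{sect:pSweepout}). Using a foliation of a tubular neighborhood of $\Sigma_0$ by parallel $2$-spheres, take two nearby copies $\Sigma^+$ and $\Sigma^-$ whose areas are each close to $\sigma_1(S^3)$. Attach $g+1$ thin necks between $\Sigma^+$ and $\Sigma^-$ at prescribed basepoints on $\Sigma_0$, producing an orientable surface of genus $g$ that separates $S^3$. Allowing the basepoints and the width/profile of the necks to vary gives a family whose parameter space can be arranged to be $(2g+3)$-dimensional — matching the Lawson index count $\index(\xi_{g,1})=2g+3$ from \cite{kapouleasWiygul2020LawsonIndex} — and whose members have total area at most $2\sigma_1(S^3)+O(\varepsilon)$, since thin necks contribute negligible area. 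I would then verify that this explicit construction realizes (up to pinch-off deformation) the topologically non-trivial $\Psi$ of Theorem \ref{thm:mainTopo}, namely by showing that the parameter at which basepoints collide, or necks are shrunk off, corresponds exactly to the finitely many singular times admitted by the Simon-Smith continuity.

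With $\Psi$ in hand, I apply Theorem \ref{thm:TopoMinMax}. This yields an orientable, embedded, smooth minimal surface $\Sigma\subset S^3$ with
\begin{equation*}
\area(\Sigma) \ \leq\ \max_{x\in X}\area(\Psi(x))\ \leq\ 2\sigma_1(S^3)+O(\varepsilon),
\end{equation*}
and whose genus is controlled from below by the pinch-off invariant of $\Psi$. The non-pinchability supplied by Theorem \ref{thm:mainTopo} is precisely what forbids the min-max procedure from producing a minimal surface of genus $<g$: any such degeneration would correspond to pinching off handles, yielding the prohibited deformation of $\Psi$ into $\cS_{\leq g-1}(S^3)$. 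Hence $\Sigma$ has genus exactly $g$. Letting $\varepsilon\to 0$ and invoking Choi-Schoen compactness for embedded minimal surfaces of uniformly bounded area and genus in a fixed closed $3$-manifold of positive Ricci curvature extracts, in the limit, a smooth embedded minimal surface of genus $g$ with area at most $2\sigma_1(S^3)$.

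The main obstacle is reconciling (i) and (ii): topological non-triviality of $\Psi$ requires a family rich enough to detect a $(2g+3)$-dimensional invariant, while the area bound $2\sigma_1(S^3)$ forces the construction to be essentially two sweepouts by $\sigma_1$-spheres glued along very thin tubes. One must argue that the near-singular ``thin neck'' regime of the family does \emph{not} trivialize it under pinch-off — equivalently, that the invariant provided by Theorem \ref{thm:mainTopo} is robust against the very degenerations (neck pinch, component shrinking) used by min-max to lower area. Aligning the explicit geometric construction with the abstract topological invariant is the delicate point, and it is ultimately why Theorem \ref{thm:mainTopo} has been stated in the robust form above, tailored to accommodate precisely the deformations that appear in Simon-Smith min-max.
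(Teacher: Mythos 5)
Your strategy founders on the step you yourself flag as delicate: the claim that the ``two parallel copies of $\Sigma_0$ joined by $g+1$ thin necks'' family realizes, up to pinch-off deformation, the non-trivial $\Psi$ of Theorem \ref{thm:mainTopo}. Nothing in your construction obstructs the most obvious deformation: pinch all $g+1$ necks simultaneously (neck-pinch surgery is an allowed pinch-off move), uniformly over your parameter space of basepoints and neck profiles, leaving two disjoint spheres. That deforms your family into $\cS_0(S^3)$, so as described it is \emph{not} non-deformable into $\cS_{\leq g-1}(S^3)$, and Theorem \ref{thm:TopoMinMax} gives nothing. The non-triviality established in Theorem \ref{thm:mainTopo} is not an abstract $(2g+3)$-dimensionality count; it depends essentially on the family containing the $\RP^5$-family $\Phi_5$, which is a $5$-sweepout (so by Proposition \ref{prop:no5sweepoutGenus0} it cannot be pushed into genus $0$), together with the intersection-number and Grassmannian-homology arguments of \S\ref{sect:ProofTopo}. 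A family confined to a tubular neighborhood of a single sphere, parametrized only by neck positions and widths, has none of that structure, and trying to also force $\max_{x}\area(\Psi(x))\leq 2\sigma_1+\varepsilon$ for \emph{all} members is exactly what makes the needed sweepout structure unavailable in a general metric.

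The paper resolves this tension differently, and this is the content you are missing. First, Theorem \ref{thm:TopoMinMax} is proved with the \emph{improved} area bound (\ref{eq:genusGArea}): only the genus-$g$ members of the family need small area, so the genus-$0$ members (the bulk of the $5$-sweepout, whose areas are not controlled by $2\sigma_1$ after transport) cause no harm. Second, one keeps the original $\Psi$ of Theorem \ref{prop:Psi1} and only modifies it by continuously varying ambient diffeomorphisms: $\rho_\delta(a)$ squeezes the angle between the two intersecting spheres so that every nonzero-genus member becomes varifold-close to a multiplicity-two round sphere of the form $(x_1+a_2)+(x_2+a_1)=0$, and then a diffeomorphism $\varphi$ carries the round foliation $\{x_1+x_2+s=0\}$ onto the Haslhofer--Ketover optimal foliation of $(S^3,\bg_0)$ whose leaves have area $\leq\area(\Sigma)=\sigma_1$. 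Since $\Xi=\varphi\circ\rho_\delta\circ\Psi$ differs from $\Psi$ by ambient diffeomorphisms, the non-deformability is inherited, while all genus-$g$ members have area $<2\sigma_1+\bar\delta$. Your proposal omits both ingredients: the relaxation of the area requirement to genus-$g$ members only, and the global optimal sweepout of Haslhofer--Ketover (a local tubular-neighborhood foliation near $\Sigma_0$ does not suffice to control areas in a general positive-Ricci metric). Without them, either the topology or the area bound fails.
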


   \begin{figure}
        \centering
        \makebox[\textwidth][c]{\includegraphics[width=2.5in]{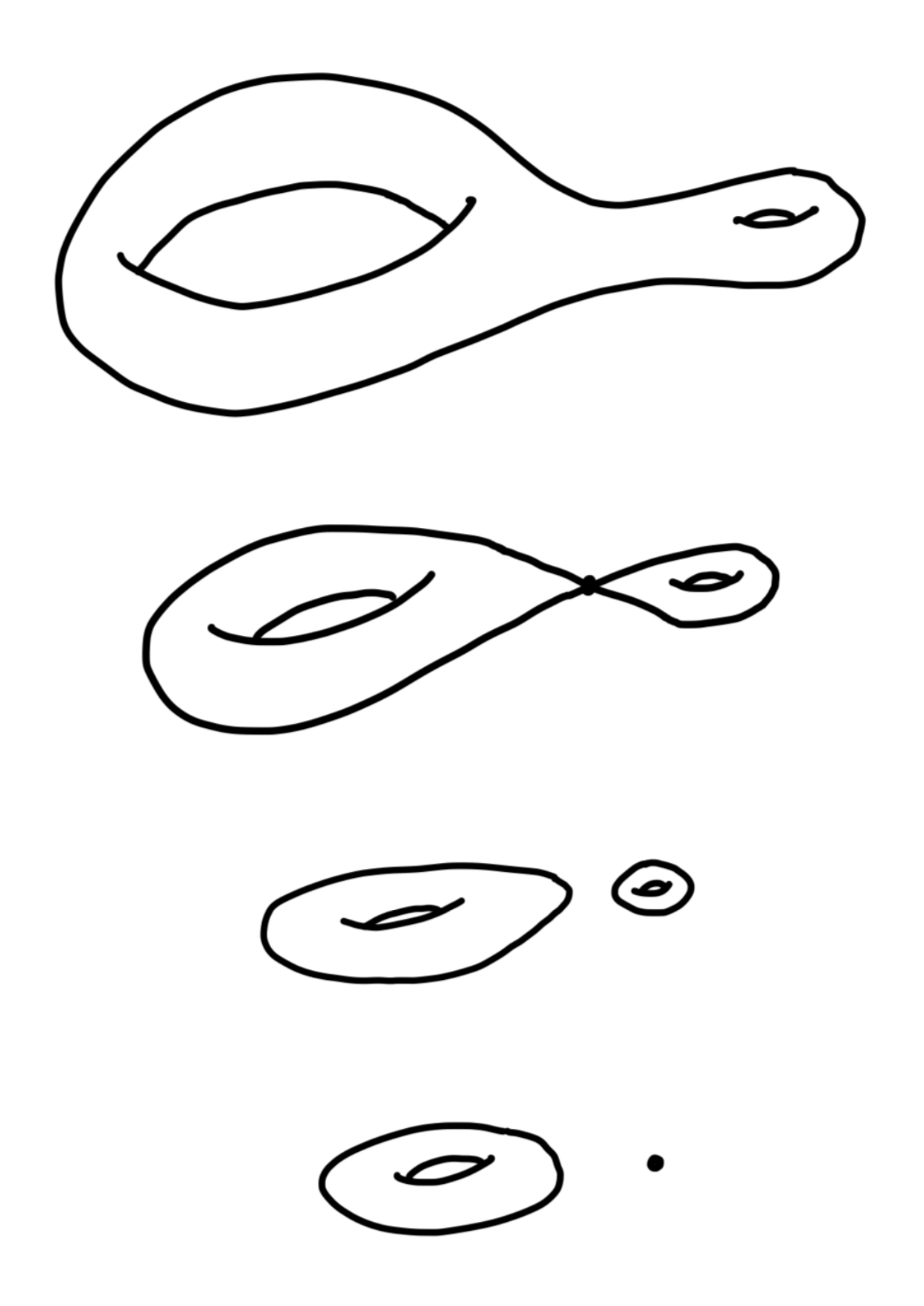}}
        \caption{This is an example of pinch-off process with two spacetime singularities. It has one neck-pinch surgery (the second picture shows a double cone), and one connected component shrunk to a point.}
        \label{fig:pinchOff}
    \end{figure}

We note that, in the case of the unit 3-sphere, the area bound in Theorem \ref{thm:mainMinimal} is $8\pi$. By Heller-Heller-Traizet's work \cite{HellerHellerTraizet2023AreaLawson}, the area of the Lawson minimal surfaces $\xi_{g,1}$ approaches $8\pi$ from below as $g\to\infty$. Thus,  assuming the well-known conjecture  that $\xi_{g,1}$ minimizes the area among embedded genus $g$ minimal surfaces (see  \cite{kusner1989comparison} by R. Kusner), the area bound in Theorem \ref{thm:mainMinimal} is sharp in the case of the unit 3-sphere.

Our motivation behind Theorem \ref{thm:mainMinimal} stems from S.-T. Yau's conjecture that every Riemannian 3-sphere has at least 4 embedded minimal spheres \cite{Yau82}, and also B. White's conjecture that every Riemannian 3-sphere has at least 5 embedded minimal tori \cite{White89}.  Yau's conjecture was solved in the case of bumpy metric and positive Ricci curvature by Wang-Zhou \cite{WangZhou23FourMinimalSpheres}. In \cite{chuLi2024fiveTori},   Y. Li and the author   proved  B. White's conjecture  in the case where the ambient metric has positive Ricci curvature. See also Xingzhe Li and Zhichao Wang's  work on the existence of  minimal tori \cite{LiWang2024NineTori}. Very recently,  Yangyang Li, Zhihan Wang, and the author also proved the existence of genus 2 minimal surfaces in 3-spheres with positive Ricci curvature \cite{ChuLiWang2025GenusTwoI}.

For more previous works on the construction of geodesics or minimal surfaces with controlled topological type, readers may refer to  \cite{LS47,Str84, GJ86,Grayson89, white1991space, Zho16, HK19, bettiolPiccione2023bifurcationsCliffordTorus, HK23, Ko23a, Ko23b,bettiolPiccione2024nonplanarMinSpheres}.  
During the preparation of this article, the author was  informed that D. Ketover also independently obtained results on the existence of minimal surfaces of arbitrary genus in 3-spheres of positive Ricci curvature.

\subsection{Further directions}
In view of Theorem \ref{thm:mainMinimal}, the following question naturally arises.
\begin{oqn}\label{oqn:ng}
    What is the optimal number $\mathfrak n_g(S^3)$ for which every 3-sphere of positive Ricci curvature must have at least $\mathfrak n_g(S^3)$ minimal surfaces of genus $g$?
\end{oqn} Note, $\mathfrak n_0(S^3)=4$ by   \cite{WangZhou23FourMinimalSpheres} and  $\frak n_1(S^3)\geq 5$ by \cite{chuLi2024fiveTori}. Obtaining an asymptotic formula for $\frak n_g(S^3)$ as $g\to\infty$ would already be very interesting. One can also ask an analogous  question with the positive Ricci curvature assumption removed, or replaced with the bumpy metric assumption. Of course, we may also consider such questions on arbitrary $3$-manifolds.

To obtain a lower bound for   $\frak n_g(S^3)$, it is expected that we should study, in a suitable sense, the relative cohomology ring structure of the pair $(\cS_{\leq g}(S^3),  \cS_{\leq g-1}(S^3))$ (see  \cite{LiWang2024NineTori} for the case $g=1$). In particular, the  following problem would be a crucial intermediate step:
\begin{oqn}
    Construct homotopically non-trivial smooth families of smooth genus $g$ surfaces in $S^3$.
\end{oqn}
 On the topological side, we can also ask, if a map $\Phi$ into $\cS_{\leq g}(S^3)$ cannot be deformed via pinch-off processes to become a map into $\cS_{\leq g-1}(S^3)$, what is the minimum possible dimension of the domain of $\Phi$? The author conjectures that the number $2g-3$, as provided by Theorem \ref{thm:mainTopo}, is optimal.

As for upper bounds for $\frak n_g(S^3)$, the situation is even more obscure. Is it possible to obtain the {\it exact} value of $\frak n_g(S^3)$ in terms of topological information on the pair $(\cS_{\leq g}(S^3),\cS_{\leq g-1}(S^3))$? Here is another way to view this problem. In finite dimensional Morse theory,  any Morse function $f:M\to\R$ must have at least $b_k(M)$ critical points of Morse index $k$, where $b_k(M)$ is the $k$-th Betti number of $M$. If a function $f$ has {\it exactly} $b_k(M)$ critical points of Morse index $k$, then $f$ is called a {\it perfect} Morse function. So, under this analogy,  here we are really just asking: 
\begin{oqn}\label{oqn:perfect}
    Assuming $S^3$ has positive Ricci curvature (or a   bumpy metric with positive Ricci curvature), is the area functional on the pair $(\cS_{\leq g}(S^3),\cS_{\leq g-1}(S^3))$ in some sense ``perfect"?
\end{oqn}
 Ultimately, this  could be related  to classifying  minimal surfaces in the {\it  standard $3$-sphere}. Namely, one possible strategy for classification  is to rely on the topological structure of $\cS_{\leq g}(S^3)$.  Then Question \ref{oqn:ng} is related to the existence side, while Question \ref{oqn:perfect}   the uniqueness side.

\subsection{Sketch of proof for Theorem \ref{thm:mainTopo}} 
Fix $g\geq 1$. Let $S^3\subset\R^4$ be the unit sphere, and $B^n$ the closed unit $n$-ball. We are going to construct a map $$\Psi:\RP^5\x B^{2g-2}\to\cS_{\leq g}(S^3)$$ that cannot be deformed via pinch-off processes to become a map into $\cS_{\leq g-1}(S^3)$ 

\subsubsection{A $(2g+3)$-parameter family $\Psi$}\label{sect:introPsi}
  We first define in $S^3$ a 5-parameter family $\Phi_5$ of surfaces. For each $a=[a_0:a_1:...:a_5]\in\RP^5$, we let  $\Phi_5(a)$ be the zero set (in $S^3$) given by
\begin{equation}\label{eq:introPhi5}
    a_0+a_1x_1+a_2x_2+a_3x_3+a_4x_4+a_5x_1x_2=0.
\end{equation}
Crucially, this is a {\it $5$-sweepout} in the sense of Almgren-Pitts min-max theory.
However, this   family is  not  allowed in our setting,  as it contains members with infinitely many singularities. Namely, for any 
$$a\in A_{\sing}:=\{[a_1a_2:a_1:a_2:0:0:1]:a_1^2+a_2^2<1\},$$
$\Phi_5(a)$ is given by $(x_1+a_2)(x_2+a_1)=0$, which is a union of two (not necessarily equatorial)  spheres  intersecting perpendicularly. So we need to desingularize each of them, by replacing a neighborhood of the circle of intersection $C(a_1,a_2)$ with $\leq g+1$ handles. In fact, for each $a\in A_{\sing}$ we will find a $B^{2g+1}$-family of ways to desingularize $\Phi_5(a)$ (though a 3-parameter subfamily of those desingularizations already appeared in $\Phi_5$: e.g. $\{x_1x_2+a_0+a_3x_3+a_4x_4=0\}$ desingularizes $\{x_1x_2=0\}$ whenever $a_0,a_3,a_4$ are small). This gives us a map $$\Psi:\RP^5\x B^{2g-2}\to\cS_{\leq g}(S^3).$$
Geometrically, the $2g+1$ parameters come from varying the sizes and locations of the handles inserted, which may pinch or merge with each other: See Figure \ref{fig:oneSurf}. 

Let us more precisely define    $\Psi$. Let $O_1$ denote the point $[0:0:...:0:1]\in\RP^5$.  We first explain how to desingularize $\Phi_5(O_1)$, which is defined by $x_1x_2=0$. Since we only need to desingularize the  circle of intersection $\{x_1=x_2=0\}$, let us focus on a small $\delta$-tabular neighborhood $N_1$ of it. Naturally $N_1\cong \bD_\delta\x S^1$, where $\bD_\delta\subset \R^2$ denotes a tiny disc of radius $\delta$, and $S^1:=\R/2\pi\Z$ represents the direction given by the circle of intersection. Thus, on $N_1$, we may use the coordinates  $(x_1,x_2)$ for $\bD_\delta$ and $\alpha$ for $S^1$.  Note the coordinates $x_1,x_2$ here are just the same as the original $x_1,x_2$ for $\R^4$.

For any $b=(b_2,b'_2,...,b_g,b'_g)\in B^{2g-2}$, we define the trigonometric polynomial$$F_b(\alpha):=b_2\cos 2\alpha+b'_2\sin 2\alpha+...+b_g\cos g\alpha+b'_g\sin g\alpha+(1-\|b\|)\cos(g+1)\alpha,$$for $\alpha\in S^1:=\R/2\pi\Z$. Let us first define the member $\Psi(O_1,b)$ within the thin tube $N_1$: We define it to be the zero set given by 
\begin{equation}\label{eq:introX1X2Fb}
    x_1x_2+\epsilon F_b(\alpha)=0
\end{equation}under the coordinate $(x_1,x_2,\alpha)$, where $\epsilon>0$ is some small constant. Geometrically, it looks like a tower of handles as shown in  Figure \ref{fig:oneSurf}. Now, to obtain the whole surface $\Psi(O_1,b)$, we just remove the part $\Phi_5(O_1)\cap N_1$ from $\Phi_5(O_1)$, and glue in this tower of handles. 
 
Note, in Figure \ref{fig:oneSurf}, the intersection points of the surface with the vertical axis (representing $S^1$) are given by the roots of $F_b(\alpha)=0$. Since the trigonometric polynomial $F_b$ is of degree $\leq g+1$, there are at most $2g+2$ intersection points, which easily implies that $\Psi(O_1,b)$ has genus $\leq g$ for every $b$. In fact, $\Psi(O_1,0)$ has genus exactly $g$, while $\Psi(O_1,b)$ has genus  $\leq g-1$ for any $b\in \partial B^{2g-2}$.

  \begin{figure}
        \centering
        \makebox[\textwidth][c]{\includegraphics[width=4in]{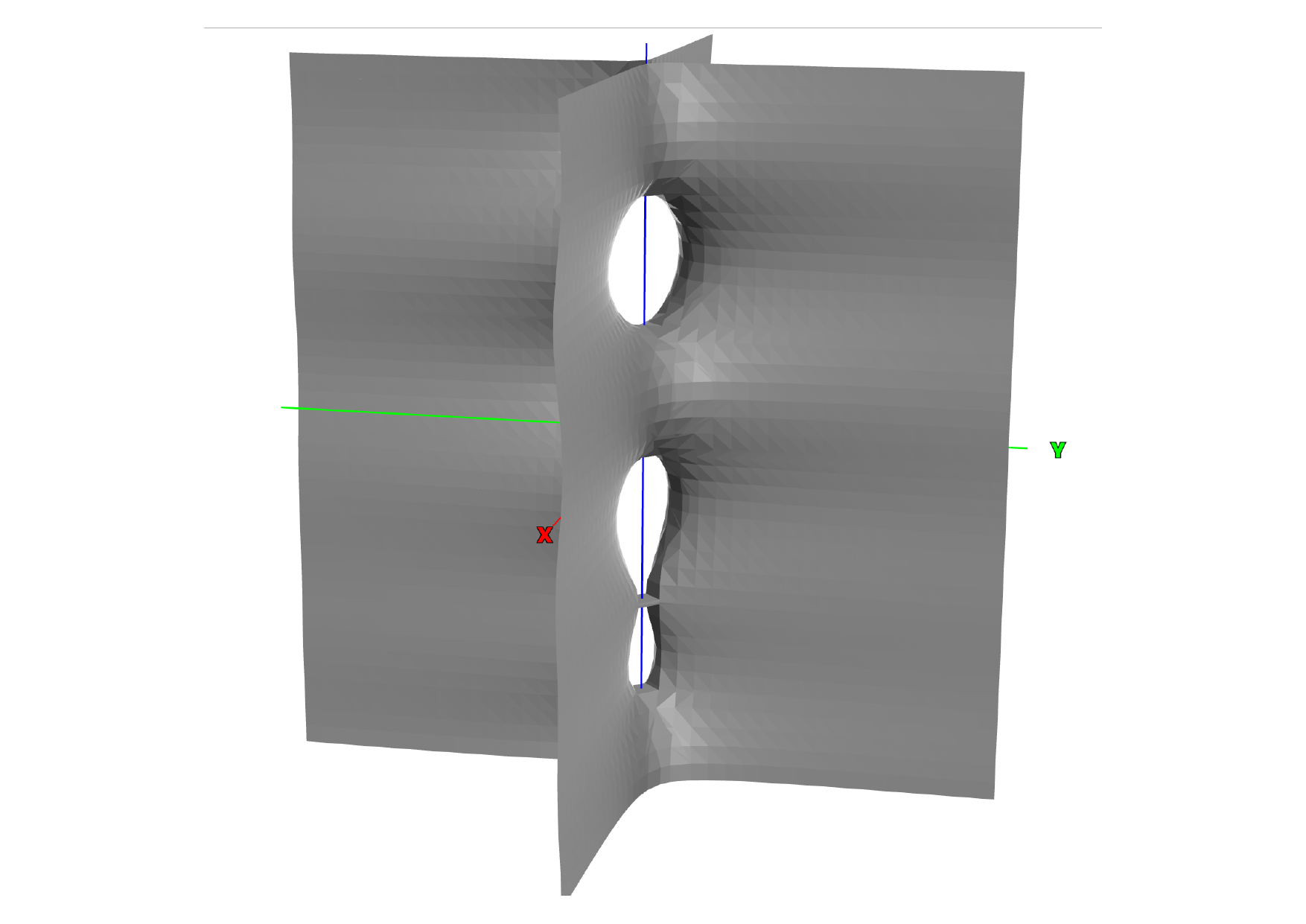}}
        \caption{This shows one  member of $\Psi$ which  desingularizes the zero set $(x_1+a_2)(x_2+a_1)=0$ using handles of different sizes and locations. The vertical axis represents the circle of intersection $C(a_1,a_2):=\{x_1+a_2=x_2+a_1=0\}$}
        \label{fig:oneSurf}
    \end{figure}

Now, we modify $\Phi_5(a)$ for any $a\in \RP^5$ close to $A_{\sing}$. Since changing $a_1,a_2$ essentially corresponds to translating the zero set in $x_1,x_2$-directions, let us just focus on those $a$ close to $O_1$ with  $a_1=a_2=0$: This set of $a$ lies in a small 3-ball centered at $O_1$. Namely, as $a$ moves from $O_1$ towards the boundary of this 3-ball, we just interpolate from the function (\ref{eq:introX1X2Fb}) to the one in (\ref{eq:introPhi5}), and take the zero sets.

\subsubsection{Three levels of interpolation arguments}\label{sect:threeLevel} The proof of Theorem \ref{thm:mainTopo} consists of three levels of interpolation arguments:
\begin{enumerate}
    \item To show that $\Psi$ cannot be deformed such that the area of all its members go to zero.
    \item To show that $\Psi$ cannot be deformed into $\cS_0{(S^3)}$.
    \item To show that $\Psi$ cannot be deformed into $\cS_{\leq g-1}{(S^3)}$.
\end{enumerate}

To achieve (1), we just need to note that the family $\Psi$ contains some $1$-parameter  subfamily $\Psi|_\gamma$  that is a {\it $1$-sweepout}, i.e. the surfaces in $\Psi|_\gamma$ sweep out the whole $S^3$ (an odd number of times). In min-max theory, this feature guarantees the family can produce at least one minimal surface \cite{Alm65,Pit81, Smith82,CD03}.

To achieve (2), we note that for every $b\in B^{2g-2}$ the subfamily  $\Psi|_{\RP^5\x\{b\}}$ is close to $\Phi_5$, and thus must be a 5-sweepout. But since   there does not exist 5-sweepout  consisted of solely genus 0 elements, $\Psi|_{\RP^5\x \{b\}}$ cannot be deformed into $\cS_0(S^3)$. On the min-max side, this feature allows one to produce some minimal surface of {\it non-zero genus} \cite{MN14,chuLi2024fiveTori}.

To achieve (3), we use the observation in the last paragraph, together with  some elementary topology, to show the following:   For any family $\Psi'$ obtained from $\Psi $ via pinch-off processes, there must  be some $(2g-2)$-dimensional subset $D $ in $ Y:=\RP^5\x B^{2g-2}$, with $\partial D\subset \partial Y$, such that:
\begin{itemize}
\item $[D]=[\{O_1\}\x B^{2g-2}]$  in $H_{2g-2}(Y,\partial Y;\Z_2)$. In particular,   $D$ and $\RP^5\x\{0\}$ have intersection number 1 (mod 2) in $Y$.
    \item All members of $\Psi'|_D$ have genus $\geq 1$
\end{itemize}
As a consequence, we can guarantee:
\begin{itemize}
    \item Since pinch-off process is genus-non-increasing, all members in the original family $\Psi|_D$ also have genus $\geq 1$
    \item 
Recall every member of $\Psi|_{\partial Y}$ has genus $\leq g-1$. So, every member of $\Psi|_{\partial D}$ must have genus in the range $[1,g-1]$.
\end{itemize}

On the other hand, we will examine the homology classes of members of $\Psi|_D$ and conclude, in fact, it is impossible to deform $\Psi|_D$ into $\Psi'|_D$ such  that all members of $\Psi'|_D$ have genus {\it in the range $[1,g-1]$}, for that would mean too many homology classes have pinched during the pinch-off processes. This shows $\Psi'|_D$ must still have some genus $g$ member, and so $\Psi'$ cannot be a map into $\cS_{\leq g-1}(S^3)$, as desired. 

Before we further explain this method,   let us describe the  heuristics by comparing with mean curvature flow,  which is topologically like a pinch-off process, as both processes are genus-non-increasing.

\subsubsection{A comparison with mean curvature flow}
In \cite{chuSun2023genus}, A. Sun and the author showed that for certain type of one-parameter family of initial conditions, as illustrated by Figure \ref{fig:mcf}, if we run mean curvature flow to all its members,  some member must develop a {\it genus one singularity}. The moral is, by an interpolation argument on the homology classes, we can guarantee that {\it some} member in the initial family must  behave in a certain desirable way under the flow. For example, in  \cite{chuSun2023genus}, we showed for some member, both the inward neck and outward neck  would pinch at the same time. As for the current paper, we run a high parameter interpolation argument to show that, for some member in $\Psi|_D$ (and thus $\Psi$), {\it no homology class can pinch}. Thus, $\Psi'|_D$ must still contain some member of genus $g$.

   \begin{figure}[h]
        \centering
        \makebox[\textwidth][c]{\includegraphics[width=\textwidth]{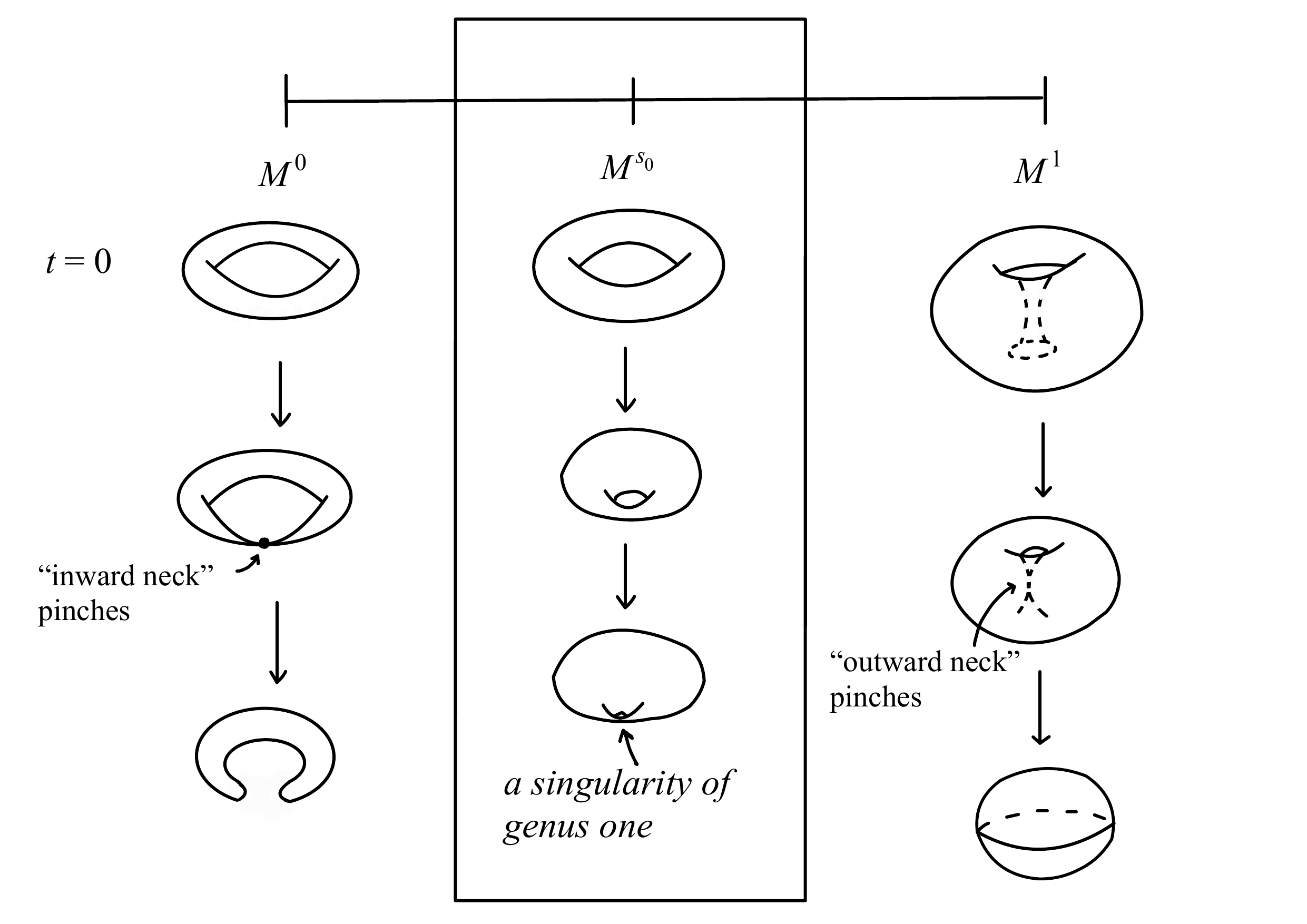}}
        \caption{The top row shows a one-parameter family of initial conditions $\{M^s\}_{s\in [0,1]}$ such that,  under mean curvature flow, $M^0$ will develop an inward neck-pinch, while $M^1$ an outward neck-pinch. Then there exists some $s_0\in [0,1]$ such that $M^{s_0}$ will develop a genus one singularity. }
        \label{fig:mcf}
    \end{figure}

\subsubsection{``Homological non-triviality" of $\Psi|_{\partial D}$}\label{sect:homoNonTrivial}
We will relate the homology groups $H_1(S^3\backslash \Psi(y))$ for different $y\in D$. Note, all homology groups will be in $\Z_2$-coefficients. First, let $D_g\subset D$ be the (open) subset of all $y\in D$ such that $\Psi(y)$ has genus exactly $g$ (which means, instead of being like  Figure \ref{fig:oneSurf}, the surface should intersect the vertical axis at $2g+2$ distinct points). In this case, $H_1(S^3\backslash \Psi(y))=\Z^g_2\oplus\Z^g_2$. On the other hand, for every $y$ on the boundary $ \partial D_g$, some handles  have pinched or merged with each other, so that   $\Psi(y)$ has genus in the range $[1,g-1]$ and, in particular, the size of $H_1(S^3\backslash \Psi(y))$ is strictly less than $2^{2g}$. Nonetheless, we will find a way relate  the groups  $H_1(S^3\backslash \Psi(y))$ for all $y\in\overline{D_g}=D_g\cup\partial D_g$, by  embedding each of them into some fixed $\Z^g_2\oplus\Z^g_2$ continuously in $y$. In other words, we will define a ``continuous" map $\frak f$ from $\overline{D_g}$ into the set $\Gr(\Z^g_2\oplus\Z^g_2)$ of all subgroups of $\Z^g_2\oplus\Z^g_2$, such that  for each $y\in \overline{D_g}$ the subgroup $\frak f(y)$ is isomorphic to $H_1(S^3\backslash \Psi(y))$.

In fact, if we restrict to the $(2g-3)$-cycle $\partial D_g\subset \overline{D_g}$, then by the fact that every member of $\Psi|_{\partial D_g}$ has genus in the range  $[1,g-1]$, we can ensure that  the image  $\frak f(\partial D_g)$ lies within the following  subset of $\Gr(\Z^g_2\oplus\Z^g_2)$:
$$\Gr^g[1,g-1]:= \{(A_1,A_2):A_1,A_2\subset \Z^g_2 \textrm{ are subgroups}, 1\leq \rank(I_\id|_{A_1\oplus A_2})\leq g-1\},$$
where $I_\id:\Z^g_2\oplus \Z^g_2\to\Z_2$ denotes the standard bilinear form given by the identity matrix. And for any $y\in \overline{D_g}$, the {\it linking number bilinear form} for homology classes in the inner region and the outer region of $\Psi(y)$ would, under the map $\frak f$, correspond exactly to  $I_\id$.

Here comes a crucial topological step: {\ul{\it In fact, the homology class $[\frak f|_{\partial D_g}]$ is non-zero in $H_{2g-3}(\Gr^g[1,g-1])$}} (Theorem \ref{thm:partialFSigma2}). Essentially, this boils down to the fact that $D$ and $\RP^5\x\{0\}$ have intersection number 1 in $Y$ (as mentioned  in \S \ref{sect:threeLevel}), and the way we constructed  $\Psi$. The proof of this fact is tricky but elementary. We will  comment  further on this  in \S \ref{sect:toyModel}.

Let us assume this fact. Then, recall that $\Psi$ can be deformed via pinch-off processes to  become the family $\Psi'$. Since pinch-off process is genus-non-increasing, the groups $H_1(S^3\backslash\Psi'(y))$ may be smaller than $H_1(S^3\backslash\Psi(y))$, meaning only certain {\it subgroup}
 of elements in  $H_1(S^3\backslash\Psi(y))$  survive the pinch-off process. Some classes might cease to exist due to some neck-pinches. Over on the $\Gr(\Z^g_2\oplus\Z^g_2)$ side, the pinch-off process has the effect of deforming the map $\frak f:\overline{D_g}\to \Gr(\Z^g_2\oplus\Z^g_2)$ to become some other map $ \frak f'$, such that each subgroup $\frak f(y)\subset \Z^g_2\oplus\Z^g_2$ gets collapsed into some small subgroup $ {\frak f}'(y)\subset \frak f(y)$. 
 
 Finally, we can prove that   some member of $ \Psi'|_{D_g}$ must still have genus $g$. If not, then every member of $ \Psi'|_{\overline{D_g}}$ has genus in the range $[1,g-1]$, which means ${\frak f'}$ maps into $\Gr^g[1,g-1]$. This contradicts the topological fact that  $[{\frak f'}|_{\partial D_g}]=[\frak f|_{\partial D_g}]\ne 0$ in $H_{2g-3}(\Gr^g[1,g-1])$  we mentioned   two paragraphs ago. In conclusion, $\Psi'$ cannot be a map into $\cS_{\leq g-1}$.

\subsubsection{Non-trivial topology in $\Gr^g[1,g-1]$}\label{sect:toyModel}
Let us give some evidence for the crucial topological step, that $[\frak f|_{\partial D_g}]$ is non-zero in $H_{2g-3}(\Gr^g[1,g-1])$, from the last section. Namely, we will focus on the case $g=2$, and show that   the space
$$\Gr^2[1]:=\{(A_1,A_2):A_1,A_2\subset \Z^2_2 \textrm{ are subgroups}, \rank(I_\id|_{A_1\oplus A_2})=1\}$$
indeed has some non-trivial topology.

As mentioned in \S \ref{sect:introPsi}, the genus 2 members in the family $\Psi:\RP^5\x B^2\to\cS_{\leq 2}$ should look like desingularizaions of two intersecting spheres, having handles with various sizes and locations. These handles  can  pinch or merge with each other, resulting in singular surfaces of genus 0 or 1. 

More precisely, let us look at the left side of Figure \ref{fig:gr}. By pinching some of the loops $a_i,b_i$, we can obtain singular genus 1 surfaces. In fact, they arise in pinching the loops in any one of the following 12 ways: (1) Pinch $a_1$, (2) pinch $a_1$ and $b_1$, (3) pinch $b_1$, (4) pinch  $b_1$ and $a_2$, (5) pinch $a_2$, ..., (12) pinch $b_3$ and $a_1$. 
As a remark, when we pinch a neck, we will not break it completely, but will instead keep the pinched neck as a singular point.

Let us now consider the   first homology groups (in $\Z_2$-coefficients) of the {\it complement regions} of these genus one surfaces with (exactly one) singularity. Observe that, pinching  necks  would destroy  homology classes. For example, if $\Sigma_1$ is obtained from pinching $a_1$ while $\Sigma_2$ is  obtained from pinching both $a_1$ and $b_1$, then $H_1(S^3\backslash \Sigma_2)$ would  be a {\it subgroup}  of $H_1(S^3\backslash \Sigma_1)$. In fact,  on the right side of Figure \ref{fig:gr}, we are exactly illustrating such inclusion of subgroups: Each of the 12 points represents the first homology group of the complement region of some singular genus 1 surface. The labels  indicates the loops pinched, and the arrows point to the smaller subgroups. 

At the same time, each of these 12 points   can be viewed as an element of $\Gr^2[1]$, with the standard bilinear form $I_\id:\Z^2_2\x\Z^2_2\to\Z_2$ corresponding to the linking number bilinear form for homology classes in the inner region and the outer region of a singular genus 1 surface. Now, the key observation is: Equipped with an appropriate topology,  $\Gr^2[1]$  is actually like\footnote{To be precise, it is a weakly homotopy equivalence.}   a circle, and by going through all 12 ways of  pinching the loops $\alpha_i,\beta_i$, we would travel through all 12 points in this circle, thereby detecting some non-trivial topology. 

  \begin{figure}
\centering
\makebox[\textwidth][c]{\includegraphics[width=0.8\textwidth]{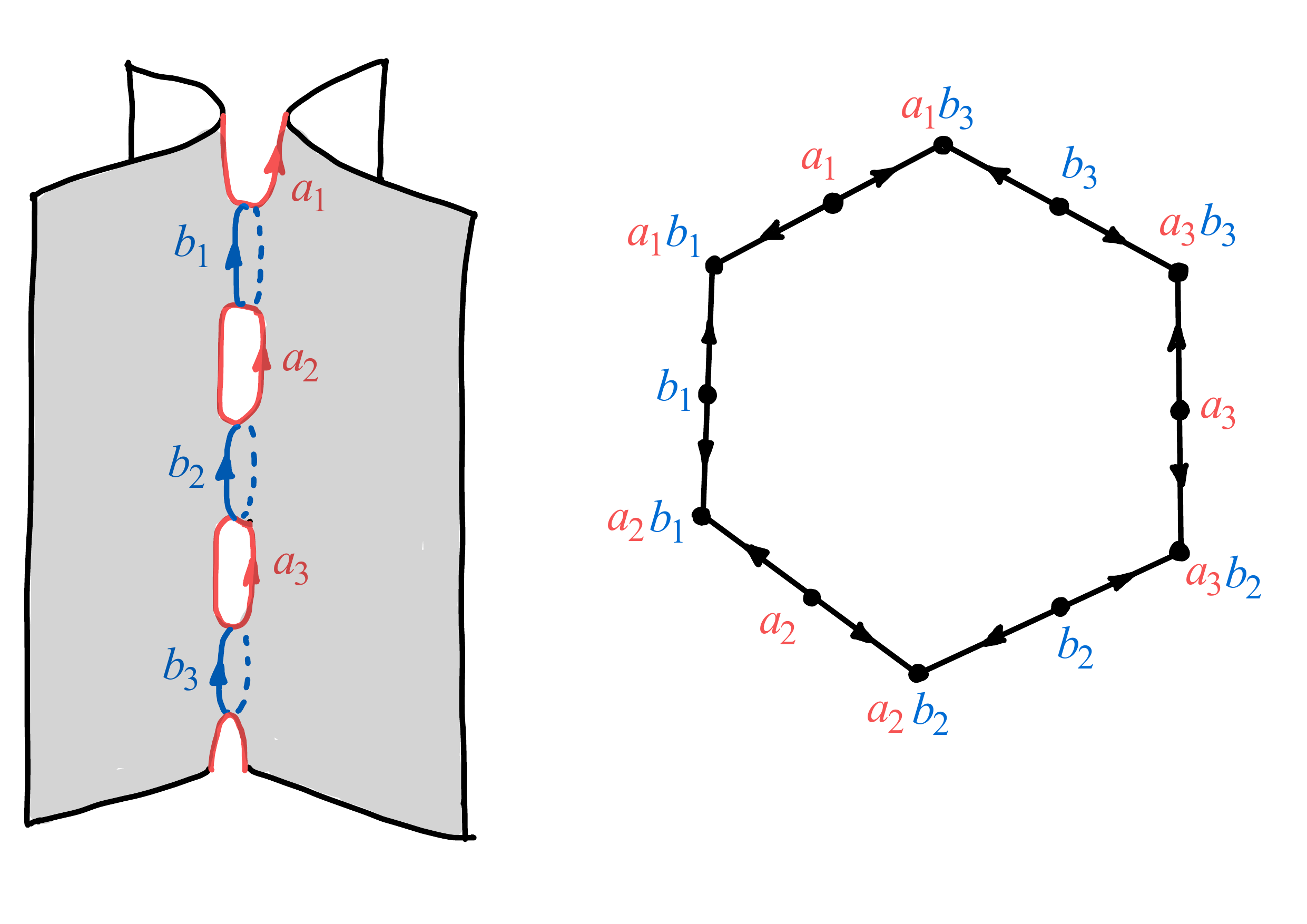}}
\caption{A genus 2 surface with loops for pinching, and an illustration of $\Gr^2[1]$.}
    \label{fig:gr}
\end{figure}

\subsection{Sketch of proof for Theorem \ref{thm:mainMinimal}} Let $(S^3,\bg_0)$ denote the given 3-sphere with positive Ricci curvature. By Theorem 1.3 of \cite{ChuLiWang2025GenusTwoI}, the fact that $\Psi:Y\to\cS_{\leq g}$ cannot be deformed into $\cS_{\leq g-1}$ immediately implies the existence of some genus $g$ minimal surface. Thus, to prove  Theorem \ref{thm:mainMinimal}, we just need to establish the area bound $2\sigma_1(S^3,\bg_0)$.

We will prove the following  slight improvement of \cite[Theorem 1.3]{ChuLiWang2025GenusTwoI}: The area of the genus $g$ minimal surface obtained is bounded from above by
\begin{equation}\label{eq:genusGAreaBound}
    \sup\{\area(\Psi(y)):y\in Y,\;\genus(\Psi(y))=g\}.
\end{equation} Then it suffices to modify $\Psi$ such that this upper bound is less than $ 2\sigma_2(S^3,\bg_0)+\epsilon$, for every $\epsilon>0$.

Recall that the  members in $\Psi$ with non-zero genus all are close to some union of two spheres intersecting at $90^\circ$, if viewed in the unit 3-sphere. Let us now deform the whole family $\Psi$ such that this angle is no longer $90^\circ$, but some small number $\theta>0$. Letting $\theta\to0$, we can ensure all members of the modified family $\tilde\Psi$ that have non-zero genus must be varifold-close to some {\it multiplicity two spheres orthogonal to the $x_1$-direction} (i.e. of the form $(x_1-c)^2=0$). Thus, under the unit metric on $S^3$, the value of (\ref{eq:genusGAreaBound}) is at most twice the area of the middle slice $x_1^2=0$, namely $8\pi$, plus $\epsilon$.

Now, we need to transport all these onto the sphere with positive Ricci curvature $(S^3,\bg_0)$. Let $\Sigma$ be a minimal sphere that achieves the first width $\sigma_1(S^3,\bg_0)$. We recall  the existence of an ``optimal 1-sweepout" associated to $\Sigma$, which is a foliation by mean convex spheres and mean concave spheres of maximum area $\area(\Sigma)$: This was constructed in Haslhofer-Ketover's work \cite{HK19}, using mean curvature flow with surgery. Now, let $\varphi$ be a diffeomorphism on $S^3$ that maps the foliation  $\{x_1=c\}_{-1<c<1}$ to this optimal foliation, and use $\varphi$ to transport the family $\tilde\Psi$ on the unit 3-sphere to $(S^3,\bg_0)$. Then, it can easily be ensured that  the value (\ref{eq:genusGAreaBound}) is less than $2\sigma_1(S^3,\bg_0)+\epsilon$, as desired.

\subsection{Organization}
In \S \ref{sect:prelim}, we state some preliminary results. In \S \ref{sect:homoDescent}, we introduce the notion of homology descent for pinch-off process. In \S \ref{sect:ProofTopo} and \ref{sect:mainProofMinimal}, we prove the two main theorems respectively, while leaving the proofs of some key steps to later sections.

In \S \ref{sect:ProofPsi1}, we prove Theorem \ref{prop:Psi1}. In \S \ref{sect:ProofCapProd}, we prove Lemma \ref{lem:capProd}. In \S \ref{sect:ProofDeformRetractF2}, we prove Proposition \ref{prop:DeformRetractF2}. In \S \ref{sect:ProofEmbedHomology}, we prove Proposition \ref{prop:embedHomology},  Finally, in \S \ref{sect:ProofPartialFSigma2}, we prove Theorem \ref{thm:partialFSigma2}, which is a crucial topological step for our whole argument.

\subsection*{Acknowledgment} The author would like to thank Yangyang Li and  Zhihan Wang for the fruitful collaborations on  related projects and their comments on this paper. He would also like to thank
Andy Putman, Daniel Stern, and Ao Sun for the insightful discussions.

This material is based upon work supported by the National Science Foundation under Grant No. DMS-1928930, while the  author was in residence at the Simons Laufer Mathematical Sciences Institute (formerly MSRI) in Berkeley, California, during the Fall semester of 2024. The   author is also partially supported by the AMS-Simons travel grant.

\section{Preliminaries}\label{sect:prelim}
\subsection{Surface with singularities}
 Let $M$ be a closed orientable 3-manifold equipped with a Riemannian metric. In this paper we will consider surfaces with finite area and {\it finitely many singularities}, which possibly include some isolated points too.

    \begin{defn} \label{def:punctate_surf} Let $M$ be a closed orientable 3-manifold.
        We let $\cS(M)$ be the set of all closed sets $S\subset M$ that satisfy the following.
        \begin{enumerate}
            \item The 2-dimensional Hausdorff measure $\cH^2(S)$ is finite.
            \item\label{item:punctateOrientable} There exists a finite set $P \subset S$ such that $S \setminus P$ is a smooth, orientable, embedded surface.
            \item\label{item:punctateSeparate} (Separating) Let $S_{\mathrm{iso}}$ be the set of isolated points of $S$. Then the complement of $S\backslash S_{\mathrm{iso}}$ is a disjoint union of two open regions of $M$, each having $S\backslash S_{\mathrm{iso}}$ as its topological boundary.
        \end{enumerate} 
        We may call any such closed set $S$ a {\it  punctate surface} in $M$.
    \end{defn}
Consideration of surfaces with singularities is essential for Theorem \ref{thm:TopoMinMax}: In certain steps of its proof, one needs to deform some {\it family} of surfaces into ones of lower genus, and sometimes this is impossible (even if we allow the deformation to be non-smooth at some points), unless singularities are allowed  (see  \cite[Example 2.6]{chuLi2024fiveTori}).

As proven in \cite[\S 2]{chuLi2024fiveTori}, every element  $S\in \cS(M)$ can be associated uniquely to an integral 2-current $[S]\in \bI_2(M;\Z_2)$ with $\partial [S]=0$. As a result, we can borrow the $\bF$-metric  on $\cZ_2(M;\Z_2)$ and impose it on $\cS(M)$. Furthermore, we can also define the notion of genus. See \cite[\S 2]{ChuLiWang2025GenusTwoI} regarding the well-definedness of this definition. Below, $B_r(P)$ denotes the union of open $r$-balls in $M$ with centers lying in $P$, while $\fg(\cdot)$ denotes the genus.
 
\begin{defn} \label{def:genus} Let $S\in\cS(M)$, $S_{\sing}$ be the set of its non-smooth points, and  $E\subset [0,\infty)$ be the set of $r>0$ such that $S \setminus B_r(S_{\sing} )$ is a smooth surface with  boundary. Then we define the {\it genus} of $S$ by
        \[
            \fg(S) := \lim_{r\in E,r\to 0} \fg(S \setminus B_{r}(S_{\sing}  ))\,.
        \]
    Moreover, we introduce the following notations:
    \begin{itemize}
        \item The set of $S\in\cS(M)$ with genus $g$ is denoted $\cS_{g}(M)$, or simply $\cS_{g}$.
        \item The set of $S\in\cS(M)$ with genus $\leq g$ is denoted $\cS_{\leq g}(M)$, or simply $\cS_{\leq g}$.
    \end{itemize} 
    \end{defn}

We now consider families of members in $\cS(M)$.  

   \begin{defn} \label{def:Simon_Smith_family}
        Let $X$ be a finite cubical or simplicial complex. 
        A map $\Phi: X \to \GS(M)$ is called a {\em Simon-Smith  family} with parameter space $X$ if the following hold.
        \begin{enumerate}[label=\normalfont(\arabic*)]
            \item \label{item:Hausdorff_cts} The  map $x\mapsto\cH^2(\Phi(x))$ is continuous. 
            \item \label{item:closedFamily} (Closedness) The family $\Phi$ is ``closed" in the sense that 
            $$\{(x,p)\in X\x M: p\in \Phi(x)\}$$
            is a closed subset of $X\x M$. 
            \item \label{item:SingPointsUpperBound} ($C^\infty$-continuity away from singularities) For each $x\in X$, we can choose a finite set $P_\Phi(x)\subset \Phi(x)$ such that:
            \begin{itemize}
                \item $\Phi(x)\backslash P_\Phi(x)$ is a smooth embedded orientable surface.  
                \item For any $x_0\in X$  and within any open set  $U\subset\subset M \backslash P_\Phi(x_0)$, $\Phi(x)\to \Phi(x_0)$ smoothly whenever $x\to x_0$.
                \item $\displaystyle  \sup_{x\in X} |P_\Phi(x)|<\infty.$
            \end{itemize}
        \end{enumerate}
    \end{defn}
  We  say that a Simon-Smith family $\Phi$ is of  {\it genus $\leq g$} if each member of $\Phi$ has genus $\leq g$.  We note that, every finite cubical complex is homeomorphic to a finite simplicial complex and vice versa (see \cite[\S 4]{BP02}).

  We now describe a special type of one-parameter family, called {\it pinch-off process}. See Figure \ref{fig:pinchOff} for an example.
      \begin{defn}\label{defn:pinch_off}
        In a closed orientable Riemannian 3-manifold $M$, a   collection  $\{\Gamma(t)\}_{t\in [a,b]}$ of elements in $\cS(M)$ is called a {\em    pinch-off process} if the following holds.  Let us view the parameter space $[a, b]$   as a time interval. Then
         there exist finitely many spacetime points $(t_1,p_1),\cdots,(t_n,p_n)$ in $(a,b]\x M$, with each  $p_i\in \Gamma(t_i)$, such that  each $(t,p)\in[a,b]\x M$ is of one of the following types.

        If $(t,p)$ is not equal to any $(t_i,p_i)$, then:
        \begin{enumerate}
            \item  There exists a closed neighborhood $J\subset [a,b]$ around $t$ and a ball $U\subset M$ around $p$ such that  $\{\Gamma(t')\}_{t'\in J}$ deforms by isotopy within $U$.
        \end{enumerate}
If $(t,p)$ is equal to some $(t_i,p_i)$, then there exists a closed neighborhood $J\subset [a,b]$ around $t$ and a ball $U\subset M$ around  $p$ such that the family $\{\Gamma(t')\cap U\}_{t'\in J}$ of surfaces is of one of the following types: (Note it is possible that $t=b$.)
        \begin{enumerate}
        \setcounter{enumi}{1}
                \item {\it Surgery }:
                The initial surface deforms by isotopy for $t'<t$, but then at time $t$  becomes, up to diffeomorphism, a double cone with $p$ as the cone point: See the second picture in Figure \ref{fig:pinchOff}. Afterwards, it either remains a double cone or splits into two smooth discs: In either case it is allowed to deform by isotopy.
                \item {\it Shrinking}: 
                For each $t'\in J$, {\it $\Gamma(t')$ does not intersect $\partial U$.} Moreover, for the family
                $\{ \Gamma(t')\cap U\}_{t' \in J }$, the initial surface deforms by  isotopy in $U$ when $t'<t$, but then becomes just the point $p$ at time $t$, and this point moves smoothly after time $t.$
        \end{enumerate}
\end{defn}
    
\subsection{$p$-sweepout}\label{sect:pSweepout}
Let us also recall some notions from Almgren-Pitts min-max theory. Let $M$ be any closed Riemannian manifold eqipped with a Riemannian metric. Let $\Phi:X\to\cZ_2(M;\Z_2)$ be an $\bF$-continuous map, with $X$ being a  finite simplicial complex or cubical complex.  

By the Almgren isomorphism theorem \cite{Alm62} (see also \cite[\S 2.5]{LMN18}),  when equipped with the flat topology, $\Zc_{n}(M;\Z_2)$ is weakly homotopic equivalent to $\R\mathbb P^\infty$. Thus we can denote its cohomology ring by $\Z_2[\bar\lambda]$, where $\bar \lambda \in H^1(\Zc_{n}( M;\Z_2), \Z_2)$ is the generator.
        
    \begin{defn} 
        Let $\Pc_p$ be the set of all $\Fb$-continuous maps $\Phi:X\to \Zc_2(M;\Z_2)$, where $X$ is a finite simplicial complex or cubical complex, such that the pullback $\Phi^*(\bar\lambda^p)\ne 0$. Elements of $\Pc_p$ are called {\it $p$-sweepouts.}
    \end{defn} 
Let us also recall a fact from    \cite[\S 3]{chuLi2024fiveTori} (see also \cite[\S 2]{ChuLiWang2025GenusTwoI}), which is based on the fact that the  Almgren-Pitts 5-width of the standard 3-sphere is $2\pi^2$ \cite{MN14,Nur16}.
\begin{prop}\label{prop:no5sweepoutGenus0}
No Simon-Smith family of genus $0$ in $S^3$ can be a $5$-sweepout.
\end{prop}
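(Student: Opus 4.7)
The plan is to derive a contradiction by applying Simon-Smith min-max to $\Phi$ and exploiting the fact that $\omega_5(S^3) = 2\pi^2$ is strictly larger than the area $4\pi$ of an equator.

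Suppose, for contradiction, that $\Phi \colon X \to \cS_{\leq 0}(S^3)$ is a Simon-Smith $5$-sweepout. Since an $\bF$-continuous family is in particular continuous into $\cZ_2(S^3;\Z_2)$ in the flat topology, $\Phi$ remains a $5$-sweepout in the Almgren-Pitts sense. Consequently the Simon-Smith width
\[
L(\Phi) := \inf_{\Phi' \sim \Phi} \sup_{x \in X} \cH^2(\Phi'(x)),
\]
with the infimum taken over Simon-Smith families $\Phi'$ in $\cS_{\leq 0}(S^3)$ homotopic to $\Phi$, satisfies $L(\Phi) \geq \omega_5(S^3) = 2\pi^2$.

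Next I would apply the Simon-Smith min-max theorem \cite{Smith82, CD03, Ket19} to the homotopy class of $\Phi$: it yields a stationary integral varifold $V$ in $S^3$ of mass $L(\Phi)$ whose support is a smooth embedded minimal surface, and the Simon-Smith genus bound forces every component of that support to have genus $0$. The only orientable embedded minimal $2$-spheres in the round $3$-sphere are the equators, each of area $4\pi$; by Frankel's theorem (valid since $\operatorname{Ric}_{S^3} > 0$) any two minimal surfaces in $S^3$ must meet, so the support consists of a single equator $\Sigma_0$. Hence $V = m[\Sigma_0]$ for some integer $m \geq 1$, and $L(\Phi) = 4\pi m \geq 2\pi^2$ forces $m \geq 2$.

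It remains to exclude $m \geq 2$, which is the main and most delicate step. I would carry this out by a perturbation argument, since the round metric is not bumpy: choose bumpy positive-Ricci metrics $g_\epsilon \to g_{\mathrm{round}}$, apply Simon-Smith min-max to $\Phi$ in $(S^3, g_\epsilon)$, and invoke the multiplicity-one conclusion for min-max $2$-spheres in $3$-manifolds of positive Ricci curvature (as used in \cite{WangZhou23FourMinimalSpheres, ChuLiWang2025GenusTwoI}, building on \cite{Zho20}). This yields, for each small $\epsilon$, a single $g_\epsilon$-minimal $2$-sphere $\Sigma_\epsilon$ with area equal to $L_{g_\epsilon}(\Phi) \geq \omega_5(g_\epsilon)$. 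Passing to a subsequence and using continuity of the $5$-width under metric convergence gives $\omega_5(g_\epsilon) \to 2\pi^2$, while $\Sigma_\epsilon$ subconverges to a minimal $2$-sphere in $(S^3, g_{\mathrm{round}})$, necessarily an equator of area $4\pi$; this produces the contradiction $4\pi \geq 2\pi^2$.
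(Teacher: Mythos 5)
Your proposal is correct and follows essentially the same route as the paper, which does not reprove this statement but quotes it from \cite{chuLi2024fiveTori} (see also \cite{ChuLiWang2025GenusTwoI}); that argument rests on exactly the ingredients you use, namely $\omega_5(S^3)=2\pi^2$, Simon--Smith min-max with the genus bound, the multiplicity-one theorem applied after perturbing to bumpy positive-Ricci metrics, and the fact that the only embedded minimal spheres in the round $S^3$ are equators of area $4\pi<2\pi^2$. Your opening reduction in the round metric (getting $V=m[\Sigma_0]$ with $m\geq 2$) becomes superfluous once the perturbation argument is run, but it does no harm.
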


Finally, we introduce the first Simon-Smith width.
\begin{defn}
Given any Riemannian 3-sphere $(S^3,\bg)$, denote by $\cP$   the set of all 1-sweepouts by smooth embedded spheres that are continuous in the smooth topology. Then we define the  {\it first Simon-Smith width} (also called the {\it first spherical area width}) of $(S^3,\bg)$ by
$$\sigma_1(S^3,\bg):=\inf_{\Phi\in \cP}\max_{x\in \dmn(\Phi)}\area(\Phi(x)).$$
\end{defn}

\begin{rmk}\label{rmk:firstWidth} The set of embedded minimal spheres in any 3-manifold of positive Ricci curvature  is  compact under the smooth topology, by Choi-Schoen compactness \cite{ChoiSchoen1985compactness}.
Hence, if $(S^3,\bg)$ has positive Ricci curvature, then by Simon-Smith min-max theory \cite{Smith82,CD03}, together with Theorem 1.8 of Haslhofer-Ketover's work \cite{HK19} (which gives the existence of an optimal 1-sweepout by smooth spheres), it can be easily shown that  there is some embedded minimal sphere $\Sigma$ of the least area,  such that $\area(\Sigma)=\sigma_1(S^3,\bg)$.
\end{rmk}

\subsection{From topology in $\cS(M)$ to minimal surfaces}
First we introduce a definition.
\begin{defn}\label{defn:deformViaPinchoff}
    Let $\Phi,\Phi':X\to\cS(M)$   be Simon-Smith families. Suppose we have a Simon-Smith family $H:[0,1]\x X\to \cS (M)$ such that:
    \begin{itemize}
        \item  $H(0, \cdot)=\Phi $
        and $H(1, \cdot)=\Phi'$.
        \item For each $x\in X$, $t\mapsto H(t,x)$ is a pinch-off process.
    \end{itemize}
    Then we say  $H$ is a {\it deformation via pinch-off processes} from $\Phi$ to $\Phi'$.
\end{defn}

We now prove a slighlt variant of  \cite[Theorem 1.3]{ChuLiWang2025GenusTwoI} by Chu-Li-Wang, by improving the area upper bound therein. This result  builds on numerous previous important and foundational results in min-max theory \cite{Smith82, CD03, DeLellisPellandini10, Ket19, MN21, Zho20, WangZhou23FourMinimalSpheres}. Crucially, we need the multiplicity one theorem in the Simon-Smith setting by Wang-Zhou \cite{WangZhou23FourMinimalSpheres}.

\begin{thm}\label{thm:TopoMinMax}
    Let $(M,\bg)$ be a closed orientable Riemannian $3$-manifold with positive Ricci curvature, and $g$ be a positive integer. Suppose there exists some Simon-Smith family
    $$\Phi:X\to \cS_{\leq g}(M)$$ that cannot be ``deformed via pinch-off processes"   to become a map into $\cS_{\leq g-1}(M)$. 
    Then $M$ contains some orientable, embedded  minimal surface of genus $g$ with  area at most \begin{equation}\label{eq:genusGArea}
        \sup\{\area(\Psi(x)):x\in X,\fg(\Psi(x))=g\}.
    \end{equation}
\end{thm}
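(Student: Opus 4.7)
The theorem sharpens [ChuLiWang2025GenusTwoI, Theorem 1.3] only by replacing the area bound $\sup_x \area(\Psi(x))$ with the smaller quantity $L := \sup\{\area(\Psi(x)) : \fg(\Psi(x)) = g\}$. The original proof produces the minimal surface at a min-max width equal to the infimum of $\sup_x \area(\Phi'(x))$ taken over all Simon-Smith families $\Phi'$ obtained from $\Psi$ by a deformation via pinch-off processes. My plan is to bound this min-max width by $L$, i.e., to construct, for each $\epsilon > 0$, a pinch-off deformation of $\Psi$ to a Simon-Smith family $\tilde\Psi$ with $\sup_x \area(\tilde\Psi(x)) \leq L + \epsilon$. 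Since pinch-off equivalence is transitive, $\tilde\Psi$ inherits the hypothesis, and the original theorem applied to $\tilde\Psi$ bounds the min-max width by $L + \epsilon$; sending $\epsilon \to 0$ gives the desired bound.

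\textbf{Key steps.} Because Simon-Smith convergence (smooth convergence off a finite set of points) can destroy but never create topology, the function $x \mapsto \fg(\Psi(x))$ is lower semi-continuous on $X$; hence $V := \{x \in X : \fg(\Psi(x)) = g\}$ is open. By continuity of area, $\area(\Psi(x)) \leq L$ for every $x \in \overline V$. It follows that the open superlevel set $W_\epsilon := \{x : \area(\Psi(x)) > L + \epsilon\}$ has $\overline{W_\epsilon} \cap \overline V = \emptyset$, so every $\Psi(x)$ with $x \in \overline{W_\epsilon}$ has $\fg(\Psi(x)) < g$. Each such surface admits a pinch-off process reducing its area to any desired small value, by iterated neck-pinch surgeries followed by shrinking each resulting connected component to a point. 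Fix a continuous cutoff $\eta : X \to [0,1]$ that vanishes near $X \setminus W_\epsilon$ and is sufficiently large on the interior of $W_\epsilon$ to force the $\eta(x)$-shrunken surface to have area at most $L + \epsilon$. Define $\tilde\Psi(x) := \Psi(x)$ for $x \notin W_\epsilon$, and $\tilde\Psi(x) :=$ the $\eta(x)$-shrunken version of $\Psi(x)$ for $x \in W_\epsilon$. This produces a pinch-off deformation $H : [0,1] \times X \to \cS(M)$ from $\Psi$ to $\tilde\Psi$, with $\sup_x \area(\tilde\Psi(x)) \leq L + \epsilon$, completing the argument.

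\textbf{Main obstacle.} The main technical challenge is producing the pinch-off deformation continuously in the parameter $x$ across all of $\overline{W_\epsilon}$, where the connected-component structure of $\Psi(x)$ may vary. The cutoff $\eta$ handles continuity across $\partial W_\epsilon$, since the deformation there is essentially the identity and matches $\Psi$ on $X \setminus W_\epsilon$; the harder aspect is patching local shrinking prescriptions in the interior of $W_\epsilon$, where neck-pinch and component-shrinking choices must be coordinated. I would cover $\overline{W_\epsilon}$ by finitely many open sets on which $\Psi$ admits a uniform local model (exploiting the smoothness of Simon-Smith families away from the uniformly finite singular set guaranteed by Definition \ref{def:Simon_Smith_family}\ref{item:SingPointsUpperBound}), perform model neck-pinch and shrinking operations in local Euclidean charts, and patch these together via a subordinate partition of unity. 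Such interpolation techniques are standard in the Simon-Smith framework, but the presence of genus-decreasing surgeries on parametrized families demands careful bookkeeping to ensure $H$ itself is a Simon-Smith family and that each slice $t \mapsto H(t,x)$ is a valid pinch-off process in the sense of Definition \ref{defn:pinch_off}.
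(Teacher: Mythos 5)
Your overall skeleton (modify the family, then cite \cite[Theorem 1.3]{ChuLiWang2025GenusTwoI} as a black box) is reasonable, but the construction of $\tilde\Psi$ has a genuine gap, and it is not the local patching issue you flag as the "main obstacle." A deformation via pinch-off processes is in particular a continuous homotopy of the family in $\cZ_2(M;\Z_2)$ (area-continuous and closed, by Definitions \ref{def:Simon_Smith_family} and \ref{defn:pinch_off}), so it preserves sweepout-type homotopy invariants, and min-max widths therefore obstruct pushing areas down. Your $\tilde\Psi$ agrees with $\Psi$ off $W_\epsilon$, so over $\overline{W_\epsilon}$ it would be homotopic to $\Psi|_{\overline{W_\epsilon}}$ relative to a neighborhood of $\partial W_\epsilon$, with maximal area $\leq L+\epsilon$. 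But the relative width of the pair $\bigl(\Psi|_{\overline{W_\epsilon}},\Psi|_{\partial W_\epsilon}\bigr)$ can be strictly larger than $L+\epsilon$: the hypothesis of the theorem constrains genus, not area, and nothing prevents the genus $\leq g-1$ part of the family from carrying a (relative) sweepout of $M$ whose width exceeds $L+\epsilon$ (for instance a $1$-sweepout by large spheres, while all genus $g$ members are small). So "each surface admits a pinch-off process reducing its area, now coordinate the choices with a partition of unity" fails for a global variational reason; the shrinkability of individual members says nothing about the family rel its boundary values, and the object $\tilde\Psi$ you need may simply not exist.

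The paper's proof never lowers the area of the high-area members. It chooses a subcomplex $Z$ whose interior contains $X_g:=\{x:\fg(\Psi(x))=g\}$ and on which $\area<L+2\epsilon/3$ (using continuity of area, much as in your semicontinuity observation), and applies \cite[Theorem 1.3]{ChuLiWang2025GenusTwoI} to the restricted family $\Phi|_Z$, so that the area bound comes from $\max_Z\area$ rather than $\max_X\area$. If there were no genus $g$ minimal surface of area $\leq L+\epsilon$, that theorem would give a deformation via pinch-off processes of $\Phi|_Z$ into $\cS_{\leq g-1}$; this deformation is then cut off in the time variable by a function $\eta$ vanishing near $\partial Z$ and extended by the constant (in $t$) family outside $Z$. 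The endpoint lands in $\cS_{\leq g-1}$ because outside a smaller subcomplex $Z'$ the genus of $\Phi$ is already $\leq g-1$ and pinch-off is genus-non-increasing, contradicting the hypothesis on $\Phi$. If you wish to keep your viewpoint, the correct "modification" is restriction to a neighborhood of the genus $g$ set followed by this cutoff-extension of the resulting deformation, not an area reduction over the rest of $X$.
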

\begin{proof}
Define $L$ to be the value (\ref{eq:genusGArea}).
The statement of \cite[Theorem 1.3]{ChuLiWang2025GenusTwoI} is the same as this theorem, except that  the area bound there is weaker: It was $\max_{x\in X}\area(\Psi(x))$, instead of $L$. So our task here is to merely improve the area bound.

It suffices to prove this claim: For every $\epsilon>0$, there  exists an orientable embedded minimal surface of genus $g$ and area $\leq L+\epsilon$. After that,  Theorem \ref{thm:TopoMinMax} follows by taking $\epsilon\to 0$ and the Choi-Schoen compactness \cite{ChoiSchoen1985compactness}.  

Suppose by contradiction there exists some $\epsilon>0$ that violates this claim. Define the set $$X_g:=\{x\in X:\fg(\Psi(x))=g\}.$$ 
Since the function $x \mapsto\area(\Phi(x))$ is continuous, we can find some subcomplex $Z'$ of $X$ (after refinement) such that:
\begin{itemize}
    \item  $X_g$ is contained in the interior $\inte(Z')$ of $Z'$ (so that $X_g\cap\overline{X\backslash Z'}=\emptyset$).
    \item For every $x\in Z'$, $ \area(\Psi(x))<L+\epsilon/3$.
\end{itemize}
Now there are two cases: (1) $Z'$ is a union of connected components of $X$ (so that the intersection $Z'\cap\overline{X\backslash Z'}$ is empty), or (2) otherwise.

For Case 1, it is easy to derive a contradiction. Namely,
since by assumption there is no minimal surface of genus $g$ and area $\leq L+\epsilon$,  \cite[Theorem 1.3]{ChuLiWang2025GenusTwoI} immediate says there is a deformation via pinch-off processes from $\Phi|_{Z'}$ to some map into $\cS_{\leq g-1}.$ Then by the definition of Case 1, we can directly use this deformation to deform $\Phi|_{Z'}$, while fixing $\Phi|_{\overline{X\backslash Z'}}$, to show that the whole family $\Phi$ can be deformed via pinch-off processes to become a map into $\cS_{\leq g-1}$, contradicting our assumption on $\Psi$.

For Case 2, the argument to derive a contradiction is entirely analogous: We just need to perform some cut-off near the boundary of $Z'$. Namely, we choose some subcomplex $Z\subset X$ (after refinement) containing $Z'$ such that:
\begin{itemize}
    \item For every $x\in Z$, $ \area(\Psi(x))<L+2\epsilon/3$.
    \item  There exists a smooth function $\eta:X\to [0,1]$, such that $\eta=0$ on $\overline{X\backslash Z}$ and $\eta=1$ on $Z'$.
\end{itemize}
Now, since by assumption there is no minimal surface of genus $g$ and area $\leq L+\epsilon$, \cite[Theorem 1.3]{ChuLiWang2025GenusTwoI} implies that $\Phi|_Z$ can be deformed via pinch-off processes to become a map into $\cS_{\leq g-1}$. This means there exists some Simon-Smith family $$H:[0,1]\x Z\to\cS_{\leq g}$$ such that $H(0,\cdot)=\Phi|_Z$,  $H(1,\cdot)$ maps into $\cS_{\leq g-1}$, and $H(\cdot,x)$ is a pinch-off process for each $x$.

Now, let us construct a deformation via pinch-off processes from $\Phi$ to a map into $\cS_{\leq g-1}$. Namely, we define $$G:[0,1]\x X\to\cS_{\leq g}$$ by 
$G(t,x):=\Phi(x)$ for all $t$
if $x\in \overline{X\backslash Z}$, 
and  $G(t,x):= H(\eta(t),x)$ if $x\in Z$. We claim that $G(1,\cdot)$  maps into $\cS_{\leq g-1}$. Indeed, on $Z'$, this is true because $H(1,\cdot)|_{Z'}$ maps into $\cS_{\leq g-1}$. And on $\overline{X\backslash Z'}$,  since $\Phi|_{\overline{X\backslash Z'}}$ maps into $\cS_{\leq g-1}$ (by the first bullet point in the definition of $Z'$ above) and pinch-off process is genus-non-increasing, we also know $G(1,\cdot)|_{\overline{X\backslash Z'}}$ maps into $\cS_{\leq g-1}$.

In summary $G$ is a deformation via pinch-off processes from $\Phi$ to a map into $\cS_{\leq g-1}$. This contradicts the assumption on $\Phi$ in Theorem \ref{thm:TopoMinMax}.\end{proof}

We will also need the following  proposition on interpolations from \cite[\S 2]{ChuLiWang2025GenusTwoI}.

\begin{prop}\label{prop:pinchOffg0g1}
    Given a closed orientable Riemannian $3$-manifold $(M, {\mathbf g})$,  and integers $0\leq g_0<g_1$, suppose that $\Phi:X\to\GS(M)$ is a Simon-Smith family of genus $\leq g_1$ with $X$ a cubical complex. Let $K$ be an arbitrary compact subset in $X$, and assume $\Phi|_K$ maps into $\cS_{\leq g_0}$.
    
    Then there exists a subcomplex $Z\subset X$, whose interior contains $K$, of the following property: $\Phi|_Z$ can be deformed via pinch-off processes to become some map into $\cS_{\leq g_0}$.
\end{prop}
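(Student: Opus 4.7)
The plan is to localize the excess topology of $\Phi$ (at most $g_1 - g_0$ handles per fiber near $K$) to small neighborhoods of the singular points of members of $\Phi|_K$, and then pinch these handles off parametrically in $y$. For each $x_0 \in K$, since $\fg(\Phi(x_0)) \leq g_0$, I would use property \ref{item:SingPointsUpperBound} of Definition \ref{def:Simon_Smith_family} to find pairwise disjoint open balls $U_1, \dots, U_k \subset M$ whose union contains $P_\Phi(x_0)$, with radii so small that $\Phi(x_0) \cap \overline{U_j}$ is a cone over a disjoint union of circles and $\Phi(x_0) \cap (M \setminus \bigcup_j U_j)$ is a smooth surface with boundary of genus $\fg(\Phi(x_0))$. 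By the smooth convergence away from $P_\Phi(x_0)$ together with closedness, there is a neighborhood $V(x_0) \subset X$ such that for every $y \in V(x_0)$ the surface $\Phi(y)$ is $C^\infty$-close to $\Phi(x_0)$ away from $\bigcup_j U_j$, with all singular points of $\Phi(y)$ lying inside $\bigcup_j U_j$. Consequently, any excess handles of $\Phi(y)$ relative to $\Phi(x_0)$ must live inside these balls.

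Inside each ball $U_j$, one can pinch off the excess handles directly. Concretely, $\Phi(y) \cap U_j$ is a compact surface-with-boundary whose boundary is $C^\infty$-close to that of $\Phi(x_0) \cap U_j$, but whose genus may be higher. Since $U_j$ is a $3$-ball and the boundary structure is essentially fixed, the excess genus is eliminated by finitely many neck-pinch surgeries, possibly creating closed components, followed by shrinking those closed components to isolated points, as permitted by Definition \ref{defn:pinch_off}. Parameterizing this procedure in $y$ gives a local one-parameter pinch-off of $\Phi(y)$ inside $U_j$ that leaves the complement untouched.

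By compactness, $K$ is covered by finitely many $V(x_0^{(i)})$; after a suitable refinement of $X$, choose a subcomplex $Z \subset X$ containing $K$ in its interior and contained in $\bigcup_i V(x_0^{(i)})$. Glue the local pinch-off processes over $Z$ using a partition of unity subordinate to the cover, and order the pinch-offs by a continuous notion of ``handle size'' so that at time $t$ only the handles smaller than some $\epsilon(t)$ have been pinched, with $\epsilon(1)$ large enough to kill all excess genus. This produces a Simon-Smith homotopy $H \colon [0,1] \times Z \to \cS(M)$ with $H(0,\cdot) = \Phi|_Z$, with each fiber $t \mapsto H(t,y)$ a pinch-off process in the sense of Definition \ref{defn:pinch_off}, and with $H(1,\cdot)$ landing in $\cS_{\leq g_0}$ (since on a small neighborhood of each $x_0 \in K$ the pinched surface realizes $\fg(\Phi(x_0)) \leq g_0$, and the total effect on $Z$ is genus-non-increasing down to at most $g_0$).

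The main obstacle I anticipate is parametric coherence on overlaps: as $y$ varies, the location and number of ``small necks'' to be pinched can change, and the glued family must remain, for each fixed $y$, an honest pinch-off process with only finitely many spacetime singularities satisfying the Simon-Smith smoothness required by Definition \ref{def:Simon_Smith_family}. Overcoming this requires a size-monotone scheduling of pinch-off times together with carefully chosen cut-off functions in $y$, in the spirit of the interpolation arguments in \cite[\S 2]{ChuLiWang2025GenusTwoI} and \cite{chuLi2024fiveTori}, so that merging or splitting neck regions match up continuously as $y$ moves between covering patches and so that the finitely-many-singularities condition is preserved throughout the deformation.
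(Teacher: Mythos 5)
The paper itself does not prove Proposition \ref{prop:pinchOffg0g1}: it is imported verbatim from \cite[\S 2]{ChuLiWang2025GenusTwoI}, so there is no in-paper argument to compare against, and your attempt has to stand on its own. It does not, because its decisive step is missing. Even at the single-fiber level the local analysis is imprecise: Definition \ref{def:punctate_surf} only gives a finite set of non-smooth points, so $\Phi(x_0)\cap\overline{U_j}$ need not be a cone over circles; and an ``excess handle'' of $\Phi(y)$ need not \emph{live inside} a ball $U_j$ --- it can be a tube inside $U_j$ whose union with the essentially fixed outside part creates the extra genus. To pinch it by a move allowed in Definition \ref{defn:pinch_off} you must exhibit an essential curve on $\Phi(y)$ bounding an embedded disc whose interior is disjoint from the \emph{entire} surface; this is true for topology trapped in a small ball, but it requires an honest Alexander/innermost-disc argument (and bookkeeping of the closed components split off, to be shrunk afterwards), none of which you supply --- you simply assert that ``the excess genus is eliminated by finitely many neck-pinch surgeries.''

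The more serious gap is the parametric step, which is the entire content of the proposition. One cannot glue families of surfaces, or families of pinch-off processes, ``using a partition of unity'': $\cS(M)$ has no linear structure, so the phrase has no meaning as stated. Likewise, the ``size-monotone scheduling'' of pinch times is exactly where the difficulty lies: as $y$ varies, necks appear, merge, change compressing discs and disappear, and one must still produce a single map $H:[0,1]\times Z\to\cS(M)$ that is a Simon-Smith family in the sense of Definition \ref{def:Simon_Smith_family} (uniform finite bound on singular points, smooth convergence away from them, closedness), with every slice $t\mapsto H(t,y)$ a pinch-off process having only finitely many spacetime singularities of the allowed types. Your final paragraph concedes this and defers it to arguments ``in the spirit of'' \cite{ChuLiWang2025GenusTwoI} and \cite{chuLi2024fiveTori} --- that is, to the very interpolation machinery whose conclusion is the statement to be proved. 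As written, the proposal is a plausible outline, not a proof; to make it one you would either have to carry out that parametric interpolation in detail or simply cite \cite[\S 2]{ChuLiWang2025GenusTwoI} as the paper does.
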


\subsection{Subspaces of $\Z^n_2 $}\label{sect:subspaceOfGr} Let $\Z_2=\Z/2\Z$. Then $\Z^n_2$ is a vector space over $\Z_2$. In this section, we are going to consider the set of subspaces (or, equivalently,  subgroups) in $\Z^n_2$, i.e. the {\it Grassmannian}, and define a topology on it.
  
   For any $n\geq 1$, and any vector space $V$, we define the Grassmannian
$$\Gr(V):= \{A:A \textrm{ is subspace of }V\}.$$
We will primarily be interested in the case where $V$ is the vector space  $\Z^n_2$ over $\Z_2$.
Later we will also be interested in the  following subset  of $\Gr(\Z^{2n}_2)=\Gr(\Z^{n}_2\oplus\Z^n_2)$.
Let $I_\id:\Z_2^n \oplus \Z_2^n\to\Z_2$ be the standard bilinear form given by the $n\x n$ identity matrix. 
For any integers $0\leq n_1\leq n_2\leq n$, we  define  $$\Gr^n[n_1,n_2]:=\{(A_1, A_2):A_1,A_2\textrm{ are subspaces of }\Z_2^n,n_1\leq \rank(I_\id|_{A_1\oplus A_2})\leq n_2\}.$$
Note, since the set $\Gr(\Z^n_2)\x \Gr(\Z^n_2)$ can naturally be viewed as a subset of $\Gr(\Z^{2n}_2)$ via the mapping $(A_1,A_2)\mapsto A_1\oplus A_2$, so can $\Gr^n[n_1,n_2]$.

We are going to define a topology on $\Gr(\Z^{2n}_2)$, and thereby also $\Gr^n[n_1,n_2]$. But first, let us review  a general way to define topology on finite sets.
\subsubsection{Topology on finite sets}
Early works on finite topological spaces were given by M. McCord \cite{mccord1966FiniteSpace} and R. Stong \cite{stong1966finiteSpace}.
We also refer readers to a more modern treatment \cite{mayPishevarFinite} by J. P. May and E. Pishevar.

Given any set $X$ with a partial order $\leq$, we can define a topology on it. Namely, for any $x\in X$, we define $$U_x:=\{y\in X:x\leq y\}.$$ Then the collection of all $U_x$ is actually a basis, and we thus can form a topology with this basis for $X$. It is easy to see that $U_x$ would be the smallest open set containing $x$.
\begin{rmk}
    We note that our definition of $U_x$ differs from the usual convention in literature, in which $U_x$ is defined as $\{y\in X:y\leq x\}$ instead. Our definition of $U_x$ here is such that   the maps $\frak i_{\Gr,\ins},\frak i_{\Gr,\out}$ defined in \S \ref{sect:identificationGr} are continuous, under the topology induced by the   partial order on $\Gr^n$ in  Definition \ref{defn:partialOrderGr} (and flipping the direction of the partial order in Definition \ref{defn:partialOrderGr} would be really unnatural, as one shall see).
\end{rmk}

Since $\leq$ is a partial order, this topology is in fact $T_0$, meaning  for each pair of distinct points, there exists an open set
containing one but not the other. If $X$ is finite, then the space $X$ would also be {\it Alexandroff} (also called an {\it A-space}), which means arbitrary intersection of open sets are open. 

In the case where $(X,\leq )$ is finite, we can naturally give it an {\it abstract simplicial complex structure}, which refers of a family of subsets of $X$ that is closed under taking subsets,  i.e. every subset of a set in the family is also in the family.  Members of this family are called  {\it simplexes}. 

Namely, the simplexes  are given by the {\it  chains} in $X$: Recall that  any sequence of the form $x_1\leq...\leq x_n$, with $x_i$'s distinct,   is called a chain. The members of any given simplex are also called the {\it vertices} in it. The {\it dimension} of a simplex is defined as the number of its vertices minus 1. As a side note, let us mention that, as proven by M. McCord \cite{mccord1966FiniteSpace}, for any finite partially ordered set $X$, the geometric realization of the above abstract simplicial complex structure would be weakly homotopy equivalent to the space $X$ itself.

Finally, let us record the following facts for future use. 
\begin{lem}[Directly from Lemma 6 of   \cite{mccord1966FiniteSpace}]\label{lem:contrac} 
    If  $(X,\leq)$ is a finite partially ordered set, and $x\in X$, then $U_x$ is contractible.
\end{lem}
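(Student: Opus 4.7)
The plan is to construct an explicit homotopy $H : U_x \times [0,1] \to U_x$ from $\id_{U_x}$ to the constant map at $x$, exploiting the fact that $x$ is the unique minimum element of the sub-poset $U_x$: indeed $x\leq y$ for every $y \in U_x$ by definition. Concretely, I would define
\[
H(y, t) := \begin{cases} y & \text{if } t \in [0,1), \\ x & \text{if } t = 1. \end{cases}
\]
Then $H(\cdot, 0) = \id_{U_x}$ and $H(\cdot, 1)$ is constant equal to $x$, so continuity of $H$, as a map from $U_x \times [0,1]$ (with the product topology) to $U_x$, is the only thing left to verify.

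For continuity, it suffices to examine preimages of the basic open sets $\{U_y : y \in U_x\}$, where $U_y = \{z \in U_x : y \leq z\}$. Fix such a $y$, so that $x \leq y$ necessarily. For $(z,t)$ with $t<1$, the condition $H(z,t) \in U_y$ reduces to $z \in U_y$; for $t = 1$, it becomes $y \leq x$, which combined with $x \leq y$ forces $y = x$ by antisymmetry. Hence
\[
H^{-1}(U_y) = \begin{cases} U_x \times [0,1] & \text{if } y = x, \\ U_y \times [0,1) & \text{if } y \neq x, \end{cases}
\]
which is open in the product topology in either case. This proves $H$ is continuous and therefore $U_x$ is contractible.

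The main (and essentially only) subtle point is understanding why this $H$, which looks classically discontinuous at $t=1$, is nevertheless continuous in the Alexandroff topology: the jump to $x$ at time $1$ is invisible to any basic open set $U_y$ with $y \neq x$, because $y \not\leq x$ forces $x \notin U_y$. Only the preimage of the ambient set $U_x$ itself sees the jump, and that preimage is the entire cylinder. This is precisely McCord's observation that an Alexandroff $T_0$-space arising from a finite poset admits such upward contractions onto any minimum, i.e.\ the content of Lemma 6 of \cite{mccord1966FiniteSpace}, transcribed into the paper's notational convention $U_y = \{z : y \leq z\}$.
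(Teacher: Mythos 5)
Your proof is correct. Note that the paper gives no argument of its own here---the lemma is simply imported as Lemma 6 of \cite{mccord1966FiniteSpace}---so your explicit verification is a self-contained substitute rather than a divergence, and it is essentially the standard Alexandroff-space argument underlying McCord's lemma. The key points all check out in the paper's convention $U_y=\{z:y\leq z\}$: the sets $U_y$ with $y\in U_x$ do form a basis of the subspace topology on $U_x$ (any open $V\ni w$ contains the minimal open set $U_w$, and $U_w\subset U_x$ since $x\leq w$), so verifying openness of $H^{-1}(U_y)$ for these suffices; for $y\neq x$ one has $y\not\leq x$ by antisymmetry, hence $x\notin U_y$ and $H^{-1}(U_y)=U_y\times[0,1)$, while $H^{-1}(U_x)$ is the whole cylinder. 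Your closing remark correctly identifies why the ``jump at $t=1$'' is harmless: it lands at the minimum $x$, which lies in no basic open set other than $U_x$ itself (this is also why the same formula would fail if one tried to contract onto a non-minimal point).
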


\begin{thm}[Theorem 6 of \cite{mccord1966FiniteSpace}]\label{lem:homoEquiv}
Suppose $p$ is a map from a space $E$ into a space $B$ for which there
exists a basic open cover $\cU$ of $B$ satisfying the following condition:
For each $U \in \mathcal{U}$,  the restriction
$p|_{p^{-1}(U)}: p^{-1}(U) \to U$
is a weak homotopy equivalence.
Then $p$  is a weak homotopy equivalence.
\end{thm}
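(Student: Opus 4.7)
The plan is to show that $p$ induces an isomorphism $p_*\colon \pi_n(E,e_0)\to \pi_n(B,p(e_0))$ for every $n\geq 0$ and every basepoint, by a local-to-global argument that leverages the basic nature of $\cU$. The essential feature is that, since $\cU$ is a basis for the topology of $B$, every compact subset of $B$ admits an arbitrarily fine cover by elements of $\cU$; this is the hook that turns the local weak-equivalence hypothesis into a global statement.

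For surjectivity on $\pi_n$, I would take a map $f\colon (S^n,s_0)\to (B,b_0)$, pull back $\cU$ to an open cover of $S^n$, and by compactness extract a triangulation $K$ so fine that each closed simplex $\sigma$ of $K$ maps into some chosen $U_\sigma\in\cU$. Using that $\cU$ is a basis, I would arrange that $\sigma\subset\tau$ in $K$ implies $U_\sigma\subset U_\tau$. Then I build the lift $\widetilde f\colon S^n\to E$ inductively on skeleta: at each step, the boundary of a $k$-simplex $\sigma$ has already been lifted into $p^{-1}(U_\sigma)$, and the hypothesis that $p|_{p^{-1}(U_\sigma)}$ is a weak equivalence (hence induces an isomorphism on $\pi_{k-1}$) lets me extend the lift across $\sigma$ up to homotopy. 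Injectivity on $\pi_n$ follows by running the same procedure on homotopies $S^n\times [0,1]\to B$, now lifting a map of a disk relative to its boundary.

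The main obstacle will be \emph{coherence at simplex overlaps}: the extension produced for $\sigma$ from the local weak equivalence is only defined up to homotopy within $p^{-1}(U_\sigma)$, and a priori different faces could force incompatible choices. The compatibility $U_\sigma\subset U_\tau$ for $\sigma\subset\tau$ is exactly what rescues the argument, because any lift already in place on a face lies inside the larger preimage needed at the next stage, so the inductive extensions compose consistently. A more conceptual reformulation, which I would keep in mind as a sanity check, is that both $B$ and $E$ are naturally equivalent to the homotopy colimits over the poset of finite intersections from $\cU$ (indexed by reverse inclusion), and a levelwise weak equivalence between two such diagrams induces a weak equivalence of homotopy colimits by homotopy invariance; this is morally what McCord's original argument establishes by hand, and it is the underlying reason the simplex-by-simplex procedure must succeed.
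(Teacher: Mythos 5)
The paper does not prove this statement at all: it is quoted verbatim as Theorem 6 of McCord's paper, and McCord's own argument runs through the Dold--Thom ``distinguished open set'' machinery (a gluing criterion plus limit arguments), not a direct skeletal lifting. Judged on its own, your plan has two genuine gaps, and they sit exactly where the theorem's difficulty is concentrated. First, the monotone assignment: you assert that ``using that $\cU$ is a basis'' you can choose $U_\sigma\in\cU$ with $f(\sigma)\subset U_\sigma$ and $U_\sigma\subset U_\tau$ whenever $\sigma\subset\tau$. Basis-likeness only provides, for each \emph{point} of an intersection of members of $\cU$, a member of $\cU$ between the point and the intersection; it does not provide a single member of $\cU$ containing the \emph{compact} set $f(\sigma)$ inside the intersection of the sets already assigned to the cofaces of $\sigma$. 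Producing the nested assignment therefore needs a nontrivial iterated-subdivision argument (processing simplices by decreasing dimension, re-subdividing and re-assigning as you go), and this is precisely where the hypothesis does its work; as written it is an assertion. Second, the inductive step: the local weak equivalence over $U_\sigma$ only yields a lift $h$ on $\sigma$ with $p\circ h\simeq f|_\sigma$ rel $\partial\sigma$, so after each cell you are no longer lifting $f$ itself. You must carry the homotopies along as part of the inductive data (the standard lemma that weak equivalences satisfy the homotopy right lifting property against CW pairs, with the homotopy prescribed on the subcomplex) and splice the cell-wise homotopies into a global one; your remark that the nesting makes the extensions ``compose consistently'' does not address how you keep a globally defined homotopy to the original $f$ without the homotopy-extension step pushing higher simplices out of their assigned sets. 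Both points can be repaired (this is essentially the content of the careful write-ups of McCord/Dold--Thom-type results), but they are the entire technical content of the theorem, not routine details.

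The homotopy-colimit ``sanity check'' is also flawed as stated. The hypothesis gives weak equivalences only over members of $\cU$, not over finite intersections $U_1\cap\dots\cap U_k$, which need not belong to $\cU$ (a basis-like cover is not closed under intersection); so a levelwise equivalence of diagrams indexed by finite intersections is not available without first proving the theorem over those intersections, which is circular. One can instead index over the poset $\cU$ itself, but then the inputs $\mathrm{hocolim}_{U\in\cU}U\simeq B$ and $\mathrm{hocolim}_{U\in\cU}p^{-1}(U)\simeq E$ for an arbitrary, possibly non-numerable open cover are theorems of essentially the same depth as McCord's statement, whose standard proofs use either McCord's theorem or the same gluing technology; so this reformulation is a plausibility check, not an independent proof.
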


\subsubsection{A partial order on $\Gr(\Z^n_2)$}
\begin{defn}\label{defn:partialOrderGr}
    We  define a partial order on $\Gr(\Z^n_2)$ by declaring $A\leq B$ if and only if $A\subset B$.
\end{defn}
By the previous section, we can view $\Gr(\Z^n_2)$ as a topological space, with an abstract simplicial complex structure. It is easy to see that the simplex of maximal dimension is $n$, because the maximum possible  length of a chain in  $\Gr(\Z^n_2)$ is $n+1$. Moreover, since $\Gr(\Z^n_2)=U_{(\{0\},\{0\})}$, this space is  contractible  by Lemma \ref{lem:contrac}.

Now consider $\Z^{2n}_2=\Z^n_2\oplus\Z^n_2$, and let $e_1,...,e_n$ be the standard basis in $\Z^n_2$. 
In later sections, we will be particularly interested in the subspace $\Gr^n[1,n-1]\subset \Gr(\Z^{2n}_2)$.  An example of a simplex of maximal dimension in it is \begin{align*}
   & (\langle e_1\rangle,\langle e_1\rangle)\leq(\langle e_1\rangle,\langle e_1,e_2\rangle)\leq...\leq (\langle e_1\rangle,\langle e_1,...,e_{n}\rangle)\\
   &\leq (\langle e_1,e_2\rangle,\langle e_1,...,e_{n}\rangle)\leq ...\leq (\langle e_1,...,e_{n-1}\rangle,\langle e_1,...,e_{n}\rangle),
\end{align*}
which has dimension $2n-3$.

\subsection{Symmetric product of $S^1$}\label{sect:symProd} Given a topological space $X$,
we let $\Sym^{n}(X)$ be the $n$-fold symmetric product of  $X$: 
$$\Sym^n(X):=X^n/S_n,$$
where $S_n$ is the symmetric group acting on $X^n$ by permutation.  Consider the circle $S^1:=\R/2\pi \Z$. In this section, we review the topology of $\Sym^n(S^1)$ and a natural simplicial complex structure on it.  We refer readers to \cite{morton1967symmetricProduct} or \cite{nlab:symmetric_product_of_circles} for references.

It is well-known that $\Sym^n(S^1)$ is  homeomorphic to an $\Delta^{n-1}$-bundle over $S^1$, where $\Delta^{n-1}$ denotes a closed $(n-1)$-simplex. Let us describe this homeomorphism below.

We first describe the  projection map of this bundle, $\Sym^n(S^1)\to S^1$: 
It sends any $\balpha\in \Sym^n(S^1)$ to the sum of all its $n$ members (which is well-defined modulo $2\pi$).  Let $\Sym^n_0(S^1)\subset \Sym^n(S^1)$ be the fiber over the point $[0]\in \R/2\pi\Z$. We will describe the homeomorphism from $\Delta^{n-1}$ to 
$\Sym^n_0(S^1)$ below. 

We start with a closed $(n-1)$-simplex 
\begin{equation}\label{eq:simplex}
    \{(r_1,...,r_{n-1})\in[0,2\pi]^{n-1}:\sum r_i\leq2\pi\}.
\end{equation}
For any point $(r_1,...,r_{n-1})$ in it, we define 
$$s_1:=0,s_2:=r_1,s_3:=r_1+r_2,\;...\;,s_n:=r_1+...+r_{n-1}.$$
Note $(s_1,...,s_n)$ naturally can be viewed as a point in $\Sym^n(S^1)$. Finally, we move each point by the mean: We consider
$$t_k:= s_k-\frac 1n(s_1+...+s_n),$$
for $k=1,...,n$. Then $(t_1,...,t_n)$ gives a point in $\Sym^n_0(S^1)$. The desired homeomorphism $\Delta^{n-1}\to\Sym^n_0(S^1)$ is given the map $(r_1,...,r_{n-1})\mapsto (t_1,...,t_n)$: See \cite{morton1967symmetricProduct} or \cite{nlab:symmetric_product_of_circles} for the detailed proof.

Using the simplicial complex structure on the $(n-1)$-simplex (\ref{eq:simplex}), we can impose on $\Sym^n_0(S^1)$  a natural simplicial  structure. This simplicial structure has the following geometric feature. Fix any $\balpha\in\Sym^n_0(S^1)$, and suppose that it has exactly $k$ distinct entries ($1\leq k\leq n$). Then we consider the set of all possible configurations in $\Sym^n_0(S^1)$ that can be obtained from applying some diffeomorphism on $S^1$ to $\balpha$. The collection of all such configurations forms an open $(k-1)$-dimensional subset of $\Sym^n_0(S^1)$ and would be an open $(k-1)$-cell, which contains in particular the point $\balpha$.

\section{Homology descent for pinch-off process}\label{sect:homoDescent}

Let $M$ be a smooth orientable 3-manifold.
In this section, we study the  homology classes of the complement region of members of Simon-Smith families. Namely, given a Simon-Smith family $\Phi:X\to\cS(M)$, we would like to  relate the homology groups $H_1(M\backslash \Phi(x))$ for $x\in X$. Note all chains and homology groups will be in $\Z_2$-coefficients.

\subsection{Homology classes in a Simon-Smith family}

\begin{defn}
Let $M$ be a smooth orientable 3-manifold, and $\Phi:X\to\cS(M)$ be a Simon-Smith family.
    We define the set
$$\fH(\Phi):=\{(x,c):x\in X,c\in H_1(M\backslash \Phi(x))\}.$$
\end{defn}
Note that, for each $x\in X$, we can canonically identify $H_1(M\backslash\Phi(x))$ with the subset  $\{x\}\x H_1(M\backslash\Phi(x))\subset \fH(\Phi)$.

We  equip on $\fH(\Phi)$ a topology as follows. For any $x_0\in X$, let $\gamma\subset M\backslash\Phi(x_0)$ be any simplicial 1-cycle. Let $U\subset X$ be any open neighborhood of $x_0$ that satisfies the following property: For each $x\in U$, the 1-cycle $\gamma$ lies inside $M\backslash\Phi(x)$ too. Note that any sufficiently small open neighborhood  $U$ of $x_0$ is guaranteed to satisfy this property, by the closedness property of Simon-Smith family (Definition \ref{def:Simon_Smith_family}). Then, we define the set
\begin{equation}\label{eq:openSetDef}
    \fU(x_0,\gamma,U):=\{(x,c)\in \fH(\Phi):x\in U,c\in H_1(M\backslash\Phi(x)),c=[\gamma]\}.
\end{equation}
Note, here $c=[\gamma]$ means $c$ is  represented by $\gamma$ when {\it $\gamma$ is viewed as a $1$-cycle in $M\backslash\Phi(x)$.} 
\begin{lem}
    The collection of all such sets $\fU(x_0,\gamma,U)$ forms a base for some topology on $\fH(\Phi)$.
\end{lem}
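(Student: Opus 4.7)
My plan is to verify the two standard axioms that characterize a base for a topology: (i) every point of $\fH(\Phi)$ lies in some set of the form $\fU(x_0,\gamma,U)$, and (ii) given $(x_1,c_1)\in\fU(x_0,\gamma,U)\cap\fU(x_0',\gamma',U')$, there is a third basis set containing $(x_1,c_1)$ and contained in the intersection. Property (i) is immediate: for any $(x_0,c)\in\fH(\Phi)$ I pick a simplicial $1$-cycle $\gamma\subset M\setminus\Phi(x_0)$ representing $c$, then invoke the closedness clause \ref{item:closedFamily} of Definition \ref{def:Simon_Smith_family} (applied to the compact set $\gamma$, disjoint from the closed set $\Phi(x_0)$) to produce an open neighborhood $U$ of $x_0$ on which $\gamma\cap\Phi(x)=\emptyset$. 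Then $(x_0,c)\in\fU(x_0,\gamma,U)$.

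For (ii), the natural candidate is $\fU(x_1,\gamma,U'')$ for some small $U''$. Since $[\gamma]=c_1=[\gamma']$ in $H_1(M\setminus\Phi(x_1))$, there is a finite simplicial $2$-chain $\sigma$ supported in $M\setminus\Phi(x_1)$ with $\partial\sigma=\gamma+\gamma'$ over $\Z_2$. Applying the closedness clause a second time, now to the compact set $|\sigma|\subset M\setminus\Phi(x_1)$, yields an open neighborhood $V$ of $x_1$ on which $|\sigma|\cap\Phi(x)=\emptyset$. I will then set $U'':=U\cap U'\cap V$ and verify three things: (a) since $U''\subset U$, the cycle $\gamma$ avoids $\Phi(x)$ for every $x\in U''$, so $\fU(x_1,\gamma,U'')$ is a legitimate basis element (and it obviously contains $(x_1,c_1)$); (b) $\fU(x_1,\gamma,U'')\subset\fU(x_0,\gamma,U)$ is immediate from the definitions and $U''\subset U$; and (c) for each $(x,c)\in\fU(x_1,\gamma,U'')$, the chain $\sigma$ lies in $M\setminus\Phi(x)$ by the choice of $V$, so $[\gamma]=[\gamma']$ in $H_1(M\setminus\Phi(x))$, and hence $c=[\gamma]=[\gamma']$ with $x\in U'$, giving $(x,c)\in\fU(x_0',\gamma',U')$.

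The only conceptual ingredient is this two-fold use of the Simon-Smith closedness axiom---once to propagate a $1$-cycle to nearby members of the family, and once to propagate a $2$-chain witnessing a homological identification between two cycles. I do not expect any genuine obstacle; the argument is essentially the same one used to show that the flat topology on integral currents, restricted to avoid a given compact set, behaves continuously under small perturbations of that set.
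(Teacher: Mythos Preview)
Your proposal is correct and follows essentially the same approach as the paper: both arguments pick a $2$-chain $\sigma$ in $M\setminus\Phi(x_1)$ witnessing $[\gamma]=[\gamma']$, then use the closedness axiom of Simon--Smith families to propagate $\sigma$ to nearby parameters, so that the basis set $\fU(x_1,\gamma,U'')$ with $U''\subset U\cap U'$ small enough sits inside the intersection. The only minor addition in your write-up is the explicit verification of the covering axiom (i), which the paper leaves implicit.
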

\begin{proof}
    Suppose we have two sets $\fU_1=\fU(x_1,\gamma_1,U_1)$ and $\fU_2=\fU(x_2,\gamma_2,U_2)$, and some $(x_0,c_0)$ that lies in their intersection. We need to find some 
    set of the form (\ref{eq:openSetDef}) that contains $(x_0,c_0)$ and is a subset of  $\fU_1\cap \fU_2$.
    
    We observe that, by construction:
    \begin{itemize}
        \item $x_0\in U_1\cap U_2$.
        \item $\gamma_1\subset M\backslash\Phi(x_0)$, $\gamma_2\subset M\backslash\Phi(x_0)$.
        \item $[\gamma_1]=[\gamma_2]=c_0$ as elements in $H_1(M\backslash\Phi(x_0))$.
    \end{itemize}
    Let $\sigma\subset M\backslash\Phi(x_0)$ be some simplicial 2-chain bounded by $\gamma_1-\gamma_2$. Then again by the closedness property of Simon-Smith families, there exists some open neighborhood $U$ of $x_0$ within $U_1\cap U_2$ such that for any $x\in U$, $\sigma\subset M\backslash\Phi(x)$ too. Clearly, by construction,
    $$(x_0,c_0)\in \fU(x_0,\gamma_1,U).$$
        To finish the proof, it suffices to show that 
    $$\fU(x_0,\gamma_1,U)\subset \fU_1\cap\fU_2.$$

    Pick any $(x,c)\in \fU(x_0,\gamma_1,U).$ First, note that:
    \begin{itemize}
        \item $x\in U\subset U_1$.
        \item $\gamma_1\subset M\backslash\Phi(x)$ as $\sigma\subset M\backslash\Phi(x)$ by construction.
        \item $c=[\gamma_1]$ as elements in $H_1(M\backslash\Phi(x))$.
    \end{itemize}
    Hence, $(x,c)\in \fU_1$.
    Moreover, we have
    \begin{itemize}
        \item $x\in U\subset U_2$.
        \item $\gamma_2\subset M\backslash\Phi(x)$ as $\sigma\subset M\backslash\Phi(x)$.
        \item Since $\gamma_2=\gamma_1+\partial\sigma$, 
        we also have $c=[\gamma_1]=[\gamma_2]$ as elements in $H_1(M\backslash\Phi(x))$.
    \end{itemize} Thus, $(x,c)\in\fU_2$ too. So $x\in\fU_1\cap\fU_2$, as desired.
\end{proof}

\begin{exmp}
    Suppose $M=S^3$, and $\Phi_1:[0,1]\to \cS(M)$ is a smooth family of tori. Then each $H_1(M\backslash \Phi(x))$ is isomorphic to $\Z_2\oplus\Z_2$, and the topological space $\frak H(\Phi_1)$ has four connected components.

    Now, consider another smooth family $\Phi_2:S^1\to \cS(S^3)$ of  tori such that when a point travels along $S^1$ once, the inside and outside regions of the associated torus  are exchanged. Then $\frak H(\Phi_2)$  has only three connected components.
\end{exmp}

We will also often consider the following subspaces:
\begin{defn}
    Suppose $\Phi:X\to\cS(M)$ is a  Simon-Smith that admits a continuous choice  of inside and outside regions (continuous in the sense of measure) for $\Phi(x)$, for every $x$. Then we define
    $$\fH_{\ins}(\Phi):=\{(x,c)\in\fH(\Phi):c\in H_1(\ins(\Phi(x)))\}$$
    and 
        $$\fH_{\out}(\Phi):=\{(x,c)\in\fH(\Phi):c\in H_1(\out(\Phi(x)))\}$$
\end{defn}
Note, $\ins(\cdot)$ denotes the inside region, while $\out(\cdot)$ the outside. We put on these two sets  the subspace topology from $\fH(\Phi)$.

Now, let us consider the Grassmannians of $H_1(M\backslash \Phi(x))$, for different $x$. Note that in $H_1(M\backslash \Phi(x))$,  subspace and subgroup are the same.

\begin{defn}
Let $M$ be a smooth orientable 3-manifold, and $\Phi:X\to\cS(M)$ be a Simon-Smith family.
    We define the set
$$\fGr(\Phi):=\{(x,A):x\in X,A  \textrm{ is a subspace of } H_1(M\backslash\Phi(x)) \}.$$
\end{defn} 
Again, for any $x\in X$, we can canonically view the Grassmannian $\Gr(H_1(M\backslash \Phi(x)))$ as a subset of $\fGr(\Phi)$.

We can similarly equip $\fGr(\Phi)$ with a topology as follows. For any $x_0\in X$, let $\gamma_1,...,\gamma_n\subset M\backslash\Phi(x_0)$ be any finite collection of simplicial 1-cycle. Let $U\subset X$ be any open neighborhood of $x_0$ that satisfies the following property: For each $x\in U$, each 1-cycle $\gamma_i$ lies inside $M\backslash\Phi(x)$ too. Then, we define the set
\begin{align} 
    \fU(x_0,\{\gamma_i\}_{i=1}^n,U):=&\;\{(x,A)\in \fH(\Phi):x\in U,A \textrm{ is a subspace of } H_1(M\backslash\Phi(x))\\
   \nonumber  &\textrm{ with } [\gamma_1],...,[\gamma_n]\in A\}.
\end{align}
Note, here $[\gamma_i]$ can denote elements of $H_1(M\backslash \Phi(x))$ for any $x\in U$. Then similar as before, we can show that these sets define a basis for some topology on $\fGr(\Phi)$. 

\begin{rmk}
Trivially, for every $x\in X$, the subspace topology on $\Gr(H_1(M\backslash \Phi(x)))$, as viewed as a subspace of $\fGr(\Phi)$, coincides with the topology induced by the partial order on $\Gr(H_1(M\backslash \Phi(x)))$ given by inclusion, as introduced in \S \ref{sect:subspaceOfGr}. 
\end{rmk}

Lastly, let us also define the following two subsets of $\fGr(\Phi)$, which will be equipped with the subspace topology.

\begin{defn}
    Suppose $\Phi:X\to\cS(M)$ is a  Simon-Smith that admits a continuous choice of inside and outside regions for $\Phi(x)$, for every $x$. Then we define
    $$\fGr_{\ins}(\Phi):=\{(x,A)\in\fGr(\Phi):A \textrm{ is a subspace of }  H_1(\ins(\Phi(x)))\}$$
    and 
        $$\fGr_{\out}(\Phi):=\{(x,A)\in\fGr(\Phi):A \textrm{ is a subspace of }  H_1(\out(\Phi(x)))\}$$
\end{defn}

\subsection{Homology descent for pinch-off process}\label{sect:descent}
Specially for pinch-off process, we have a more refined way to relate the homology classes: We introduce notions called {\it homology descent} and {\it homology termination}. These concepts were first developed by A. Sun and the author for mean curvature flow in  \cite{chuSun2023genus}. Below, we are going to obtain analogous results for pinch-off process. Note, we will be using $\Z_2$-coefficients as usual.

We let $\{\Gamma(t)\}_t$ be a pinch-off process: For notational simplicity, we shall always assume it has $[0,T]$ as its parameter space in this section.  We will  consider the complements of the time-slices. We denote, for any $t\in [0,T]$,
    \[
        W[t]:=\{t\}\x (M\backslash \Gamma(t))\subset [0,T]\x M,
    \]
    and for any $t_0,t_1\in[0,T]$, 
    \[
        W[t_0,t_1]:=\bigcup_{t\in[t_0,t_1]}W[t]\subset [T_0,T_1]\x M.
    \]
    
Furthermore,   by Definition \ref{def:punctate_surf} and Definition \ref{def:Simon_Smith_family}, we may fix a continuous choice  of inside region $\ins(\Gamma(t))$ and outside region $\out(\Gamma(t))$ for $\Gamma(t)$, for any $t\in [0,T]$. These all are open regions in $M$. Now, denote $W_\ins[t]:=\{t\}\x \ins(\Gamma(t))$ and $W_\out[t]:=\{t\}\x \out(\Gamma(t))$.

\begin{defn}[Homology descent]\label{defn_order}
We define a  relation $\succ$ on the {\it disjoint} union $$\bigsqcup_{t\in [0,T]}H_1(W[t])$$ as follows. Given two times $ t_0\leq t_1$ in $[0,T]$, and two  homology  classes $c_0\in H_1(W[t_0] )$ and $c_1\in H_{1}(W[t_1] )$, we say that {\it $c_1$ descends from $c_0$}, and denote $c_0\succ c_1,$ if every representative $\gamma_0\in c_0$ and $\gamma_1\in c_1$ must together bound some $2$-chain $\sigma\subset W[T_0,T_1]$, i.e. $\gamma_0-\gamma_1=\partial\sigma$ (see Figure \ref{fig:descent}).
\end{defn}

  \begin{figure}
        \centering
        \makebox[\textwidth][c]{\includegraphics[width=0.7\textwidth]{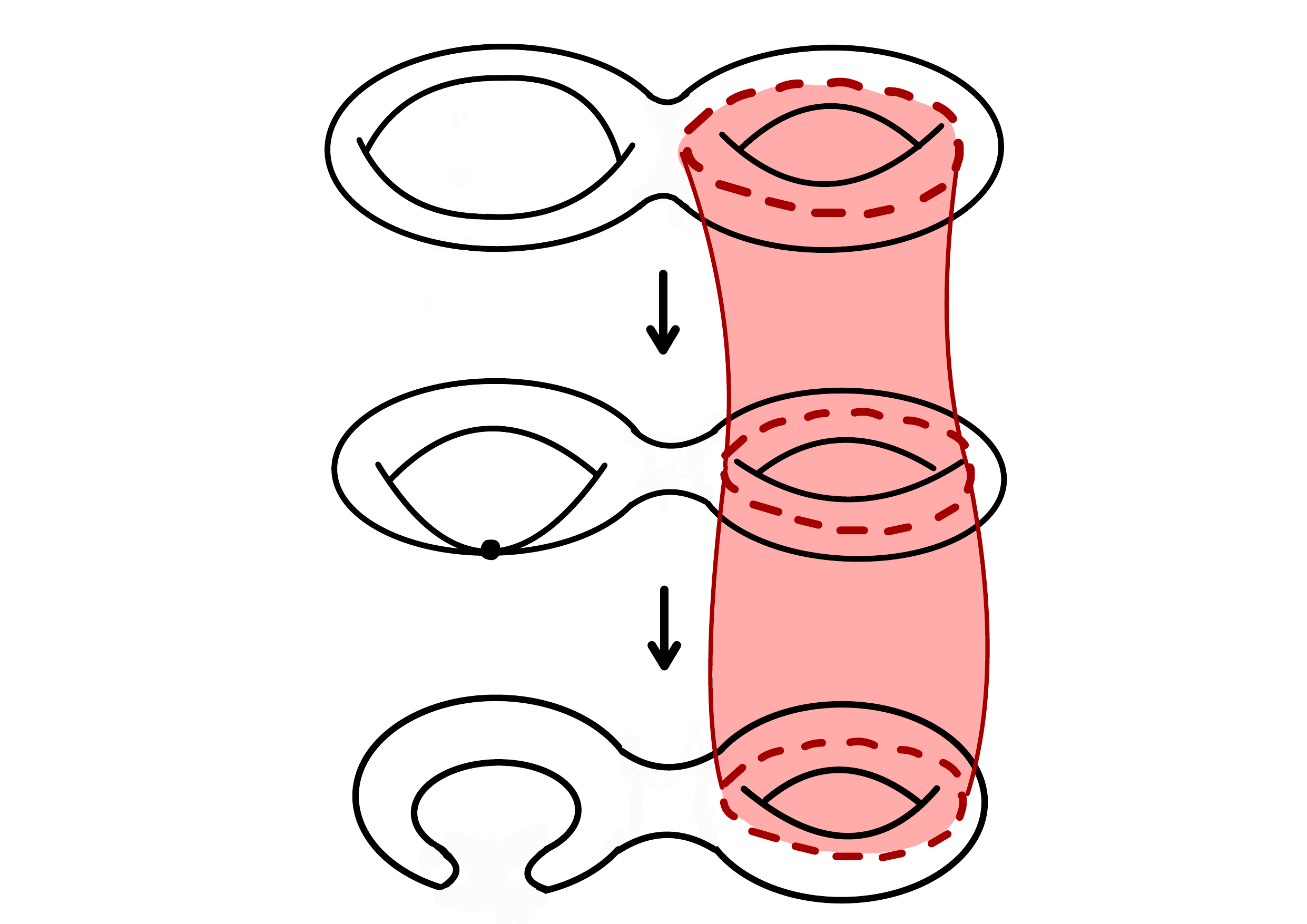}}
        \caption{ The  loop in the top picture is $\gamma_0$, while the  loop in the bottom picture is $\gamma_1$. The shaded surface is $\sigma$.}
        \label{fig:descent}
    \end{figure}

It is straightforward to check that $\succ$ is actually a partial order. Below, let us state some  properties for this partial order.

\begin{prop}\label{prop_at_least_one}
   Let $c_1\in H_1(W[T_1] )$ and $T_0\leq T_1$. Then there exists at least one   $c_0\in H_1(W[T_0] )$ such that $c_0\succ c_1$.
\end{prop}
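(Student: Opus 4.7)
The plan is to establish the following stronger claim and deduce the proposition from it: the inclusion-induced map $H_1(W[T_0];\Z_2) \to H_1(W[T_0, T_1];\Z_2)$ is surjective. Indeed, assuming this, the image $i_* c_1 \in H_1(W[T_0, T_1])$ under the inclusion $i: W[T_1] \hookrightarrow W[T_0, T_1]$ has a preimage $c_0 \in H_1(W[T_0])$ by surjectivity. Then any representatives $\gamma_0$ and $\gamma_1$ of $c_0$ and $c_1$ are homologous in $W[T_0, T_1]$, so they bound some 2-chain $\sigma$ there, which is exactly the definition of $c_0 \succ c_1$.

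First I would reduce to the case where $[T_0, T_1]$ contains at most one spacetime singular event. Since Definition \ref{defn:pinch_off} guarantees only finitely many events, I can partition $[T_0, T_1]$ into finitely many closed subintervals each containing at most one event, and compose the surjections along consecutive subintervals. For a pure-isotopy subinterval, I would extend the local isotopies to a global ambient isotopy $\phi_t: M \to M$ with $\phi_{T_0} = \mathrm{id}$ and $\phi_t(\Gamma(T_0)) = \Gamma(t)$, yielding a homeomorphism $W[T_0, T_1] \cong W[T_0] \times [T_0, T_1]$ via $(t, x) \mapsto (t, \phi_t^{-1}(x))$, and hence an honest homotopy equivalence.

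For a single event at time $t^* \in (T_0, T_1]$ confined to a ball $U \subset M$, I would apply Mayer--Vietoris to the cover
\[
W[T_0, T_1] = A \cup B, \quad A := W[T_0, T_1] \cap ((M \setminus \overline{U}) \times [T_0, T_1]), \quad B := W[T_0, T_1] \cap (U' \times [T_0, T_1]),
\]
where $U' \supset \overline{U}$ is slightly enlarged so that $\partial U' \cap \Gamma(t)$ is $t$-independent. Outside $U$ the family evolves by isotopy, so both $A$ and $A \cap B$ are trivial $[T_0, T_1]$-products and are homotopy equivalent to their $t = T_0$ slices. Comparing the Mayer--Vietoris long exact sequences for $W[T_0]$ and $W[T_0, T_1]$ via the four-lemma, the global surjectivity reduces to the local claim that $W[T_0] \cap U' \hookrightarrow B$ is $H_1$-surjective and $H_0$-injective. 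For a shrinking event, this is straightforward: the ``inside'' region of the collapsing sphere sweeps out a contractible spacetime cone, and the outside region extends as a trivial product. For a surgery event, the local spacetime surface has the standard model $\{x^2 + y^2 - z^2 = s(t)\}$ with $s'(t^*) \ne 0$, which is in fact a smooth $3$-manifold in the $4$-dimensional spacetime (the conical singularity of each time slice regularizes); a direct computation then shows its complement has two contractible components in spacetime, matching the two components of $W[T_0] \cap U'$ bijectively and collapsing any $H_1$-class there.

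The main obstacle is verifying the precise local topology at the surgery event and the subsequent four-lemma bookkeeping. The key technical input is that the surgery spacetime surface, while singular in each time slice, is globally smooth in $4$-dimensional spacetime, making its complement homology computable via a fibration over time. Concretely, the $S^1$-loop going around the pre-event neck in the $T_0$-slice becomes null-homologous in spacetime, because one can slide it forward in time and contract it inside the post-event ``middle region'' between the two separated sheets. Once this local model is nailed down, the four-lemma comparison of Mayer--Vietoris sequences delivers the global $H_1$-surjectivity and hence the proposition.
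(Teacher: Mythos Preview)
Your approach is valid but takes a genuinely different route from the paper. The paper's proof is a one-line citation of \cite[\S 9]{chuLi2024fiveTori}, which shows that any loop in $W[T_1]$ can be homotoped through $W[T_0,T_1]$ back to a loop in $W[T_0]$; applying this to each loop summand of a cycle representing $c_1$ produces $c_0$ immediately. You instead prove the equivalent statement that $H_1(W[T_0])\to H_1(W[T_0,T_1])$ is surjective from scratch, via a Mayer--Vietoris/four-lemma localization at each singular event. This buys self-containedness at the cost of more local bookkeeping. Two points deserve tightening: (i) ``composing the surjections'' along subintervals is not literally a composition of maps, since the targets $H_1(W[T_i,T_{i+1}])$ vary---the clean fix is to establish the \emph{proposition itself} on each subinterval and then use transitivity of $\succ$; (ii) Definition~\ref{defn:pinch_off} also permits the double cone to \emph{persist} after the event rather than split, so the model $s'(t^*)\neq 0$ is not guaranteed and the spacetime ``outside'' component need not be contractible (it is homotopy-equivalent to $S^1$ in the persisting case). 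Fortunately your needed local claim---$H_1$-surjectivity and $H_0$-injectivity of $B_0\hookrightarrow B$---still holds in that case (both maps are in fact isomorphisms), so the four-lemma step goes through unchanged once you treat the two sub-cases separately.
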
 Note such $c_0$ may not be unique. Take Figure \ref{fig:descent} as an example. At the starting time, if $c_0$ denotes the class in the outside region that is linked with the left (solid) handle, then we see that at the end, $c_0$ descends to the trivial class. But clearly, the trivial class at the beginning also descends to the trivial class at the end.
\begin{proof}
In \cite[\S 9]{chuLi2024fiveTori}, it was proven that for any loop in $W[T_1]$, we can homotope it within $W[T_0,T_1]$ to a loop in $W[T_0]$. Now,   letting $c_1$ be represented by a sum of loops $\gamma_1+...+\gamma_n$,  we apply this  fact to each $\gamma_i$. Then the proposition follows immediately.
\end{proof}

\begin{prop}
    Let $T_0<T_1$, $c_0\in H_1(W[T_0])$, and  $c_1\in H_1(W[T_1])$ such that $c_0\succ c_1$. Then for every $t\in [T_0,T_1]$ there exists at least one $c\in H_1(W[t])$ such that $c_0\succ c\succ c_1$.
\end{prop}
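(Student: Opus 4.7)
The plan is to slice the witnessing 2-chain at the intermediate time $t$. Unpack the hypothesis $c_0\succ c_1$: pick simplicial representatives $\gamma_0\in c_0$ (a 1-cycle in $W[T_0]$) and $\gamma_1\in c_1$ (a 1-cycle in $W[T_1]$), together with a 2-chain $\sigma\subset W[T_0,T_1]$ satisfying $\partial\sigma=\gamma_0-\gamma_1$. The closedness condition in Definition \ref{def:Simon_Smith_family}(2) guarantees that the spacetime locus $\{(s,p):p\in\Gamma(s)\}$ is closed in $[T_0,T_1]\x M$, so $W[T_0,T_1]$ is open in $[T_0,T_1]\x M$ and the compact support of $\sigma$ sits at positive distance from this locus.

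For the given $t\in(T_0,T_1)$, I would perturb $\sigma$ by a small boundary supported near the slice $\{t\}\x M$ so as to make it transverse to that slice, while keeping $\sigma$ inside $W[T_0,T_1]$ and, as the finitely many spacetime singularities $(t_i,p_i)$ of Definition \ref{defn:pinch_off} are isolated, avoiding them. Openness of $W[T_0,T_1]$ together with standard general-position arguments (refining triangulations so that $\{t\}\x M$ is a subcomplex) makes this step routine. Set $\gamma:=\sigma\cap(\{t\}\x M)$; this is a simplicial 1-cycle in $W[t]$, and we let $c:=[\gamma]\in H_1(W[t])$.

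Splitting $\sigma$ at the slice yields two 2-chains $\sigma^-:=\sigma\cap([T_0,t]\x M)\subset W[T_0,t]$ and $\sigma^+:=\sigma\cap([t,T_1]\x M)\subset W[t,T_1]$ with $\partial\sigma^-=\gamma_0-\gamma$ and $\partial\sigma^+=\gamma-\gamma_1$, which together witness $c_0\succ c$ and $c\succ c_1$. Here I use the fact that the relation $\succ$ really only requires one pair of representatives to bound: if $\gamma_i'$ differ from $\gamma_i$ by $\partial\tau_i$ with $\tau_i\subset W[T_i]\subset W[T_0,T_1]$, then the 2-chain $\sigma-\tau_0+\tau_1$ still lies in $W[T_0,T_1]$ and has boundary $\gamma_0'-\gamma_1'$, so the "every representative" and "some representative" versions are equivalent, and the same absorption works for $\sigma^\pm$. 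The boundary cases $t=T_0$ or $t=T_1$ are immediate by taking $c=c_0$ or $c=c_1$ and using the trivially verified reflexivity of $\succ$. The only delicate point in the whole argument is the transversality perturbation of $\sigma$ at the slice $\{t\}\x M$; I don't expect this to be a serious obstacle given that $W[T_0,T_1]$ is open and the spacetime singularities are isolated.
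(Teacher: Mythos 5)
Your proposal is correct and follows essentially the same route as the paper: both perturb the witnessing $2$-chain $\sigma$ so that it can be sliced by the time slice $\{t\}\x M$, take $\gamma:=\sigma\cap(\{t\}\x M)$ as the representative of the intermediate class, and split $\sigma$ at that slice to witness $c_0\succ[\gamma]\succ c_1$. Your extra remarks (openness of $W[T_0,T_1]$ via the closedness property, and the equivalence of the ``every representative'' and ``some representative'' formulations of $\succ$ by absorbing chains $\tau_i\subset W[T_i]$) are correct and only make explicit what the paper leaves implicit.
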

Roughly speaking, this says a homology class cannot disappear and then reappear. Later we will see that actually such $c$ is unique.
\begin{proof}
Under our assumption, we can find $\gamma_0\in c_0$ and $\gamma_1\in c_1$ such that they together bound some simplicial $2$-chain $\sigma$ in $W[T_0,T_1]$.  By perturbing $\sigma$ and tilting its faces, we can ensure that each face does not lie entirely within any time slice $  \{t\}\x M$. This enables us to define the 1-cycle $\beta_t := \sigma\cap(\{t\}\x M)$ in $W[t]$ for each $t \in [T_0, T_1]$. Consequently, we have  $c_0 \succ [\beta_t] \succ c_1$.
\end{proof}

\begin{prop}\label{prop_homology_unique}
    Let $c_0\in H_1(W[T_0] )$ and $T_0\leq T_1$. Then there exists at most one  $c_1\in H_1(W[T_1] )$ such that $c_0\succ c_1$.
\end{prop}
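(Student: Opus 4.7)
The first step is to reformulate uniqueness of descent as an injectivity statement. Unpacking the definition of $\succ$, and using that any two representatives of the same class in $H_1(W[T_i])$ differ by a boundary confined to $W[T_i]$, the condition $c_0 \succ c$ is equivalent to the equality $(i_0)_*(c_0) = (i_1)_*(c)$ in $H_1(W[T_0,T_1]; \Z_2)$, where $i_0 \colon W[T_0] \hookrightarrow W[T_0,T_1]$ and $i_1 \colon W[T_1] \hookrightarrow W[T_0,T_1]$ are the time-slice inclusions. Under this reformulation, the proposition is equivalent to injectivity of
$$(i_1)_* \colon H_1(W[T_1]) \longrightarrow H_1(W[T_0,T_1]).$$

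To establish this injectivity, exploit that by Definition \ref{defn:pinch_off} the pinch-off process has only finitely many singular spacetime points $(t_1,p_1),\dots,(t_n,p_n)$ in $(T_0,T_1]\times M$, and is an isotopy away from them. I would argue by induction on $n$. In the base case $n=0$, the isotopy integrates to an ambient diffeomorphism identifying $W[T_0,T_1]$ with the product $[T_0,T_1]\times W[T_1]$; under this identification $i_1$ becomes the standard product inclusion, which is an $H_1$-isomorphism. For the inductive step, pick a regular time $s$ between the last two singular events, and apply the Mayer--Vietoris sequence to the cover $\{W[T_0,s],\,W[s,T_1]\}$ of $W[T_0,T_1]$, whose intersection is $W[s]$. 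Applying the inductive hypothesis to $W[T_0,s]$ yields injectivity of $H_1(W[s]) \hookrightarrow H_1(W[T_0,s])$, which forces the Mayer--Vietoris kernel in $H_1(W[T_0,s])\oplus H_1(W[s,T_1])$ to be supported on the $H_1(W[T_0,s])$ factor; combined with injectivity of $H_1(W[T_1])\hookrightarrow H_1(W[s,T_1])$ over the one-event sub-interval, this yields the desired injectivity of $(i_1)_*$.

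The remaining core task is the single-event case, which in turn reduces to a purely local computation inside a small spacetime tube $J\times U$ around the unique singular point $(t_k,p_k)$: outside this tube the process is still isotopy and contributes a product factor, so it suffices to show that the top-slice complement $W[T_1]\cap(J\times U)$ includes into the full tube complement $W[s,T_1]\cap(J\times U)$ injectively on $H_1$. For a shrinking event, an explicit deformation retraction of the spacetime-tube complement onto its top slice handles the claim. I expect the principal obstacle to be the surgery (neck-pinch) case, where the local surface passes from a cylinder through a double cone to a pair of discs and the local complement regions reorganise non-trivially across the cone point; here one must carry out a direct Mayer--Vietoris calculation, splitting the tube along $t=t_k$ and using the explicit local models on either side of the singular time, to verify that $H_1$ of the local spacetime complement is generated by loops already supported in its top slice. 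Morally, matching the descent analysis for mean curvature flow in \cite{chuSun2023genus}, pinch-off events can only destroy $H_1$-classes of the complement going forward in time, never create new ones; the technical heart of the proof is making this precise at the cone point.
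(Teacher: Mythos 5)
Your reformulation is correct: since representatives of a class differ by boundaries supported in a single time slice, $c_0\succ c_1$ is indeed equivalent to $(i_0)_*c_0=(i_1)_*c_1$ in $H_1(W[T_0,T_1];\Z_2)$, and the proposition is equivalent to injectivity of $(i_1)_*\colon H_1(W[T_1])\to H_1(W[T_0,T_1])$. Your inductive Mayer--Vietoris scheme (split at a regular time, use the inductive hypothesis on $[T_0,s]$ and a one-event statement on $[s,T_1]$, then localize the one-event case to a spacetime tube around the singular point) is a genuinely different organization from the paper's proof, which argues by contradiction from the infimum time $T$ at which uniqueness fails and, at the singular time, repairs a bounding $2$-chain by hand (surgering it along the loops of its intersection with $\partial U$ and discarding the pieces inside $U$, with separate inward/outward and surgery/shrinking cases). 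Your route is cleaner in structure, but note that all the actual content of the paper's proof is concentrated in exactly the step you defer.

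That deferred step is a genuine gap. For your gluing (a four-lemma chase on the Mayer--Vietoris ladder, which you leave implicit) the input you need at the singular tube is \emph{injectivity} of $H_1\bigl(W[T_1]\cap(J\times U)\bigr)\to H_1\bigl(W[s,T_1]\cap(J\times U)\bigr)$, and you state this correctly at first; but your final formulation of what must be verified in the surgery case --- that $H_1$ of the local spacetime complement is \emph{generated} by loops in the top slice --- is a surjectivity statement, which is neither needed nor sufficient, and no argument for the actual injectivity is given. This is precisely where the work lies: e.g.\ when the double cone persists to the final time the local top-slice complement has a $\Z_2$ meridian class and one must show it does not bound in the spacetime tube, which requires either an explicit model computation or a monotonicity/retraction argument of the kind the paper encodes in its chain surgeries. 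Your shrinking-case claim is also false as stated: there is no deformation retraction of the spacetime tube complement onto its top slice, because the interior of the shrinking component at pre-singular times is a component of the tube complement disjoint from the top slice (the only possible passage closes up at the excluded point $(t_k,p_k)$); this happens to be harmless, since there the top-slice local complement is a ball minus a point and has trivial $H_1$, but it shows the local analysis was not actually carried out. Two smaller points: simultaneous singular points at one time break ``pick a regular time between the last two events'' (fixable with disjoint tubes, and the paper makes the same simplifying assumption explicitly), and the base case / ``product factor outside the tube'' uses a patching of the merely local isotopies of Definition \ref{defn:pinch_off} into a global one, which should at least be acknowledged.
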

It is possible that such $c_1$ does not exist. Take Figure \ref{fig:descent} as an example. At the starting time, let $c_0$ be the homology class in the inner region that represents the solid handle on the left. Then it does {\it not} descend to any class at the final time.  Also note, it is based on this proposition that we picked the symbol $\succ$ (instead of $\prec$): It   pictographically reflects that more than one homology class may descend into one, but not the other way around.
\begin{proof}
 We fix the times $T_0$ and $T_1$, and the class $c_0$.
Let $A$ be the set of all $t\in [T_0,T_1]$ for which there are at least two classes, $c,c'\in H_1(W[t])$, such that $c_0\succ c$ and $c_0\succ c'$. Suppose by contradiction that $A$ is non-empty.

Let $T:=\inf A$. Take a sequence $t_1,t_2,...$ in $A$ with $t_i\downarrow T$. Fix an $i$ large enough such that the time interval $(T,t_i]$ contains no singular time for the pinch-off process.

By definition,   we can find 1-cycles $\gamma_0\subset W[T_0]$ and $\gamma_1,\gamma'_1\in W[t_i]$ such that
$\gamma_0$ and $\gamma_1$ bound some simplicial 2-chain $\sigma$ in $W[T_0,T_1]$, and $\gamma_0$ and $\gamma'_1$ also bound some simplicial 2-chain $\sigma'$ in $W[T_0,T_1]$.  For each $t\in [T_0,T_1]$, we define $\beta_t$ as the time slice $\sigma\cap (\{t\}\x M)$  of $\sigma$, and $\beta'_t$ as the time slice $\sigma'\cap (\{t\}\x M)$  of $\sigma'$. By perturbing $\sigma$ and $\sigma'$, we can assume that each $\beta_t,\beta'_t$ is a simplicial 1-cycle. By modifying the 2-chains $\sigma,\sigma'$, we can also assume that the two families of cycles $\{\beta_t\}_t$ and $\{\beta'_t\}_t$ vary smoothly for $t$ in some small neighborhood of $T$. 

We claim that $T\in A$. Indeed, if $T\notin A$, then by the definition of $A$, the cycles $\beta_T$ and $\beta'_T$  must be homologous in $W[T]$, as both $[\beta_T]$ and $[\beta'_T]$ descend from $c_0$. Let $B_T$ be the 2-cycle they bound in $W[T]$. By the closedness property of Simon-Smith family, we can form a smoothly varying family $\{B_t\}_t$ with $B_t\subset W[t]$ and $\partial B_t=\beta_t-\beta'_t$ for all $t\geq T$ sufficiently close to $T$. But since the pinch-off process $\Gamma(t)$ varies smoothly for $t\in (T,t_i]$, we can extend the  family $\{B_t\}_t$ onto the time interval $  [T,t_i]$. This contradicts that $[\gamma'_1]\ne [\gamma_1]$ in $H_1(W[t_i])$. Hence, we know indeed $T\in A$. In particular, $T>T_0$.

Now, fix a time $s<T$  close to $T$ such that within the time period $[s,T)$, the pinch-off process has no singular time. Then since $s\notin A$, we know $\beta_s$ and $\beta'_s$ must be homologous in $W[s]$. Let $C_s$ be a 2-chain they bound in $W[s]$. To derive a contraction, we are going to find a  family $\{C_t\}_{t\in [s,T]}$ (which need not be continuously varying) with $C_t\subset W[t]$ and $\partial C_t=\beta_t-\beta'_t$: This would imply in particular $\beta_T$ and $\beta'_T$ are homologous in $W[T]$, contradicting $T\in A$, which we proved in the last paragraph.

If $T$ is not a singular time for the pinch-off process, such family  $\{C_t\}_{t\in [s,T]}$ clearly exists. So we shall assume $T$ is a singular time. Now, without loss of generality, we can assume that the loop $\gamma_0$  either lies within $W_\ins[T_0]$ or $W_\out[T_0]$ (for the general situation we can consider the components of $\gamma_0$ in the inside and outside regions separately). Let us just do the first case, as the other is analogous. Hence, all $\beta_t,\beta'_t$ must lie in $W_\ins[t]$.

In fact, to simplify presentation, we will assume the pinch-off process has only one singular point $(T,p)$ at time $T$, for it would be straightforward to adapt the argument below to the case of finitely many singular points at time $T$. Now, let us say that the singular point $(T,p)$ is of {\it inward type} if either case holds:
\begin{enumerate}[label=(\alph*)]
    \item This singular point is a neck-pinch point, such the solid cylinder region lies in the inside region.
    \item This singular point corresponds to shrinking some component of the inside region into the point $p$, at time $T$.
\end{enumerate}
Analogously, we can define what ``$(T,p)$ is of {\it outward type}" means. We discuss the inward and outward types separately.

\subsection*{$(T,p)$ is of inward type.} There are two cases, (a) and (b), as stated above. Let us do case (b)  first. Let $[s_1,T]\x U$ be a spacetime neighborhood  within which the pinch-off process takes the form of shrinking some connected component into the point $p$ (as described precisely in Definition \ref{defn:pinch_off}). By choosing this spacetime neighborhood small enough, we can assume that $[s_1,T]\x  U$ avoids $\beta_t,\beta'_t$  for each $t\in [s_1,T]$. Hence, we can first construct the desired family $\{C_t\}$ for $t\in [s,s_1]$ such that,   by the definition of case (b), {\it $C_{s_1}\cap U$ is a union of  $2$-cycles without boundary}. Hence, we can remove these 2-cycles from $C_{s_1}$, and easily extend the family over to the whole time interval $[s,T]$, as the  pinch-off process varies smoothly outside $U$.

Then, we do case (a). Let $U$ be a ball in $M$ around $p$, and $s_1<T$, for   which the neck-pinch takes place in $[s_1,T]\x  U$. Then for $t\in [s_1,T)$, $\Gamma(t)$ is a smooth cylinder. And by choosing this spacetime neighborhood small enough, we can assume that $[s_1,T]\x  U$ avoids $\beta_t,\beta'_t$  for each $t\in [s_1,T]$.   First, we can smoothly vary the 2-chain $C_s$ to obtain a smooth family $\{C_t\}_{t\in [s,s_1]}$   with $C_t\subset W[t]$ and $\partial C_t=\beta_t-\beta'_t$. Now, at time $s_1$, the 2-chain $C_{s_1}$ may intersect $U$. If so, the intersection must be within the solid cylinder region $\{s_1\}\x(\ins(\Gamma(s_1))\cap U)$. 

Now,  $\Gamma(s_1)\cap \partial U$ consists of two loops, and each of them  bounds a disc on the sphere $\partial U$. Let $D\subset\partial U$ be the union of these two discs. We can assume that the cross section $C_{s_1}\cap D$ consists of finitely many loops $\delta_1,\cdots,\delta_m$, with each $\delta_j$  bounding a disc $d_j$ within $D$. By performing a surgery to $C_{s_1}$ along each loop $\delta_j$ (i.e. removing a cylinder near $\delta_j$ and gluing back two approximate copies of $d_j$), we can obtain another 2-chain $\tilde C_{s_1}$ that  avoids the sphere $\partial U$. By removing the components of $\tilde  C_{s_1}$ in $U$, we can assume $\tilde  C_{s_1}$ avoids $U$. As a result, now there is no problem in smoothly deforming the 2-chain $\tilde  C_{s_1}$ in the time interval $[s_1,T]$, and obtain a family $\{\tilde  C_t\}_{t\in [s_1,T]}$ with desired properties. 

\subsection*{$(T,p)$ is of outward type}. If $(T,p)$ is an outward neck-pinch, it is clear from the definition of pinch-off process that we can find a smoothly varying family $\{C_t\}_{t\in [s,T]}$  with $C_t\subset W[t]$ and $\partial C_t=\beta_t-\beta'_t$, as desired. Intuitively speaking, this is because the inside region is  gaining ground. 

If instead $(T,p)$ corresponds to an outward shrinking process, then pick a  spacetime ball $[s_1,T]\x  U$, with $U$ centered at $p$, which avoids $\sigma$ and $\sigma'$. Clearly we can define $C_t$ for $t$ up to $s_1$, but $C_{s_1}$ may intersect $U$. Now, we perform surgery for $C_{s_1}$ along every loop component of $C_{s_1}\cap \partial U$ to make it avoid $\partial U$. Then we discard everything inside $U$, and call the new 2-chain $\tilde C_{s_1}$. Now there is no problem in smoothly deforming the 2-chain $\tilde  C_{s_1}$ in the time interval $[s_1,T]$, and obtain a family $\{\tilde  C_t\}_{t\in [s_1,T]}$ with desired properties.  \end{proof}

Hence, based on the  previous two propositions, we have:
\begin{cor}\label{prop_unique_clss_inbetween}
    Let $T_0<T_1$, $c_0\in H_1(W[T_0])$, and  $c_1\in H_1(W[T_1])$ such that $c_0\succ c_1$. Then for every $t\in [T_0,T_1]$ there exists a \ul{unique} $c\in H_1(W[t])$ such that $c_0\succ c\succ c_1$.
\end{cor}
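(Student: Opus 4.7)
The plan is to deduce this corollary by directly stitching together the two propositions that immediately precede it: existence comes from Proposition \ref{prop_at_least_one} applied to the sub-interval ending at $t$ (or, more precisely, the intermediate-time strengthening just before this corollary), and uniqueness comes from Proposition \ref{prop_homology_unique}. The bulk of the technical work — tracking 2-chains across neck-pinch and shrinking singular times — has already been absorbed into the proofs of those two propositions, so here I only need to assemble them.

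Concretely, first I would invoke the preceding existence proposition to obtain, for each $t \in [T_0, T_1]$, at least one class $c \in H_1(W[t])$ satisfying $c_0 \succ c \succ c_1$. For uniqueness, I would suppose that $c, c' \in H_1(W[t])$ both satisfy $c_0 \succ c \succ c_1$ and $c_0 \succ c' \succ c_1$; in particular, both satisfy $c_0 \succ c$ and $c_0 \succ c'$. Since $T_0 \leq t$, I would then apply Proposition \ref{prop_homology_unique} to the pair of times $(T_0, t)$, which asserts that at most one class in $H_1(W[t])$ descends from $c_0$. This forces $c = c'$.

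The step that deserves a sentence of justification is simply that the hypothesis $T_0 \leq T_1$ in Proposition \ref{prop_homology_unique} is arbitrary, so applying it at the intermediate time $t$ in place of $T_1$ is legitimate. Note that the condition $c \succ c_1$ plays no role in the uniqueness argument; it is only the half-hypothesis $c_0 \succ c$ that is used, and it is only needed to ensure $c$ lies in the strict domain of Proposition \ref{prop_homology_unique}. There is no real obstacle to overcome: the corollary is essentially a packaging statement that reformulates the preceding existence and uniqueness results as an assertion that $\succ$ interpolates uniquely between any two $\succ$-comparable classes.
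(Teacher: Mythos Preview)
Your proposal is correct and matches the paper's approach exactly: the paper simply says ``Hence, based on the previous two propositions, we have:'' and states the corollary, so existence is drawn from the intermediate-time existence proposition and uniqueness from Proposition~\ref{prop_homology_unique} applied at time $t$, precisely as you describe.
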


Based on the above corollary, we introduce the following definition.
\begin{defn}[Homology termination]\label{defn_termination}
    Let $c_0\in H_{1}(W[T_0])$. Suppose the set
    $$\{t\in (T_0,T]:\textrm{there is no } c\in H_1(W[t]) \textrm{ such that } c_0\succ c\}$$
    is non-empty 
    (in which case this set must be an interval by the above corollary). Then we denote by $\frak t(c_0)$ the infimum of this set, and say that $c_0$ {\it terminates at time $\frak t(c_0)$}.    If instead the above set is empty, then we say {\it $c_0$ never terminates.} 
\end{defn}

\begin{rmk}\label{rmk:maxExistence}
    By the openness of the region $W[0,T]$, we know that if $c_0$ does terminate (i.e. the set above is non-empty). then  there can be no $c\in H_1(W[\frak t(c_0)])$ such that $c_0\succ c$. Therefore, one can interpret the time interval $[T_0,\mathfrak{t}(c_0))$ as the ``maximal interval of existence'' for $c_0$.
\end{rmk}

Finally, we introduce the following definition.
\begin{defn}
    Let $\{\Gamma(t)\}_{t\in [0,T]}$ be a pinch-off process. For each $t\in [0,T]$, we define\footnote{$\fb$ stands for backtrack.} $\fb^\Gamma(t)$ to be the subset of $H_1(W[0])$ that consists of all elements $c\in H_1(W[0])$ that descends to some element of $H_1(W[t])$ (i.e. $c$ has not terminated by time $t$).
\end{defn}

Note that $\fb^\Gamma(t)$ must be a subgroup of $H_1(W[0])$. Indeed, suppose we know $c_0,c'_0\in H_1(W[0])$  descend to respectively $c_t,c'_t\in H_1(W[t])$,  then we know that $c_0+c'_0$ must descend to  $c_t+c'_t\in H_1(W[t])$. In addition, we denote
$$\fb^\Gamma_\ins(t):=\fb^\Gamma(t)\cap H_1(W_\ins[0]),\quad \fb^\Gamma_\out(t):=\fb^\Gamma(t)\cap H_1(W_\out[0]).$$
By identifying each $M\backslash(\Gamma(t))$ with $W[t]=\{t\}\x\Gamma(t)$, we can as well assume $\fb^\Gamma_\ins(t)$ is a subspace of $H_1(\ins(\Gamma(0)))$, and similarly for $\fb^\Gamma_\out(t).$ 
Note,  both maps
$$\fb^\Gamma_\ins:[0,1]\to \Gr(H_1(\ins(\Gamma(0))))\quad\textrm{ and }\quad\fb^\Gamma_\out:[0,1]\to \Gr(H_1(\out(\Gamma(0))))$$ are  continuous under the topology we equip on Grassmannians in \S \ref{sect:subspaceOfGr} by Remark \ref{rmk:maxExistence}. 

\subsection{Homology descent and Grassmannians on family}
Now, let us unify the previous two subsections. Suppose we are given a Simon-Smith family $\Phi:X\to \cS(M)$, and a deformation via pinch-off processes for it, $H:[0,1]\x X\to \cS(M)$. This means:
\begin{itemize}
    \item $H(0,\cdot)=\Phi$.
    \item For each $x$, $H(\cdot, x)$ is a pinch-off process.
\end{itemize}
Hence, we can define a map$$\fb^H_\ins:[0,1]\x X\to \fGr_\ins(\Phi)$$
by sending each $(t,x)$ to $(x, \fb^{H(\cdot,x)}_\ins(t))$. Note $\fb^{H(\cdot,x)}_\ins(t)$ is an element in the Grassmannian $\Gr(H_1(\ins(\Phi(x))))$. Analogously, we can define a map
$$\fb^H_\out:[0,1]\x X\to \fGr_\out(\Phi).$$
Again, it is straightforward to check that both maps are continuous. Namely, take any open set $\frak U(x_0,\{\gamma_i\}_i,U)$ in $\fGr_\ins(\Phi)$, and suppose $\frak b^H_\ins(t_1,x_1)$ is a point it in. Then $x_1\in U$,   $\gamma_i\subset M\backslash\Phi(x_1)$ for every $i$, and $[\gamma_i]$ has not terminated yet by time $t_1$ regarding the pinch-off process $H(\cdot,x_1)$. It is then easy to see that the same property must also hold for any $(t,x)$ close enough to $(t_1,x_1)$, using the closedness property of  Simon-Smith family for $H$.

\subsection{A fact on linking number}
Finally, before we  end this section, let us record the following fact about homology groups of complements of individual elements in $\cS(S^3)$.

Let $S\in\cS(S^3)$, and we fix an inside direction for it. We can define a bilinear form $$L:H_1(\ins(S))\x H_1(\out(S))\to\Z_2$$ using  the linking number  for $1$-cycles. We first state a standard fact for smooth embedded surfaces, whose proof is included in  Appendix \ref{appendix:Linking}.
\begin{lem}\label{lem:alexanderSmooth}
Let $S$ be a smooth, orientable genus $g$ surface embedded in $S^3$. Then $\rank(L)=\fg(S)$.
\end{lem}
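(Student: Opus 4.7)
Write $U = \ins(S)$ and $V = \out(S)$, so $\overline U, \overline V$ are compact orientable $3$-manifolds in $S^3$ with common boundary $S$. The plan is to combine a dimension count via Mayer-Vietoris with a non-degeneracy argument for $L$, reducing linking to an intersection pairing to which Poincar\'e-Lefschetz duality can be applied. All homology is with $\Z_2$-coefficients throughout.

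\emph{Step 1 (Dimensions via Mayer-Vietoris).} Taking small open thickenings $A \supset \overline U$ and $B \supset \overline V$ with $A \cap B$ a collar neighborhood of $S$, the Mayer-Vietoris sequence for $S^3 = A \cup B$ collapses, using $H_1(S^3) = H_2(S^3) = 0$, to
$$0 \longrightarrow H_1(S) \longrightarrow H_1(U) \oplus H_1(V) \longrightarrow 0.$$
Hence $\dim H_1(U) + \dim H_1(V) = \dim H_1(S) = 2g$.

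\emph{Step 2 (Non-degeneracy of $L$).} Suppose $\alpha \in H_1(U)$ satisfies $L(\alpha, \beta) = 0$ for every $\beta \in H_1(V)$; the goal is to conclude $\alpha = 0$. Represent $\alpha$ by a loop $C \subset U$, and pick a simplicial $2$-chain $D \subset S^3$ with $\partial D = C$ (possible since $H_1(S^3) = 0$); after a generic perturbation $D$ is transverse to $S$. Split $D = D_U + D_V$ with $D_U := D \cap \overline U$, $D_V := D \cap \overline V$, and set $\delta := D \cap S$. Since $C \subset U$ is disjoint from $\overline V$, we have $\partial D_V = \delta$, and $[D_V] \in H_2(\overline V, S)$. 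For any loop $\beta \subset V$,
$$L(\alpha, \beta) = \#(D \cap \beta) = \#(D_V \cap \beta) \bmod 2 = \langle [D_V], [\beta] \rangle,$$
the right-hand side being the intersection pairing $H_2(\overline V, S) \otimes H_1(\overline V) \to \Z_2$. Vanishing for every $\beta$ thus forces $[D_V]$ to lie in the annihilator of $H_1(\overline V)$; by Poincar\'e-Lefschetz duality on the compact orientable $3$-manifold-with-boundary $\overline V$ this pairing is perfect, so $[D_V] = 0$ in $H_2(\overline V, S)$. Writing $D_V = \partial E + F$ for some 3-chain $E \subset \overline V$ and 2-chain $F \subset S$, the 2-chain $D - \partial E = D_U + F$ is then entirely contained in $\overline U$ and has boundary $C$, so $\alpha = [C] = 0$ in $H_1(U)$. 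The symmetric argument swapping the roles of $U$ and $V$ proves non-degeneracy in the other variable.

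\emph{Step 3 (Conclusion).} Non-degeneracy of $L$ gives injections $H_1(U) \hookrightarrow H_1(V)^*$ and $H_1(V) \hookrightarrow H_1(U)^*$, hence $\dim H_1(U) = \dim H_1(V)$. Combined with Step 1, both equal $g$, and $\rank(L) = g$.

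The main obstacle is Step 2: rewriting the mod-$2$ linking number as a mod-$2$ intersection pairing on the compact manifold-with-boundary $\overline V$, so that Poincar\'e-Lefschetz duality pins down when this pairing is identically zero. Once that translation is made, the argument is essentially formal.
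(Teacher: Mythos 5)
Your proof is correct, and it takes a genuinely different route from the paper's. The paper first computes $H_1(\ins(S))\cong H_1(\out(S))\cong \Z_2^g$ by combining Alexander duality ($H^2(\ins(S))\cong \tilde H_0(\out(S))=0$), Poincar\'e--Lefschetz duality, the long exact sequence of the pair $(\ins(S),S)$, and the ``half lives, half dies'' theorem; it then quotes the classical fact that the Alexander duality isomorphism $H_1(\ins(S))\cong H^1(\out(S))$ is induced by the linking form, so that $L$ is a perfect pairing of two $g$-dimensional spaces. You instead obtain only the dimension count $\dim H_1(U)+\dim H_1(V)=2g$ from Mayer--Vietoris, and you prove non-degeneracy of $L$ by hand: rewriting the linking number of $\alpha$ against classes in $V$ as the intersection pairing of the relative class $[D_V]\in H_2(\overline V,S)$ with $H_1(\overline V)$, invoking Lefschetz duality (with $\Z_2$-coefficients) to conclude $[D_V]=0$, and then pushing the bounding chain into $\overline U$; the equality of the two dimensions then falls out of non-degeneracy rather than being computed separately. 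What your route buys is self-containedness: you never need ``half lives, half dies,'' nor the (cited-without-proof) statement that Alexander duality is realized by linking numbers, at the cost of a somewhat longer chain-level argument. Two small points you should make explicit: a $\Z_2$-homology class may require a multi-component $1$-cycle rather than a single loop (harmless for the argument), and your final bounding chain $D_U+F$ lies in $\overline U$, so you first get $[C]=0$ in $H_1(\overline U)$ and should pass to $H_1(U)$ via the standard homotopy equivalence between the open region and its compact closure (collar of $S$).
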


Now, we adapt this fact for singular surfaces:

\begin{lem}\label{lem:alexander}
  For any $S\in\cS(S^3)$,    $\rank L=\fg(S).$
\end{lem}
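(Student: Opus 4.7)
The plan is to approximate $S$ by a smooth closed embedded surface of the same genus and reduce to Lemma \ref{lem:alexanderSmooth}. Fix a generic small $r>0$ so that each sphere $\partial B_r(p)$ meets $S$ transversely for every $p\in P:=S_{\sing}$, and set $S_r:=S\backslash \bigcup_{p\in P}B_r(p)$. Then $S_r$ is a smooth, compact, orientable surface with boundary consisting of finitely many disjoint smooth circles on the spheres $\partial B_r(p)$, and $\fg(S_r)=\fg(S)$ for small $r$ by Definition \ref{def:genus}. I would then cap off each boundary circle of $S_r$ with a smoothly embedded disk pushed slightly into the corresponding $B_r(p)$, choosing the disks to be pairwise disjoint; this produces a smooth closed orientable surface $\tilde S_r\subset S^3$ with $\fg(\tilde S_r)=\fg(S_r)=\fg(S)$. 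By Lemma \ref{lem:alexanderSmooth}, the linking form $\tilde L$ for $\tilde S_r$ has rank $\fg(S)$, so it suffices to prove $\rank L=\rank \tilde L$.

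To compare the two forms, observe that $S$ and $\tilde S_r$ agree outside $N_r:=\bigcup_p B_r(p)$, so $\ins(S)\backslash N_r=\ins(\tilde S_r)\backslash N_r$, and similarly for the outside; denote these common regions by $W_{\ins}$ and $W_{\out}$. Every 1-cycle in $\ins(S)$ can, after a small perturbation within $\ins(S)$, be assumed to lie in $W_{\ins}$, and likewise for the outside. This gives natural maps $\Phi_{\ins}:H_1(\ins(S))\to H_1(\ins(\tilde S_r))$ and $\Phi_{\out}:H_1(\out(S))\to H_1(\out(\tilde S_r))$. Since representative cycles chosen in $W_{\ins}$ and $W_{\out}$ are disjoint from $S\cup \tilde S_r$, their linking number in $S^3$ is well-defined and does not depend on whether it is being computed with respect to $S$ or $\tilde S_r$; hence $\tilde L(\Phi_{\ins}(\alpha),\Phi_{\out}(\beta))=L(\alpha,\beta)$ for all $\alpha,\beta$.

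I would then show that $\Phi_{\ins}$ and $\Phi_{\out}$ are isomorphisms via a Mayer-Vietoris argument. For each $p\in P$ and generic small $r$, the circles $S\cap\partial B_r(p)$ partition $\partial B_r(p)$ into finitely many open disk regions, and each connected component of $B_r(p)\backslash S$ is cone-like over its trace on $\partial B_r(p)$, hence contractible; so $\ins(S)\cap B_r(p)$ is a finite disjoint union of contractible open sets (with the trivial modification of ignoring any isolated point of $S$, which does not affect $H_1$ with $\Z_2$ coefficients). The same contractibility holds for $\ins(\tilde S_r)\cap B_r(p)$ since the capping disks are themselves embedded disks in $B_r(p)$. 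Applying Mayer-Vietoris to the decompositions $\ins(S)=W_{\ins}\cup (\ins(S)\cap N_r)$ and $\ins(\tilde S_r)=W_{\ins}\cup(\ins(\tilde S_r)\cap N_r)$ shows that both inclusions $W_{\ins}\hookrightarrow \ins(S)$ and $W_{\ins}\hookrightarrow \ins(\tilde S_r)$ induce isomorphisms on $H_1(\,\cdot\,;\Z_2)$, so $\Phi_{\ins}$ is an isomorphism; the argument for $\Phi_{\out}$ is symmetric. Combined with the previous paragraph, this gives $\rank L=\rank \tilde L=\fg(S)$.

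The main obstacle I expect is the local analysis at singular points, namely verifying that for generic arbitrarily small $r$ the sphere $\partial B_r(p)$ meets $S$ transversely in a finite disjoint union of smooth circles cutting $\partial B_r(p)$ into disk regions, with each component of $B_r(p)\backslash S$ contractible. This relies on the regularity condition in Definition \ref{def:punctate_surf} that $S\backslash P$ is a smooth embedded surface of finite area, together with a Sard-type genericity argument for $r$; a blow-up / tangent cone analysis at $p$ should provide the required cone-like structure on sufficiently small scales.
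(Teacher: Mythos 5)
Your surface modification (excise small balls around the singular points and cap the resulting boundary circles with disjoint discs) is essentially the paper's construction of the smooth comparison surface, and the observation that linking numbers of cycles disjoint from both surfaces are computed in $S^3$ and hence agree is also used there. The gap is in the homological comparison. First, Definition \ref{def:punctate_surf} gives no structure whatsoever at a singular point beyond smoothness of $S\setminus P$ and finite area: there is no tangent cone or ``cone-like'' behavior to blow up, so the claim that the components of $B_r(p)\setminus S$ are contractible has no basis — and it is false even in the model case of a neck-pinch singularity, where $S\cap B_r(p)$ is a double cone and the outer component of $B_r(p)\setminus S$ is homotopy equivalent to a circle. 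Second, and more seriously, the maps $\Phi_{\ins},\Phi_{\out}$ are neither well defined as you describe (a $1$-cycle in $\ins(S)$ supported very close to a singular point cannot in general be pushed off a \emph{fixed} neighborhood $N_r$ within $\ins(S)$) nor isomorphisms. Concretely, take $S$ to be a pinched torus (a torus with one meridian collapsed), which lies in $\cS(S^3)$ and has $\fg(S)=0$ by Definition \ref{def:genus}; your capped surface $\tilde S_r$ is a smooth sphere, yet $H_1(\out(S);\Z_2)\cong\Z_2$ (the loop through the pinched hole) while $H_1(\out(\tilde S_r);\Z_2)=0$. So the inclusions of $W_{\out}$ into the two outside regions cannot both induce isomorphisms, and Mayer--Vietoris cannot rescue this: even with contractible pieces it only gives exactness, and classes of $W_{\out}$ may die in the capped complement (bounding discs through $N_r$) while new classes may appear via the connecting map.

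The paper sidesteps all of this by never comparing the full groups $H_1$ of the complementary regions. For the inequality $\fg(S)\geq\operatorname{rank} L$ it fixes finitely many cycles $\alpha_i,\beta_j$ realizing the rank of $L$ \emph{first}, and only then chooses the balls around the singular points small enough to miss these compact cycles; the same cycles then sit in the complement of the smoothed surface with the same pairing matrix, and Lemma \ref{lem:alexanderSmooth} applies. For the reverse inequality it takes a dual basis for the smooth surface and pushes it off the balls, which is legitimate precisely because the \emph{modified} surface meets each ball only in discs (so the ball-complement components there are balls) — the analogous pushing-off for the original singular $S$, which your argument needs, is exactly what can fail. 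If you reorganize your proof along these lines (cycles first, then balls, plus the one-sided pushing-off for $\tilde S_r$ only), your construction of $\tilde S_r$ goes through; as written, the isomorphism step is a genuine gap. You should also address the case where $\operatorname{rank} L$ or $\fg(S)$ is infinite, which the paper treats separately.
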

\begin{proof}
    We first assume $n:=\rank L$ and $\fg(S)$ are both finite. Then we can find  1-cycles $\alpha_1,...,\alpha_n\subset \ins(S)$ and $\beta_1,...,\beta_n\subset \out(S)$  such that $L(\alpha_i,\beta_j)=\delta_{ij}$. Now, choose finitely many balls $B_1,...,B_k$ covering the singularities of $S$ that are not isolated points, with each $\partial B_i$ intersecting $S$ transversely,  such that $S\backslash \cup_{i}B_i$ is a smooth surface with boundary of genus $\fg(S)$. We can perform surgeries on $S$ along the finitely many  loop components of $(\cup_i\partial B_i)\cap S$ (i.e. for each such loop, we remove a thin cylinder in $S$, and glue back two discs to $S$, each looks like the disc on $\partial B_i$ bounded by that loop), such that the resulting surface $S'$ avoids every $\partial B_i$.  Now, remove all components inside the balls $B_i$, and call the resulting surface  $S''$. With some smoothening, we can assume $S''$ a smooth surface of  genus $\fg(S)$. By choosing the balls $B_i$ small, we can also assume that each $B_i$ avoids all loops $\alpha_i,\beta_j$.  Then $\alpha_i\subset\ins(S'')$ and $\beta_j\subset\out(S'')$. Hence, it follows Lemma \ref{lem:alexanderSmooth}that $\fg(S'')\geq n$, and so $\fg(S)\geq n$.

    To prove $g:=\fg(S)\leq \rank(S)$, we pick 1-cycles $\alpha''_1,...,\alpha''_g\subset\ins(S'')$ and $\beta''_1,...,\beta''_g\subset\out(S'')$ such that $L(\alpha''_i,\beta''_j)=\delta_{ij}$: This can be done because of Lemma \ref{lem:alexanderSmooth}. By deforming the loops, we can assume they all avoid the balls $B_i$. Hence, if we reverse the process described above, and obtain $S$ back from $S''$, we can also assume each $\alpha''_i\subset\ins(S)$ and $\beta''_j\subset \out(S)$. This shows $\rank (S)\geq g$. So we have proven $\rank(S)=\fg (S)$.
    
   Finally, using a similar argument, we can show that if one of $\rank(L)$ and $\fg(S)$ is infinite, then so is the other.
\end{proof}

\section{Proof of Theorem \ref{thm:mainTopo}}\label{sect:ProofTopo}

In this section, we prove Theorem \ref{thm:mainTopo}, with the proof of several important ingredients  postponed to later sections. Denote by $S^3$ the unit sphere in $\R^4$, and fix a positive integer $g$. We need to  construct a  Simon-Smith family $\Psi:Y\to \cS_{\leq g}(S^3)$ that cannot be deformed via pinch-off process to become a map into $\cS_{\leq g-1} (S^3)$ 

\subsection{A $(2g+3)$-parameter family $\Psi$}\label{sect:2g+3} The set $S^3$ is defined by the equation $x_1^2+x_2^2+x_3^2+x_4^2=1$ in $\R^4$. Let us first introduce a new   coordinate system on $S^3$. Namely,  let $\bD$ denote the closed unit disc in $\R^2$, and    let $C\subset S^3$ be the great circle given by $x_3=x_4=0$. For each $x=(x_1,x_2,x_3,x_4)\in S^3\backslash C$, we can write $$(x_1,x_2,x_3,x_4)=(x_1,x_2,\sqrt{1-x_1^2-x_2^2}\cos\alpha,\sqrt{1-x_1^2-x_2^2}\sin\alpha)$$
with $(x_1,x_2)$ in the open unit disc $\inte(\bD)$ and $\alpha\in S^1:=\R/2\pi\Z$. Below, we will often use the coordinate $(x_1,x_2,\alpha)$ for $S^3\backslash C$. Let $C^\perp$ be the great circle in $S^3$ given by $x_1=x_2=0$. Moreover, we let $B^n\subset\R^n$ be the closed unit $n$-ball.

We first define a  map $\Phi_5:\RP^5\to\cZ_2(S^3;\Z_2)$.
For each $a=[a_0:a_1:...:a_5]\in\RP^5$, we define $ \Phi_5(a)\subset S^3$ to be the zero set  given by 
\begin{align*} a_0+ a_1x_1+ a_2x_2+ a_3x_3+ a_4x_4+ a_5x_1x_2=0.
\end{align*}
While this map is $\bF$-continuous, it is however not a Simon-Smith family.  Namely, for every $a$ in the set
$$A_{\sing}:= \{[a_1a_2 :a_1 :a_2 :0 :0 :1] \in\RP^5: a_1^2 + a_2^2 < 1\}\,,$$  $\Phi_5(a)$ is given by the zero set $(x_1+a_2)(x_2+a_1)=0$, which is the union of two round (but not necessarily equatorial) spheres that intersect transversely along the circle $\{x\in S^3: x_1=-a_2, x_2=-a_1\}$. Such singularities are not allowed in a Simon-Smith family. Thus, we need to desingularize the intersecting curves. In fact,   we will introduce a $B^{2g-2}$-family of ways to desingularize this family $\Phi_5$.

 Recall $\bD$ is the closed unit disc in $\R^2$. Note the map $ \textrm{int}(\bD)\to A_{\sing}$ defined by  $(a_1,a_2)\mapsto [a_1a_2:a_1:a_2:0:0:1]$ naturally gives a coordinate system for $A_{\sing}$.
 
For simplicity, let $C(a_1,a_2)\subset S^3$  denote the circle $\{x_1=-a_2,x_2=-a_1\}$. Note $C(0,0)=C^\perp$. Let $\eta:[0,1]\to[0,1]$ be a smooth non-increasing function that is  zero   at $1$, and positive but small  in $[0,1)$, to be specified. For any $(a_1,a_2)\in \mathrm{int}(\bD)$, we define the sets
\begin{equation}\label{eq:N1}
    N_1(a_1,a_2):=\{(x_1,x_2)\in\R^2:\|(x_1,x_2)+(a_2,a_1)\|\leq \eta(a_1^2+a_2^2)\}\x (\R/2\pi\Z)
\end{equation}
and
$$N_2(a_1,a_2):=\{(x_1,x_2)\in\R^2:\eta(a_1^2+a_2^2)\leq \|(x_1,x_2)+(a_2,a_1)\|\leq 2\eta(a_1^2+a_2^2)\}\x  (\R/2\pi\Z).$$
Here $\|\cdot\|$ denotes the Euclidean norm.
In fact, using the coordinate system $(x_1,x_2,\alpha)$ for $S^3\backslash C$, we will usually  view these two sets as subsets of $S^3$. Then $N_1(a_1,a_2)$ is a  solid torus   neighborhood of the circle $C(a_1,a_2)$, assuming $\eta$ is small enough.

    Let $\epsilon_1:[0,1]\to[0,1]$ be a smooth non-increasing function that is  zero  at $1$, and positive but small (depending on $\eta$) in $[0,1)$, to be specified.
Define two (neither open nor closed) subsets $A_1,A_2$ in $\RP^5$ as follows (Figure \ref{fig:a2}):
    \begin{align*}
        A_1:=\{[a_0:a_1:a_2:a_3:a_4:1]:&\;\|(a_0-a_1a_2,a_3,a_4)\|\leq \epsilon_1(a_1^2+a_2^2),(a_1,a_2)\in\textrm{int}(\bD)\},\\
A_2:=\{[a_0:a_1:a_2:a_3:a_4:1]:&\;\epsilon_1(a_1^2+a_2^2)\leq\|(a_0-a_1a_2,a_3,a_4)\|\leq 2\epsilon_1(a_1^2+a_2^2),\\&\;(a_1,a_2)\in\inte(\bD)\}.
    \end{align*}
Again $\|\cdot\|$ just denotes the Euclidean norm.  Note  $ A_{\sing}\subset A_1\subset A_2$, but   $\partial A_{\sing} $  is not a subset of $A_1$.
Geometrically, $A_1$ is like a tabular neighborhood of $A_{\sing},$ but the width of the neighborhood goes to 0 as $a\in A_{\sing}$ approaches $\partial A_{\sing}$.
 
 \begin{figure}[h]
        \centering
        \makebox[\textwidth][c]{\includegraphics[width=2in]{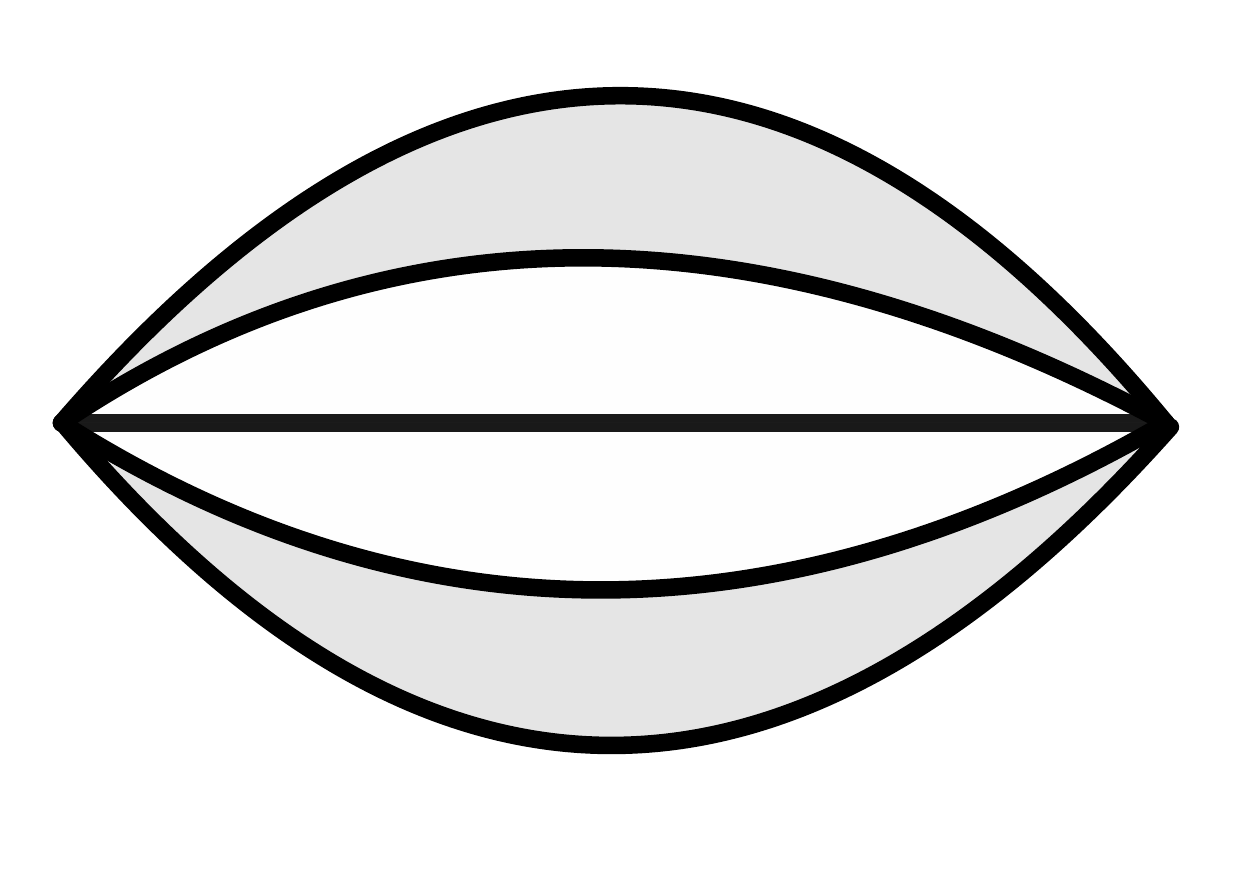}}
        \caption{The horizontal line, with the endpoints removed, is $A_{\sing}$. The white region is $A_1$ and the shaded region is $A_2$.}
        \label{fig:a2}
    \end{figure} 
 
Let $\psi:(A_1\cup A_2) \x S^3\to[0,1]$ be a smooth function such that (see Figure \ref{fig:psi}):
\begin{itemize}
    \item For each $a\in A_1  $ and $x\in  N_1(a_1,a_2)$, $\psi(a,x)=1$.
    \item For each $a\in\partial A_2\backslash\partial A_1$ (which does not include $A_{\sing}$) and $x\in \overline{S^3\backslash N_2(a_1,a_2)}$, $\psi(a,x)=0$.
\end{itemize}

  \begin{figure}[h]
        \centering
        \makebox[\textwidth][c]{\includegraphics[width=2in]{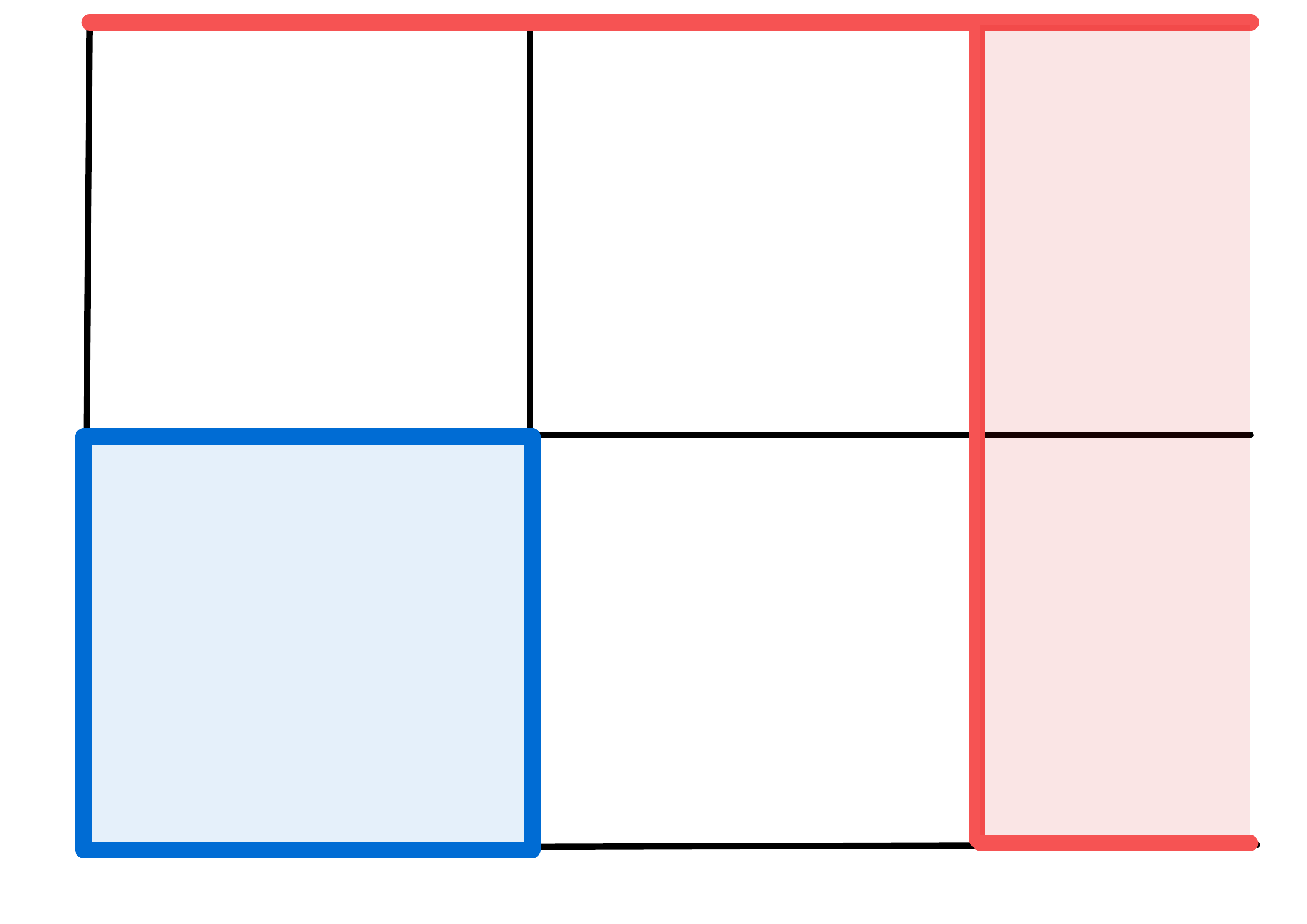}}
        \caption{Withing the left edge, the top segment is $A_2$ while the bottom segment is $A_1$. Within the bottom edge, the left segment is $N_1$, the middle is $N_2$, while the right segment is $\overline{S^3\backslash (N_1\cup N_2)}$. Then the function $\psi$ is 1 on the blue part, and 0 on the red part.}
        \label{fig:psi}
    \end{figure}
  
We first define a map $\hat\Phi_5: \RP^5 \to \cS_0(S^3)$ by modifying $\Phi_5$. 
\begin{itemize}
    \item For $a\in \overline{\RP^5\backslash A_2}$, we just define $\hat\Phi_5(a):=\Phi_5(a)$.
    \item For $a\in A_1\cup A_2 $, we  define $\hat\Phi_5(a)$ to be the zero set of 
\begin{align*}
    &x_1x_2+a_0+a_1x_1+a_2x_2+(1-\psi(a,x))(a_3x_3+a_4x_4)\\
    &+\psi(a,x)\sqrt{1-a_1^2-a_2^2}(a_3\cos\alpha+a_4\sin\alpha).
\end{align*}
\end{itemize}
  
At first glance, it would appear that this family is not continuous near $\partial A_{\sing}$ (which is outside $A_1,A_2$). But as we shall see this is not the case because, as $a\in A_{\sing}$ approaches  $\partial A_{\sing}$, the singular circle $C(a_1,a_2)$ would be very small so that, roughly speaking, the apparent discontinuity would all get pushed into a single point in $S^3$, as allowed in a Simon-Smith family.
\begin{rmk}
Since the functions $\epsilon_1,\eta$ are small, we will see in \S \ref{sect:descrption} that actually $\hat \Phi_5$ is still $\bF$-close to $\Phi_5$ and are ``topologically the same". We defined $\hat\Phi_5$ such that for    $a\in A_1$, and within the solid torus $N_1(a_1,a_2)$,  the set $\hat\Phi_5(a)$ is the zero set of
\begin{equation}\label{eq:boundaryA1}
    x_1x_2+a_0+a_1x_1+a_2x_2+ \sqrt{1-a_1^2-a_2^2}(a_3\cos\alpha+a_4\sin\alpha)=0.
\end{equation} This feature will be convenient for us below to further modify the family, and desingularize the circle of intersection $C(a_1,a_2)$.
\end{rmk}

We will describe a $B^{2g-2}$-family of ways to desingularize the intersection circles $C(a_1,a_2)$. Namely, we are going to define a map $$\Psi:\RP^5\x  B^{2g-2}\to\cS_{\leq g}(S^3).$$
We first let
$${a}=[a_0:a_1:...:a_5]\in\RP^5\quad\textrm{ and }\quad {b}=(b_{2},b'_{2},b_3,b'_3,...,b_{g},b'_g)\in  B^{2g-2}.$$
Then we define $\Psi$ as follows.
\begin{itemize}
    \item For $(a,b)\in \overline{\RP^5\backslash A_1}\x B^{2g-2}$, we just define $\Psi(a,b):=\hat\Phi_5(a).$
    \item For $(a,b)\in  A_1 \x B^{2g-2}$, we define $\Psi(a,b)$ as the union two parts: For the part $\Psi(a,b)\cap \overline{S^3\backslash N_1(a_1,a_2)}$, we define it to be the same as $\hat\Phi_5(a)\cap \overline{S^3\backslash N_1(a_1,a_2)}$.
    \item As for the part $\Psi(a,b)\cap  N_1(a_1,a_2)$, we define it to be the zero set of
    \begin{align}\label{eq:bigExpression}       &\;\;x_1x_2+a_0+a_1x_1+a_2x_2+ \sqrt{1-a_1^2-a_2^2}(a_3\cos\alpha+a_4\sin\alpha)\\
        \nonumber+&\;\epsilon_2(a_1^2+a_2^2)\cdot (\epsilon_1(a_1^2+a_2^2)-\|(a_0-a_1a_2,a_3,a_4)\|)\;\cdot\\
\nonumber&\left[b_2\cos2\alpha+b'_2\sin2\alpha+...+b_g\cos g\alpha+b'_g\sin g\alpha+(1-\|b\|)\cos(g+1)\alpha\right],
    \end{align}
    where $\epsilon_2:[0,1]\to(0,1]$ is some small (depending $\epsilon_1,\eta$) but positive smooth function to be specified. Note, for $a\in \partial A_1\backslash A_{\sing}$, the term $\epsilon_1(a_1^2+a_2^2)-\|(a_0-a_1a_2,a_3,a_4)\|$ is zero, so the above zero set would agree with the one given by (\ref{eq:boundaryA1}).
\end{itemize}
\begin{rmk}
    Here is how we   may  think about the dependence of the expression (\ref{eq:bigExpression}) on $(a,b)$. For simplicity, let us focus on the 3-dimensional ball 
    $$A^0_1:=\{a\in A_1:a_1=a_2=0\}\subset A_1,$$ and consider only $(a,b)\in A^0_1\x B^{2g-2}.$

    As $b$ moves from the center $0\in B^{2g-2}$ towards the boundary $\partial B^{2g-2}$, we see, inside the square bracket of (\ref{eq:bigExpression}), an interpolation from $\cos(g+1)\alpha$ to a trigonometric polynomial of degree $\leq g$.

    Now, let us fix $b$. Then as $a$ moves from the center of $A^0_1$ towards the boundary $\partial A^0_1$, we are interpolating from a trigonomtric polynomial of degree $\leq g+1$, to one of degree $\leq 1$ (so that (\ref{eq:bigExpression}) matches with  (\ref{eq:boundaryA1})).  
\end{rmk}

\begin{thm}\label{prop:Psi1} Fix $g\geq 1$. With the functions $\eta,\epsilon_1,\epsilon_2$   chosen suitably small, the above gives a well-defined Simon-Smith family
$$\Psi:\RP^5\x B^{2g-2}\to\cS_{\leq g} (S^3)$$ with the following properties.
    \begin{enumerate}
    \item\label{Item:Family_Genus0}   $\Psi|_{\overline{\RP^5\backslash A_1}\x B^{2g-2}}$ is of genus $0$.
    \item\label{Item:Family_5-sweepout}   $\Psi|_{\RP^5\x \{0\}}$ is a $5$-sweepout in the Almgren-Pitts sense.
    \item\label{Item:Family_Genus1}  $\Psi|_{\RP^5\x \partial B^{2g-2}}$ is of genus $\leq g-1$.
    \item\label{item:topoSame} For any point $c=( c_1,c_2)\in \inte(\bD)$, we consider the closed $3$-dimensional ball
    $$A^{c}_1:=\{a\in A_1:(a_1,a_2)=(c_1,c_2)\},$$
    and denote by $\inte(A^c_1)$ its interior. Then 
    the $(2g+1)$-parameter families
    $\Psi|_{\inte(A^{c}_1)\x B^{2g-2}}$
    are ``topologically the same" (regardless of $c$) in the following sense. There exists a continuous  map, $$\{P:\inte(A_1)\x B^{2g-2}\to \inte(A^{(0,0)}_1)\x B^{2g-2}\},$$ and a continuous  family of smooth diffeomorphisms, $$\{\varphi:\inte(A_1)\x B^{2g-2}\to\mathrm{Diff}(S^3) \},$$ such that:
    \begin{itemize}
    \item For any $c\in\inte(\bD)$, 
    $P$ maps $\inte(A^c_1)\x B^{2g-2}$ homeomporhically onto $\inte(A^{(0,0)}_1)\x B^{2g-2}$.
        \item For any $(a,b)\in A_1\x B^{2g-2}$,
    $$\Psi(P (a,b))=\varphi(a,b)(\Psi(a,b)).$$
    \end{itemize}
    \item \label{item:areaBound}
    Under the standard unit metric on $S^3$, we can require $\Psi(a,b)$ and $\Phi_5$ to be arbitrarily $\bF$-close for every $(a,b)$, uniformly in $(a,b)$,  by choosing $\epsilon_1,\epsilon_2$ to be sufficiently small. 
    \end{enumerate}
\end{thm}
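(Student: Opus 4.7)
The plan is to first verify $\Psi$ is a well-defined Simon-Smith family with genus bounded by $g$, and then to establish Properties (\ref{Item:Family_Genus0})--(\ref{item:areaBound}) in turn. The key observation that makes everything tractable is that inside the tubular neighborhood $N_1(a_1, a_2)$, after the affine change of variables $(\tilde x_1, \tilde x_2) = (x_1 + a_2, x_2 + a_1)$, the defining expression (\ref{eq:bigExpression}) simplifies to $\tilde x_1 \tilde x_2 = -\tilde c_{a,b}(\alpha)$, where $\tilde c_{a,b}(\alpha)$ is a trigonometric polynomial in $\alpha$ of degree $\leq g+1$ depending smoothly on $(a,b)$.

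From this local normal form, the zero set inside $N_1$ is smooth wherever $\tilde c_{a,b}(\alpha) \neq 0$, and has an isolated double-cone singularity at each simple zero of $\tilde c_{a,b}$; higher-order zeros contribute only finitely many more singular points. Outside $N_1$, the zero set of $\hat\Phi_5(a)$ is smooth since it is a small perturbation of a smooth quadric. This shows $\Psi(a,b) \in \cS(S^3)$, with $|P_\Psi(a,b)|$ uniformly bounded by $2g+2$. The genus inside $N_1$ is bounded by the number of handles created, which equals roughly half the number of roots of $\tilde c_{a,b}$, hence at most $g+1$; a careful accounting (each consecutive pair of roots contributes a handle, with the endpoints determined by asymptotic sign) and the arrangement of the $(1-\|b\|)\cos(g+1)\alpha$ term give $\leq g$. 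Simon-Smith continuity in $(a,b)$ away from $P_\Psi(a,b)$ follows from smoothness of the defining functions in all variables; boundary behavior as $a \to \partial A_{\sing}$ is controlled because $\eta$ and $\epsilon_1$ vanish there, forcing $N_1(a_1, a_2)$ to collapse to a single point that is absorbed into $P_\Psi$.

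Properties (\ref{Item:Family_Genus0}), (\ref{Item:Family_Genus1}), and (\ref{item:areaBound}) are then direct consequences. Property (\ref{Item:Family_Genus0}) holds because for $a \in \overline{\RP^5 \setminus A_1}$ the surface $\Psi(a,b) = \hat\Phi_5(a)$ is a small smooth perturbation of the quadric $\Phi_5(a)$, generically a round $2$-sphere, with the perturbation small enough to preserve its diffeomorphism type. Property (\ref{Item:Family_Genus1}) follows since at $b \in \partial B^{2g-2}$ the coefficient $(1-\|b\|)$ vanishes, dropping the degree of the bracketed trigonometric polynomial to $\leq g$, hence at most $2g$ roots and genus $\leq g-1$. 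Property (\ref{item:areaBound}) is a uniform $\bF$-estimate: outside $A_1$, $\Psi$ agrees with $\hat\Phi_5$, which differs from $\Phi_5$ only inside the thin tube $N_2$ of width $O(\eta)$; inside $A_1$, the additional modification in (\ref{eq:bigExpression}) is of size $O(\epsilon_1 \epsilon_2)$ and supported in $N_1$. Choosing $\eta, \epsilon_1, \epsilon_2$ sufficiently small makes each $\Psi(a,b)$ uniformly $\bF$-close to $\Phi_5(a)$. Property (\ref{Item:Family_5-sweepout}) is then immediate: $\Phi_5$ is the classical $5$-sweepout generated by the $6$-dimensional space spanned by $\{1, x_1, x_2, x_3, x_4, x_1 x_2\}$, and being a $5$-sweepout is an $\bF$-open condition on $\bF$-continuous maps $\RP^5 \to \cZ_2(S^3; \Z_2)$.

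Property (\ref{item:topoSame}), the topological sameness, is the main technical obstacle. The strategy is to promote the $(a_1, a_2)$-translation above to an ambient diffeomorphism of $S^3$. The key fact is that, after the affine substitution $\tilde x_i = x_i + a_{3-i}$ and a positive rescaling by $\eta(a_1^2+a_2^2)/\eta(0)$ in the $(\tilde x_1, \tilde x_2)$-plane, the defining expression (\ref{eq:bigExpression}) inside $N_1(a_1, a_2)$ matches, up to a positive scalar multiple, the corresponding expression on $A_1^{(0,0)}$ with reparametrized coefficients. This prescribes both the map $P$ (explicitly, $a_0 \mapsto (a_0 - a_1 a_2)/\sqrt{1-a_1^2-a_2^2}$, $a_1, a_2 \mapsto 0$, $a_3, a_4$ rescaled by the same factor, and the $\epsilon_i$-ratios absorbed into a rescaling of $b$) and the ambient diffeomorphism $\varphi(a,b)$, which I would construct as the linear translation-plus-dilation inside $N_1(a_1, a_2)$ (mapping it onto $N_1(0,0)$), extended by the identity outside $N_2(a_1, a_2)$ via a radial cut-off, and smoothly interpolated on the annulus $N_2 \setminus N_1$. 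Smooth dependence on $(a,b)$ is obtained by fixing the cut-off profile as a function of the rescaled radial coordinate. The only delicate point is the locus $(a_1, a_2) \to \partial \bD$, where the tubes degenerate; but $\epsilon_1$ vanishes there, so $\Psi(a,b) = \hat\Phi_5(a)$ and $\varphi$ may be taken to be the identity.
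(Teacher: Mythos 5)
Your overall strategy (reduce to the normal form $\tilde x_1\tilde x_2+F_{(a,b)}(\alpha)=0$ in the tube, count roots of the trigonometric polynomial, degree drop at $\partial B^{2g-2}$ for item (3), smallness of $\eta,\epsilon_1,\epsilon_2$ for item (5)) is the same as the paper's, but there are concrete gaps. First, your ``key observation'' only holds for $a\in A_1$: for $a\in A_2$ the coefficient of $a_3\cos\alpha+a_4\sin\alpha$ inside $N_1(a_1,a_2)$ is the $x$-dependent interpolation $\varsigma(a,x)=(1-\psi(a,x))\sqrt{1-x_1^2-x_2^2}+\psi(a,x)\sqrt{1-a_1^2-a_2^2}$, so the zero set is \emph{not} of the form $\tilde x_1\tilde x_2=-\tilde c(\alpha)$ there, and your claim that for $a\in\overline{\RP^5\setminus A_1}$ the perturbation is ``small enough to preserve the diffeomorphism type'' is false: the family contains singular members on this region (two discs touching at a point). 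The paper handles exactly this region by a quantitative critical-point count (at most two simultaneous solutions of $\nabla\tilde F_a=0$, $\tilde F_a=0$ in $N_1$), proved via a contraction-mapping argument and a stability lemma for almost-trigonometric polynomials (Appendix B, following \cite[Lemma 7.7]{ChuLiWang2025GenusTwoI}); some such argument is needed and is missing from your proposal. Likewise, genus $0$ of $\Phi_5(a)$ for general $a\notin A_{\sing}$ is not ``generically a round sphere'' — it is a nontrivial fact the paper imports from \cite[Lemma 7.1(iii)]{ChuLiWang2025GenusTwoI}. For item (2), asserting that the $5$-sweepout property is $\bF$-open is not a standard fact and is unproved; the paper avoids it by noting that along the loop $a_2=a_3=a_4=a_5=0$ (which misses $A_1\cup A_2$) one has $\Psi=\Phi_5$, giving a $1$-sweepout, whence $\lambda$ generates $H^1(\RP^5;\Z_2)$ and $\lambda^5\neq 0$.

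The more serious error is in item (4). Your diffeomorphism $\varphi(a,b)$ is a translation-plus-dilation supported in $N_2(a_1,a_2)$ and equal to the identity outside; but the required identity $\Psi(P(a,b))=\varphi(a,b)(\Psi(a,b))$ would then force $\Psi(a,b)$ and $\Psi(P(a,b))$ to coincide outside the tube, which is false: outside $N_1$ the surface with parameters $(a_1,a_2)=c\neq(0,0)$ is close to the spheres $\{x_1=-a_2\}\cup\{x_2=-a_1\}$, while the target surface is close to $\{x_1=0\}\cup\{x_2=0\}$, and moreover $P$ changes $a_0,a_3,a_4$, so even the ``outer'' pieces are different zero sets. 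The diffeomorphism must move the whole configuration, and its construction should be organized around the fact (which is how the paper sets things up) that $F_{(a,b)}$ and $F_{P(a,b)}$ agree up to a positive factor, so the two surfaces have the same combinatorial handle structure. Relatedly, your explicit formula for $P$ with ``the $\epsilon_i$-ratios absorbed into a rescaling of $b$'' does not obviously make sense: the bracketed term is not homogeneous in $b$ because of the $(1-\|b\|)\cos(g+1)\alpha$ coefficient, so the existence, uniqueness and bijectivity of $P$ onto $\inte(A_1^{(0,0)})\times B^{2g-2}$ require a genuine argument — the paper proves this by showing that the normalized coefficient map $\hat\proj\circ\tilde T^{c}$ is a homeomorphism onto $S^{2g+1}_+\setminus Q$ (with injectivity via a monotonicity of the $(2g+2)$-th coordinate) and defining $P=(\hat\proj\circ\tilde T^{(0,0)})^{-1}\circ(\hat\proj\circ\tilde T^{(a_1,a_2)})$. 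As written, your item (4) argument does not establish the statement.
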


We will prove Theorem \ref{prop:Psi1} in   \S \ref{sect:ProofPsi1}.

\begin{rmk}\label{rmk:epsilon12}
As will be clear in the proof, we can assume  the functions $\eta,\epsilon_1,\epsilon_2$ to be arbitrarily small, though $\epsilon_2$ would depend on $\epsilon_1$, and $\epsilon_1$ on $\eta$. Of course, they all depend on $g$.
\end{rmk}

\subsection{Applying Theorem \ref{thm:TopoMinMax}} Let us now prove Theorem \ref{thm:mainTopo} by contradiction: Letting $Y$ be the parameter space $\RP^5\x B^{2g-2}$, we  suppose there exists some   Simon-Smith family $H:[0,1]\x Y\to \cS_{\leq g}$ such that: 
    \begin{itemize}
        \item $H(0,\cdot)=\Psi$.
        \item $\Psi':=H(1,\cdot)$ maps into $\cS_{\leq g-1}$.
        \item For each $y\in Y$, $t\mapsto H(t,y)$ is a pinch-off process.
    \end{itemize}
Let $Z_0$ (resp. $Z_{\geq 1})$ be the set of $y\in Y$ such that $\Psi'(y)$ has genus $0$ (resp. in the range $[1,g-1]$). 
    
Below, all chains, homology groups, and cohomology groups are assumed to have $\Z_2$-coefficients.
Now, since $\cS_{\leq g}(S^3)\subset \cZ_2(S^3;\Z_2)$, we can view $\Psi$ as a map into  $\cZ_2(S^3;\Z_2)$ as well. Define the pullback $\lambda:=\Psi^*(\bar\lambda)$ in $H^1(Y)$.  Now, it is easy to check that, for some (and thus all) homotopically non-trivial loop $\gamma\subset Y$, the subfamily $\Psi|_\gamma$ is a 1-sweepout, e.g. let $\gamma$ be the loop given by 
$$\{[a_0:a_1:0:...:0]\}\x\{0\}\subset Y.$$  Hence, the class $\lambda\in H^1(Y)=\Z_2$ is actually the unique non-trivial element. This immediately implies the cup product element $\lambda^5\in H^5(Y)=\Z_2$  is non-zero, and is such that the pairing $\lambda^5([\RP^5\x\{0\}])\ne 0$. 
\begin{rmk}
    By considering intersection number, the Poincar\'e dual $PD(\lambda^5)\in H_{2g-2}(Y,\partial Y)$ of $\lambda^5$ is given by the element $[\{O_1\}\x B^{2g-2}]$, where $O_1$ denotes the point $[0:...:0:1]\in \RP^5$. Equivalently, we have the following relation regarding  relative cap product:
\begin{equation}\label{eq:IntroCap}
    [Y]\frown\lambda^5=[\{O_1\}\x B^{2g-2}]\in H_{2g-2}(Y,\partial Y).
\end{equation}
Note $[Y]$ denotes the fundamental class in $H_{2g+3}(Y,\partial Y)$. Readers may refer to \cite[p.240]{hatcher2002book} for the definition of relative cup product.
\end{rmk}

On the other hand, recalling that no Simon-Smith family of genus 0 in $S^3$ could be a 5-sweepout, we know that the restriction  $\lambda^5|_{Z_0}$ given by pullback onto $Z_0$  is $0$. Then, we claim that by a purely elementary algebraic topological argument, we can show that:
\begin{lem}\label{lem:capProd}
    There exists some $(2g-2)$-subcomplex $D\subset Z_{\geq 1}$ with $\partial D\subset Z_{\geq 1}\cap \partial Y$ such that
$$[D]=[Y]\frown \lambda^5\in H_{2g-2}(Y,\partial Y).$$
\end{lem}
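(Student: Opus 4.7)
The plan is to lift $\lambda^5$ via the vanishing $\lambda^5|_{Z_0}=0$ to a relative class $\tilde\lambda^5\in H^5(Y,Z_0;\Z_2)$, then cap with $[Y]$ and apply excision to produce a preimage of $[Y]\frown\lambda^5$ in $H_{2g-2}(Z_{\geq 1},Z_{\geq 1}\cap\partial Y;\Z_2)$, and finally realize this class as the fundamental class of a $(2g-2)$-subcomplex $D\subset Z_{\geq 1}$.

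I would first verify that $Z_0$ is closed in $Y$ (equivalently, $Z_{\geq 1}$ is open). This follows from the lower semi-continuity of the genus function for Simon-Smith families: if $y_n\to y_0$, the smooth part of $\Psi'(y_n)$ converges in $C^\infty_{\mathrm{loc}}$ to that of $\Psi'(y_0)$ away from the finitely many singular points $P_{\Psi'}(y_0)$, so every handle of $\Psi'(y_0)$ survives in $\Psi'(y_n)$ for $n$ large, giving $\fg(\Psi'(y_n))\geq\fg(\Psi'(y_0))$. Given this, the long exact sequence of the pair $(Y,Z_0)$ in $\Z_2$-cohomology produces a lift $\tilde\lambda^5\in H^5(Y,Z_0;\Z_2)$ mapping to $\lambda^5$. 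The relative cap product then yields
\[
\tilde\lambda^5\frown[Y]\;\in\;H_{2g-2}(Y,Z_0\cup\partial Y;\Z_2),
\]
and by naturality of cap product this equals the image of $[Y]\frown\lambda^5$ under the quotient $H_{2g-2}(Y,\partial Y;\Z_2)\to H_{2g-2}(Y,Z_0\cup\partial Y;\Z_2)$.

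Next, I would enlarge $Z_0$ slightly to an open regular neighborhood $U\supset Z_0$ deformation retracting onto $Z_0$ (so that $\lambda^5|_U=0$ persists), and apply excision to identify $H_{2g-2}(Y,U\cup\partial Y;\Z_2)\cong H_{2g-2}(Y\setminus U,(Y\setminus U)\cap\partial Y;\Z_2)$, with $Y\setminus U\subset Z_{\geq 1}$. A short diagram chase combining this identification with the naturality displayed above shows that $[Y]\frown\lambda^5$ lies in the image of the inclusion-induced map $H_{2g-2}(Z_{\geq 1},Z_{\geq 1}\cap\partial Y;\Z_2)\to H_{2g-2}(Y,\partial Y;\Z_2)$. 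After refining the cubical structure on $Y$ and invoking the fact that every $\Z_2$-homology class in a finite CW complex is representable by a subcomplex, I would pick a $(2g-2)$-subcomplex $D\subset Z_{\geq 1}$ with $\partial D\subset Z_{\geq 1}\cap\partial Y$ whose class is a preimage, as required.

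The main obstacle I anticipate is executing the thickening and excision cleanly: the topological boundary $\partial Z_0$ lies in the interior $Y^{\circ}$, so the naive closure hypothesis $\overline{Z_0}\subset\operatorname{int}(Z_0\cup\partial Y)$ for excision does not hold as stated, and the neighborhood $U$ must be constructed carefully — using that $Z_0$ is a closed ANR in $Y$, which is guaranteed by the Simon-Smith structure on $\Psi'$. Once this is handled, the remaining diagram chasing and the PL representation of the preimage class by a subcomplex of a sufficiently fine cubical subdivision of $Z_{\geq 1}$ are routine.
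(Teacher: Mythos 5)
Your algebraic machinery (lift $\lambda^5$ to $H^5(Y,Z_0;\Z_2)$, cap with the fundamental class, excise, then represent the resulting relative class by a $(2g-2)$-subcomplex of a refinement) is essentially the content of the ``purely algebraic topological fact'' the paper invokes, and that part would go through once one has an \emph{open} set containing $Z_0$ on which $\lambda^5$ vanishes. The genuine gap is exactly the step you flag and then wave away: passing from $\lambda^5|_{Z_0}=0$ on the closed set $Z_0$ to $\lambda^5|_U=0$ on an open neighborhood $U$. Singular cohomology is not taut/continuous on compact subsets: a class can restrict to zero on a compact set yet be nonzero on every neighborhood of it (the standard example is a Warsaw circle winding around an annulus, where $H^1$ of the compact set is trivial but the ambient generator is nonzero on every neighborhood). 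Your claimed remedy --- that $Z_0=\{\fg(\Psi')=0\}$ is a closed ANR admitting a regular neighborhood that deformation retracts onto it, ``guaranteed by the Simon-Smith structure'' --- has no justification: the genus-$0$ locus is merely a closed subset of $Y$ and can a priori be topologically wild. (Relatedly, the vanishing $\lambda^5|_{Z_0}=0$ itself is delicate, since Proposition \ref{prop:no5sweepoutGenus0} concerns families parametrized by finite complexes, not arbitrary compact subsets.)

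This is precisely where the paper inserts a geometric, not topological, ingredient: by Proposition \ref{prop:pinchOffg0g1} there is a subcomplex $Z\subset Y$ (after refinement) whose \emph{interior} contains $Z_0$ and such that $\Psi'|_Z$ can be deformed via pinch-off processes into $\cS_0$; combined with Proposition \ref{prop:no5sweepoutGenus0} this yields $\lambda^5|_{\inte(Z)}=0$ on an honest open neighborhood, and only then does the algebraic argument (the paper cites Lemma 3.11 of \cite{ChuLiWang2025GenusTwoI} with the open cover $W_1=\inte(Z)$, $W_2=Z_{\geq 1}$) produce $\theta\in H_{2g-2}(Z_{\geq 1},Z_{\geq 1}\cap\partial Y)$ pushing forward to $[Y]\frown\lambda^5$, represented by the subcomplex $D$. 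To repair your proof you should replace the regular-neighborhood/ANR step by this interpolation result; your observation that $Z_{\geq 1}$ is open (lower semicontinuity of genus) and the final subcomplex-representation step are fine.
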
 
We postpone the proof to \S \ref{sect:ProofCapProd}. 



In summary, we have:
\begin{enumerate}
\item $D$ is a $(2g-2)$-subcomplex   with $\partial D\subset  \partial Y$.
\item $[Y]\frown \lambda^5=[D]\in H_{2g-2}(Y,\partial Y).$
    \item Since each member of $\Psi'|_{D}$ has genus in the range $[1,g-1]$, and pinch-off process is genus-non-increasing, we also know that each member of $H|_{[0,1]\x D}$ has genus in the range $[1,g]$.
\end{enumerate}
We are going to derive a contradiction using these three items, and thereby finish the proof of  Theorem \ref{thm:mainTopo}.  

\subsection{A certain non-trivial cycle $\partial\sigma_3$} In this section, we will extract from $Y$ a $(2g-2)$-chain $\sigma_3$ that has some special feature.

Let us denote by $\sigma_1:D\to Y$ the $(2g-2)$-chain represented by $D$.
Since $\Psi(a,b)$ has genus 0 whenever $a\in \overline{\RP^5\backslash A_1}$ by Theorem \ref{prop:Psi1}, we know that   $\sigma_1$ maps into $ \inte(A_1)\x B^{2g-2}$ with  $\partial\sigma_1$ in $ \inte(A_1)\x\partial B^{2g-2}$ (recall $\inte(\cdot)$ denotes the interior of a set). 
From $[Y]\frown \lambda^5=[\sigma_1]$, it is easy to check that within  $\overline{A_1}\x B^{2g-2}$ (topologically a closed $(2g+3)$-ball),  $\sigma_1$ and $\overline{A_1}\x   0$ have intersection number 1 (mod 2) (possibly after perturbation): Note the boundaries of these two sets must be disjoint, because $\Psi|_{\partial A_1\x 0}$ has genus 0 but members of  $\Psi|_{\partial \sigma_1}$ have genus in the range $[1,g-1]$. See Figure \ref{fig:A2xB}. Denote  $$A^0_1=A^{(0,0)}_1:=\{a\in A_1:(a_1,a_2)=(0,0)\},$$  which is a 3-dimensional closed ball in $A_1$. By Theorem \ref{prop:Psi1} (\ref{item:topoSame}), there is a {\it strong} deformation retraction  of $\inte(A_1)\x B^{2g-2}$ onto $ \inte(A^0_1)\x B^{2g-2}$ (the word strong means $ \inte(A^0_1)\x B^{2g-2}$ is fixed throughout the retraction), $$F_1:[0,1]\x (\inte(A_1) \x B^{2g-2})\to \inte(A_1)  \x B^{2g-2},$$  such that:
\begin{itemize}
    \item For each $t\in [0,1]$, $F_1(t,\cdot)$ maps $\inte(A_1)\x \partial B^{2g-2}$  into itself.
    \item For each fixed $y$, the genus of $\Psi(F_1(t,y))$ is the same for every $t\in [0,1]$. 
\end{itemize}
\begin{figure}
        \centering
        \makebox[\textwidth][c]{\includegraphics[width=3in]{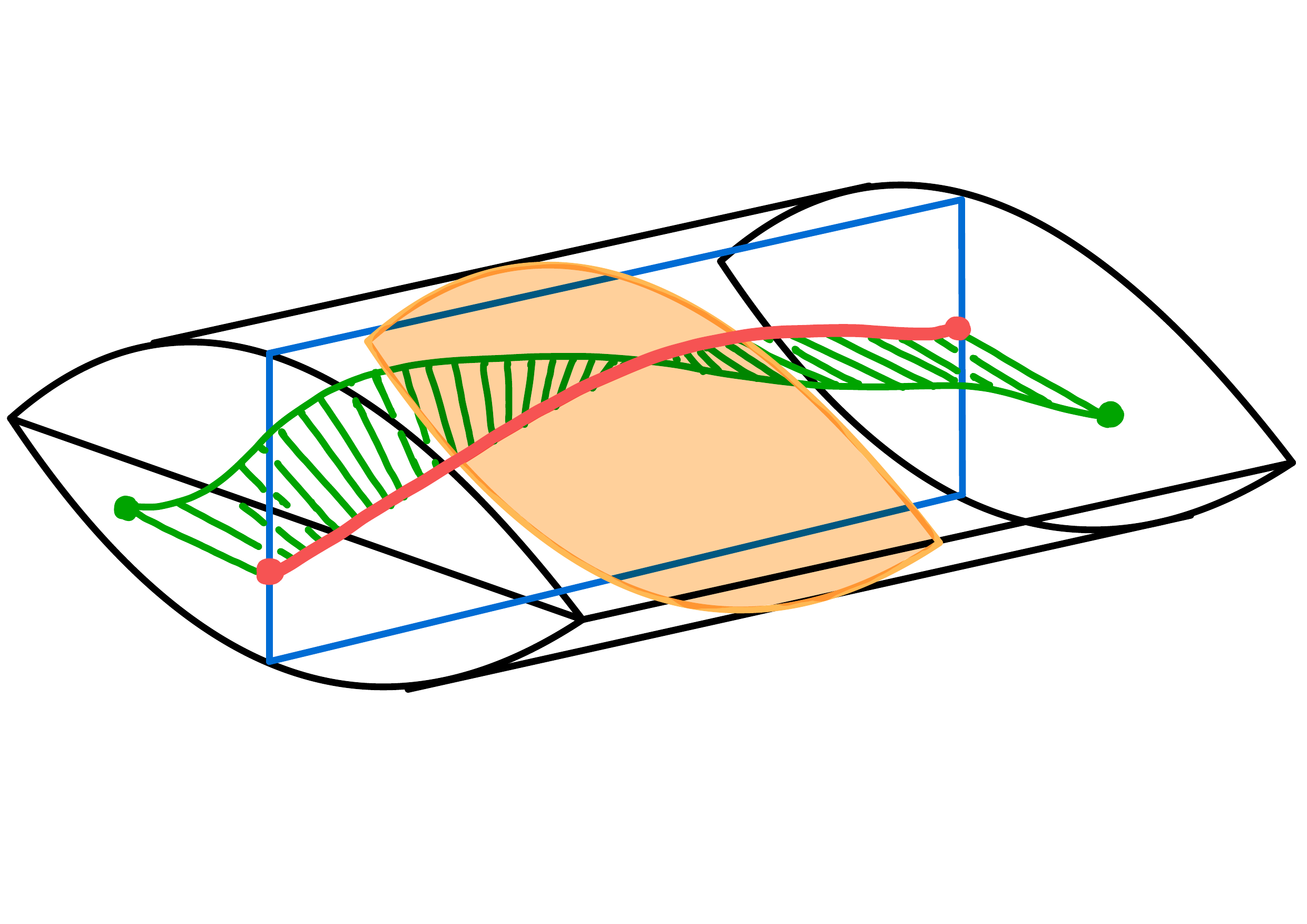}}
        \caption{This shows $A_1\x B^{2g-2}$. The orange slice is $A_1\x 0$. The plane with blue boundary is $A^0_1\x B^{2g-2}$. The green curve is $\sigma_1$, which can be deformed (following the green straight lines) to the    red curve, denoting  $\sigma_2:= F_1(1,\cdot)\circ \sigma_1.$ }
        \label{fig:A2xB}
    \end{figure}

Denote $X:= A^0_1\x B^{2g-2}$, which is homeomporhic to a closed ball, and
  consider the singular  $(2g-2)$-chain $\sigma_2:=F_1(1,\cdot)\circ \sigma_1$ in $X$. Note $\partial\sigma_2$ is in $ \partial X$ based on our definition of $F_1$. Moreover, from the fact that $\sigma_1$ and $\overline{A_1}\x   0$ have intersection number 1  in $\overline{A_1}\x B^{2g-2}$, we can immediately deduce:
\begin{lem}\label{lem:sigma2A01IntersectionNumber}
In $X$, $\sigma_2$ and $ A^0_{1}\x 0$ still have intersection number $1$ (mod $2$) (possibly after  perturbing $\sigma_2$), and their boundaries  are disjoint.
  \end{lem}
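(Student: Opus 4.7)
The strategy is to transport the known intersection number of $\sigma_1$ with $\overline{A_1}\times\{0\}$ in $\overline{A_1}\times B^{2g-2}$ across the deformation retraction $F_1$ (to obtain the analogous statement for $\sigma_2$), and then to identify the resulting count with the intersection number of $\sigma_2$ and $A^0_1\times\{0\}$ inside $X$. Both steps will hinge on three features of $F_1$: it carries $\sigma_1$ to $\sigma_2$, it preserves the slab $\inte(A_1)\times\partial B^{2g-2}$, and it lands in $\inte(A^0_1)\times B^{2g-2}$.

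For boundary disjointness, since $F_1(1,\cdot)$ sends $\inte(A_1)\times B^{2g-2}$ into $\inte(A^0_1)\times B^{2g-2}$ while mapping $\inte(A_1)\times\partial B^{2g-2}$ into itself, one has $\partial\sigma_2=F_1(1,\cdot)(\partial\sigma_1)\subset \inte(A^0_1)\times\partial B^{2g-2}$. On the other hand, $\partial(A^0_1\times\{0\})=\partial A^0_1\times\{0\}$, and since $0\notin\partial B^{2g-2}$, the two boundary sets are disjoint.

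For the intersection count, after a small generic perturbation of $\sigma_1$ to put everything in transverse position, I form the trace $(2g-1)$-chain $T := F_1([0,1]\times\sigma_1)$ inside $\inte(A_1)\times B^{2g-2}$. Its boundary (mod $2$) is
\[
\partial T \;=\; \sigma_1+\sigma_2+F_1([0,1]\times\partial\sigma_1),
\]
and the last term lies entirely in $\inte(A_1)\times\partial B^{2g-2}$, hence is disjoint from $\overline{A_1}\times\{0\}$. Thus $T$ furnishes a cobordism in $\overline{A_1}\times B^{2g-2}$ between $\sigma_1$ and $\sigma_2$ modulo a chain that never meets $\overline{A_1}\times\{0\}$, so the intersection numbers of $\sigma_1$ and $\sigma_2$ with $\overline{A_1}\times\{0\}$ in the ambient $(2g+3)$-ball coincide, and both equal $1$.

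Finally, I compare the two intersection pairings at $\sigma_2$. Because $\sigma_2$ is supported in $\inte(A^0_1)\times B^{2g-2}$, every point of $\sigma_2\cap(\overline{A_1}\times\{0\})$ automatically lies in $A^0_1\times\{0\}$, so the two intersection sets agree pointwise. A brief tangent-space check shows that $\sigma_2$ is transverse to $A^0_1\times\{0\}$ in $X$ iff it is transverse to $\overline{A_1}\times\{0\}$ in $\overline{A_1}\times B^{2g-2}$: in either case the condition is that the projection of $T_p\sigma_2$ onto the $B^{2g-2}$-direction is surjective, since the two extra $(a_1,a_2)$-directions needed for transversality in the larger ambient are already contained in $T_p(\overline{A_1}\times\{0\})$. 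Combined with the cobordism step, this yields the desired mod-$2$ intersection number $1$ inside $X$. The only technicality is the transversality perturbation, which is standard and can be absorbed into a small adjustment of $\sigma_1$ (equivalently of $D$).
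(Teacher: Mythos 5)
Your proof is correct and follows essentially the route the paper intends: the paper deduces the lemma directly from the intersection number of $\sigma_1$ with $\overline{A_1}\times\{0\}$ using the properties of the strong deformation retraction $F_1$, and your trace-chain cobordism plus the tangent-space comparison simply spells out the standard details of that deduction. No gaps beyond the routine transversality perturbations you already acknowledge.
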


Now, for each $k=0,1,...,g$, define the  sets $$X_k:=\{y\in X:\fg(\Psi(y))=k\}, \quad X_{\geq k}:=\{y\in X:\fg(\Psi(y))\geq k\}.$$
Note $X_g$ is an open set, and $\overline{X_g}\subset \inte(X)$, as $\Psi|_{\partial X}$ has genus $\leq g-1$. In the following, we would want to focus on the subset $\overline{X_g}$.

\begin{prop}\label{prop:DeformRetractF2} The subset $X_g\subset X$ is an open   $(2g+1)$-dimensional region with piecewise smooth, algebraic boundary, and  $\overline{X_g}$ is topologically a closed $(2g+1)$-ball. 

Moreover,
    there exists a deformation retraction of $\mathrm{int}(X)$ onto $\overline{X_g}$,
    $$F_2:[0,1]\x \mathrm{int}(X)\to \mathrm{int}(X),$$
    such that:
    \begin{itemize}
        \item For each fixed $y$, the genus of $\Psi(F_2(t,y))$ is the same for all $t$. 
        \item The map $F_2(t,\cdot)$ fixes subset $\overline{X_g}$ for each $t$.
        \item For every $t$ and $y\in \mathrm{int}(X)\backslash{X_g}$, we have $F_2(t,y)\in \mathrm{int}(X)\backslash{X_g}$.
    \end{itemize}
\end{prop}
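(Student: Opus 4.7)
The plan is to analyze $X_g$ via the trigonometric polynomial obtained by restricting (\ref{eq:bigExpression}) to the axial circle $\{x_1=x_2=0\}$, and to leverage the description in \S \ref{sect:symProd} of $\operatorname{Sym}^{2g+2}_0(S^1)$ as a closed $(2g+1)$-simplex. For $y=(a,b)\in X=A_1^0\times B^{2g-2}$, this restriction is
\[
F_y(\alpha)= a_0 + a_3\cos\alpha + a_4\sin\alpha + \epsilon(a)\Bigl[\sum_{k=2}^{g}(b_k\cos k\alpha + b'_k\sin k\alpha) + (1-\|b\|)\cos(g+1)\alpha\Bigr],
\]
with $\epsilon(a):=\epsilon_2(0)(\epsilon_1(0)-\|(a_0,a_3,a_4)\|)$ and with the $\sin((g+1)\alpha)$-coefficient identically zero. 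Unpacking the description in \S \ref{sect:2g+3} shows $\fg(\Psi(y))=g$ exactly when $F_y$ has $2g+2$ distinct simple zeros on $S^1$. Therefore $X_g$ is the preimage in $X$ of the open semi-algebraic condition that the discriminant of $F_y$ and its leading coefficient $c_{g+1}=\epsilon(a)(1-\|b\|)$ are both non-zero; this yields the openness and piecewise smooth algebraic boundary asserted in the first part of the statement.

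To exhibit the ball structure, construct the inverse of the zeros map as follows. Given a configuration $\{\alpha_1,\dots,\alpha_{2g+2}\}\in \operatorname{Sym}^{2g+2}_0(S^1)$, let $F^*$ be the monic trigonometric polynomial of degree $g+1$ with these zeros, and denote its Fourier coefficients by $(\tilde c_0^*,\tilde c_1^*,\tilde s_1^*,\dots,\tilde c_g^*,\tilde s_g^*,1,0)$. The family's scaling constraint $c=\epsilon(a)(1-\|b\|)$ together with the matching $a = c(\tilde c_0^*,\tilde c_1^*,\tilde s_1^*)$ and $(b_k,b'_k)=(1-\|b\|)(\tilde c_k^*,\tilde s_k^*)$ reduces to the closed form
\[
c=\frac{\epsilon_2(0)\epsilon_1(0)}{1+\sqrt{\tilde B^*}+\epsilon_2(0)\tilde L^*},
\]
where $\tilde L^*=\|(\tilde c_0^*,\tilde c_1^*,\tilde s_1^*)\|$ and $\tilde B^*=\sum_{k=2}^g((\tilde c_k^*)^2+(\tilde s_k^*)^2)$; one checks $\|a\|<\epsilon_1(0)$ and $\|b\|<1$ hold automatically. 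The resulting assignment $s:\operatorname{Sym}^{2g+2}_0(S^1)\to X$ is continuous and injective, with $s$ mapping the top (open) stratum bijectively onto $X_g$ and hence $s(\operatorname{Sym}^{2g+2}_0(S^1))=\overline{X_g}$ by compactness. A continuous bijection between compact Hausdorff spaces is a homeomorphism, giving $\overline{X_g}\cong \operatorname{Sym}^{2g+2}_0(S^1)\cong \Delta^{2g+1}$, a closed $(2g+1)$-ball.

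For the deformation retraction $F_2$, substitute $z=e^{i\alpha}$ so that $z^{g+1}F_y(\alpha)$ becomes a palindromic polynomial in $z$ of degree $2g+2$; its zeros come either on $|z|=1$ (corresponding to real zeros of $F_y$ on $S^1$) or in conjugate-reciprocal pairs $(z_0,1/\bar{z_0})$ with $|z_0|\ne 1$. For $y\in \operatorname{int}(X)\setminus \overline{X_g}$, let $m\geq 1$ be the number of off-$S^1$ pairs; then $\fg(\Psi(y))=g-m$. Define the flow by continuously transporting the modulus of each off-$S^1$ pair linearly along its common radial line from $|z_0|$ to $1$; when $|z_0|$ reaches $1$, the pair collapses into a double zero on $S^1$ at argument $\arg z_0$. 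Only even-multiplicity zeros are added to $S^1$ along the flow, so the count of sign changes of $F_y$ on $S^1$ (and hence $\fg(\Psi(\cdot))$) is preserved. At $t=1$ all pairs have collapsed, placing $F_2(1,y)\in \partial X_g$; for $0\le t<1$ at least one pair remains off $S^1$, keeping the trajectory in $\operatorname{int}(X)\setminus X_g$. Each intermediate polynomial is pulled back to its (unique) parameter in $\operatorname{int}(X)$ via the inverse of $(a,b)\mapsto F_y$, and we set $F_2=\operatorname{id}$ on $\overline{X_g}$.

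The principal obstacle is that the polynomial-space flow of paragraph 3 does not a priori preserve the codimension-one algebraic subvariety cut out by our family in the space of trigonometric polynomials with $s_{g+1}=0$; a genus-preserving correction is needed to keep the flow inside the family. The standard remedy is to stratify $\operatorname{int}(X)\setminus\overline{X_g}$ by the conjugate-reciprocal-pair structure of $F_y$, construct the pair-collapsing flow on each stratum using the implicit function theorem and the parametrization $s$ from paragraph 2, and glue via a partition of unity subordinate to a refinement of the stratification. A secondary subtlety in paragraph 2 is verifying that $s$ is genuinely the inverse of $y\mapsto\mathbf{z}(y)$ on $X_g$, which reduces to the identity $d+\epsilon(a)\|b\|+\epsilon_2(0)\|a\|=\epsilon_2(0)\epsilon_1(0)$ obtained directly from the definition of $\epsilon(a)$ and $d=\epsilon(a)(1-\|b\|)$.
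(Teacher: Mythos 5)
Your overall strategy is the same as the paper's: identify $\overline{X_g}$ with $\Sym^{2g+2}_0(S^1)$ via the roots of the associated trigonometric polynomial, and build $F_2$ by pushing the non-real (conjugate) root pairs onto the circle, using $\fg(\Psi(y))=\frac12 N_\odd(F_y)-1$ to see that genus is preserved. But the step you yourself flag as ``the principal obstacle'' is a genuine gap, and your proposed remedy does not close it. The point you are missing is that the obstacle disappears once one proves the analogue of the paper's two lemmas on trigonometric polynomials: up to a \emph{positive} scalar, the family $\{F_y\}_{y\in\inte(X)}$ consists of exactly those degree-$(g+1)$ trigonometric polynomials with real coefficients, $\sin(g+1)\alpha$-coefficient zero and $\cos(g+1)\alpha$-coefficient positive, and these are characterized on the root side by two conditions: the configuration of $2g+2$ roots in $(\R/2\pi\Z)\x i\R$ is invariant under conjugation, and the sum of the roots lies in $2\pi\Z$. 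Any root motion preserving these two conditions therefore automatically stays inside the family (this is the paper's homeomorphism $\bz\circ T:\inte(X)\to\cW$), so no ``genus-preserving correction'' is needed at all. Your fallback — stratify by the pair structure, apply the implicit function theorem stratum by stratum, and glue with a partition of unity — would not produce the required map: convex combinations of parameters do not preserve genus, and continuity at strata boundaries (where pairs collide or reach the circle) is exactly what such a gluing fails to control. Relatedly, your flow as literally described is not admissible: if you move the moduli of \emph{both} members of a pair $(z_0,1/\bar z_0)$ linearly to $1$, the intermediate configuration is no longer conjugate-reciprocal (the product of the two moduli exceeds $1$ for $0<t<1$), i.e.\ the intermediate ``polynomial'' does not have real coefficients and lies outside the family; you must keep the pair of the exact form $(z,1/\bar z)$ throughout (equivalently, in the $\alpha$-coordinate, fix real parts and shrink the $\pm$ imaginary parts symmetrically, which is what the paper's retraction of $\cW$ onto $\Sym^{2g+2}_0(S^1)$ does, with a uniform cap on the imaginary part so that continuity in the configuration is manifest).

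A second, smaller error is your description of $X_g$ in the first paragraph: nonvanishing of the discriminant and of the leading coefficient detects $2g+2$ \emph{distinct complex} roots, not $2g+2$ distinct \emph{real} roots, so it cuts out a strictly larger set than $X_g$ (a polynomial with one non-real conjugate pair and the rest simple real roots satisfies your condition but has genus $<g$). The openness of $X_g$ and the piecewise smooth algebraic nature of $\partial X_g$ are still true, but they must be derived from the all-roots-real condition (the paper does this by substituting $u=\cos\alpha$, $v=\sin\alpha$ and expressing ``the system $h(u,v)=0$, $u^2+v^2=1$ has exactly $2g+2$ distinct real solutions'' as an algebraic condition on the coefficients). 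Your explicit inverse parametrization $s$ of $\Sym^{2g+2}_0(S^1)$ in the second paragraph is essentially correct and matches the paper's construction of the homeomorphism $T$ (there obtained via $\hat\proj\circ\tilde T^{(0,0)}$); had you extended it from real-root configurations to all conjugate-symmetric configurations with sum in $2\pi\Z$, it would have resolved your principal obstacle.
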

We will prove this proposition in \S \ref{sect:ProofDeformRetractF2}.

Now, using $\sigma_2$ and $A^1_0\x 0$, and the deformation retraction $F_2$, let us define two new chains $\sigma_3$ and $\tau''$ in $\overline{X_g}$:
\begin{itemize}
\item First, we let $\sigma_3:= \sigma_2\cap \overline{X_g}$, which, via some perturbation, can  still be assumed to be a singular chain because $X_g$ has piecewise smooth, algebraic boundary (Proposition \ref{prop:DeformRetractF2}).
\item Let $\tau$ denote the 3-dimensional subset $A^1_0\x 0$. We may view $\tau$ as a simplicial 3-chain. Note $\partial \tau\subset X_0$, as the genus of members $\Psi|_{\partial\tau}$ can be directly checked via inspecting the expression  (\ref{eq:bigExpression}). Now, let us push $\tau$ inward slightly, near its boundary, to obtain a new 3-chain $\tau'$ that is disjoint from $\partial X$:  We can  assume throughout this  deformation process, the {\it boundary} of the 3-chains all still lie in $X_0$, {\it and in particular avoid $\sigma_2$}.
\item Then, we deform $\tau'$ by  the deformation retraction $F_2$, and consider the  3-chain $\tau'':=F_2(1,\cdot)\circ \tau$, which maps into $\overline{X_g}$. Note,  by the first bullet point in Proposition \ref{prop:DeformRetractF2}, we again know that throughout the deformation process, the boundary of the 3-chains all still lie in $X_0$, {\it and in particular avoid $\sigma_2$}. See Figure \ref{fig:X} for an illustration.
\end{itemize} 
As direct consequences, we have:
\begin{itemize}
    \item $\partial \tau''$ maps into $\partial X_g\cap X_0$ while $\partial \sigma_3$ maps into $\partial X_g\cap X_{\geq 1}$. In particular, they are disjoint. 
    \item $\sigma_3$ and $\tau''$  have intersection number 1 in the $(2g+1)$-ball $\overline{X_g}$, using Lemma \ref{lem:sigma2A01IntersectionNumber}.
\end{itemize}
From these two points,  we immediately know
\begin{equation}\label{eqLsigma2NonTri}
    [\partial\sigma_3]\ne 0\quad\textrm{ in }\quad H_{2g-3}(  \partial X_{g}\cap X_{\geq 1}).
\end{equation} 

\begin{figure}
        \centering
        \makebox[\textwidth][c]{\includegraphics[width=2.5in]{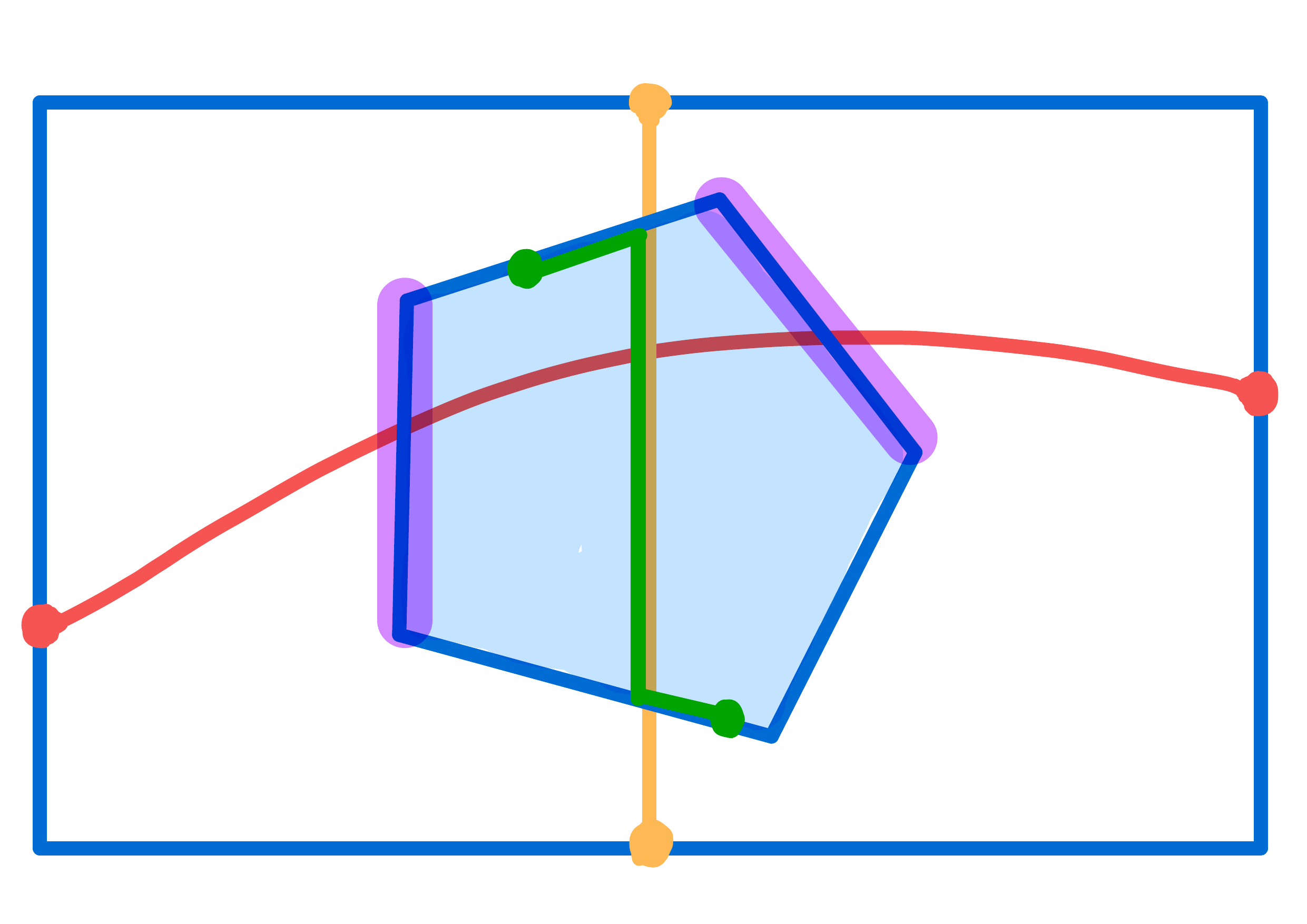}}
        \caption{The whole blue box is $X$, and the blue shaded region is $\overline{X_g}$. The red curve is $\sigma_2$, and the yellow line is $\tau=A^0_1\x 0$, which is deformed into $\tau''$, denoted by the green segments. The purple part is $\partial X_g\cap X_{\geq 1}.$}
        \label{fig:X}
    \end{figure}

\subsection{Identification with $\Gr(\Z^g_2)\x\Gr(\Z^g_2)$}\label{sect:identificationGr}
In this section, we begin to relate the homology classes of the complement regions of members of $\Psi|_{\overline{X_g}}$.

Recall that for $\fH (\Psi|_{\overline{X_g}})$, there is a natural projection map $\proj_1:\fH (\Psi|_{\overline{X_g}})\to \overline{X_g}$ such that for each $y\in \overline{X_g}$, its preimage set can be canonically identified with the group $H_1(S^3\backslash \Psi(y))$. Moreover, from the definition of Simon-Smith family, and that $\overline{X_g}$ is a subset of the contractible set $$(\RP^5\backslash \{a_5=0\})\x B^{2g-2},$$ we know that the family $\Psi|_{\overline{X_g}}$ admits some continuous choice of inside and outside regions for $\Psi(y)$, for every $y\in\overline{X_g}$. 

For each $y$, let $$L:H_1(\ins(\Psi(y)))\x H_1(\out(\Psi(y)))\to \Z_2$$ be the bilinear form given by the linking number for 1-cycles. Moreover we consider the vector space $\Z_2^g\oplus \Z_2^g$ (which is isomorphic to $H_1(S^3\backslash \Sigma_g)$ where   $\Sigma_g$ denotes any standard Heegaard surface of genus $g$ in $S^3$), and let $I_\id:\Z^g_2\oplus\Z^g_2\to\Z_2$ denote the  standard bilinear form on it, given by the $g\x g$ identity matrix. Let us  equip  $\Z_2^g $ with the discrete topology. Then the following proposition relates the homology classes of $S^3\backslash\Psi(y)$ for $y\in\overline{X_g}$.  
\begin{prop}\label{prop:embedHomology}
There exist  continuous maps\footnote{$\frak i$ stands for identification.} $$\mathfrak i_{\ins}:\fH _{\ins}(\Psi|_{\overline{X_g}})\to \Z_2^g.\quad \mathfrak i_{\out}:\fH _{\out}(\Psi|_{\overline{X_g}})\to \Z_2^g$$ 
such that for each $y\in \overline{X_g}$:
\begin{itemize}
    \item  $\frak i_{\ins}$ gives an isomorphism of $H_1(\ins(\Psi(y)))$ onto its image.
    \item $\frak i_{\out}$ gives an isomorphism of $H_1(\out(\Psi(y)))$ onto its image.
    \item Let $\frak i :=(\frak i_{\ins},\frak i_{\out})$, and consider the isomorphism from $H_1(\ins(\Psi(y)))\oplus H_1(\out(\Psi(y)))$ onto its image given by  $\frak i$. Then under this isomorphism, the linking number bilinear form
$$L:H_1(\ins(\Psi(y)))\oplus H_1(\out(\Psi(y)))\to\Z_2$$
is sent to the standard bilinear form $I_\id $ restricted onto the image  $$\frak i_{\ins}(H_1(\ins(\Psi(y))))\oplus \frak i_{\out}(H_1(\out(\Psi(y)))).$$
\end{itemize}
\end{prop}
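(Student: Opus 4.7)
The plan is to first show that over the open region $X_g \subset \overline{X_g}$ (where $\Psi(y)$ has genus exactly $g$), the spaces $\fH_\ins(\Psi|_{X_g}) \to X_g$ and $\fH_\out(\Psi|_{X_g}) \to X_g$ are trivial covering spaces with fiber $\Z_2^g$; the maps $\mathfrak{i}_\ins, \mathfrak{i}_\out$ are then the trivializations, extended continuously over $\partial X_g$.

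\textbf{Covering and triviality over $X_g$.} First, fix $y_0 \in X_g$. Since $\Psi(y_0)$ is a (smooth) genus-$g$ surface in $S^3$, Lemma~\ref{lem:alexander} together with the half-lives-half-dies principle yields $H_1(\ins(\Psi(y_0))) \cong H_1(\out(\Psi(y_0))) \cong \Z_2^g$, with the linking pairing $L$ non-degenerate. One picks bases $c_1,\ldots,c_g$ of $H_1(\ins(\Psi(y_0)))$ and $c_1^*,\ldots,c_g^*$ of $H_1(\out(\Psi(y_0)))$ satisfying $L(c_i, c_j^*) = \delta_{ij}$, represented by simplicial $1$-cycles $\gamma_i, \gamma_j^*$ disjoint from the finite set $P_\Psi(y_0)$. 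By the closedness property (Definition~\ref{def:Simon_Smith_family}(\ref{item:closedFamily})), these cycles remain disjoint from $\Psi(y)$ on a neighborhood $U$ of $y_0$ in $X_g$. The linking number of two fixed disjoint $1$-cycles in $S^3$ is $\Z_2$-valued and locally constant in $y$, so $L([\gamma_i], [\gamma_j^*]) \equiv \delta_{ij}$ on $U$, forcing $\{[\gamma_i]\}$ to be linearly independent and hence a basis of $H_1(\ins(\Psi(y))) \cong \Z_2^g$ at every $y \in U$. The basic open sets $\fU(y_0, \sum n_i \gamma_i, U)$ then evenly cover $\pi^{-1}(U)$, so $\pi : \fH_\ins|_{X_g} \to X_g$ is a covering map with fiber $\Z_2^g$; analogously for $\out$. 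Since $\overline{X_g}$ is a closed $(2g+1)$-ball by Proposition~\ref{prop:DeformRetractF2}, $X_g$ is simply connected and both coverings trivialize. Setting $\mathfrak{i}_\ins(c_i) := e_i$ and $\mathfrak{i}_\out(c_j^*) := e_j$ at $y_0$ and propagating by the trivialization yields continuous fiberwise group isomorphisms on $\fH_\ins|_{X_g}, \fH_\out|_{X_g}$; since linking numbers between fixed cycles are preserved along paths, the transported bases continue to pair as $\delta_{ij}$, so the linking form corresponds exactly to $I_\id$ on the image.

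\textbf{Extension across $\partial X_g$.} For $y_0 \in \partial X_g$ and $c \in H_1(\ins(\Psi(y_0)))$, pick a representative $1$-cycle $\gamma$ disjoint from $P_\Psi(y_0)$; by closedness, $\gamma \subset \ins(\Psi(y))$ on some neighborhood $V$ of $y_0$, so $[\gamma]_y \in H_1(\ins(\Psi(y)))$ is defined. Set $\mathfrak{i}_\ins(y_0, c) := \mathfrak{i}_\ins(y, [\gamma]_y)$ for any $y \in V \cap X_g$; the right-hand side is locally constant in $y$ and, by transporting $2$-chains witnessing homologies, independent of the choice of $\gamma$. The extension is a continuous group homomorphism on each fiber, and the linking-form identity carries over.

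\textbf{Main obstacle.} The delicate step is verifying \emph{injectivity} of $\mathfrak{i}_\ins$ on fibers over $\partial X_g$, where $\dim H_1(\ins(\Psi(y_0)))$ may be strictly less than $g$. If a non-trivial $c \in H_1(\ins(\Psi(y_0)))$ were to satisfy $\mathfrak{i}_\ins(y_0,c) = 0$, then a representative $\gamma$ bounds a $2$-chain $D_y \subset \ins(\Psi(y))$ for all $y \in V \cap X_g$, and the task is to produce a limiting bounding chain $D_0 \subset \ins(\Psi(y_0))$, contradicting $c \ne 0$. I expect to handle this extraction via the explicit algebraic structure of $\Psi$ near $\partial X_g$ from Theorem~\ref{prop:Psi1}---pinching and merging of handles occur along controlled loci in the tubular neighborhood $N_1$---which should allow one to choose $\{D_y\}$ converging to a well-defined $D_0$ avoiding $P_\Psi(y_0)$.
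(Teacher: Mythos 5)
Your construction over $X_g$ and your extension mechanism across $\partial X_g$ are essentially the paper's: the paper also identifies all fibers over $X_g$ canonically (it phrases this via the fact that all $\Psi(y)$, $y\in X_g$, are isotopic standard genus-$g$ Heegaard surfaces and $X_g$ is contractible, rather than via your covering-space trivialization, but the mechanism — fixed representative cycles, closedness of the family, local constancy of linking numbers, anchoring at a base point $y_0$ with a dual basis so that $L\mapsto I_\id$ — is the same), and it defines $\frak i_{\ins}(y_0,c_0)$ for $y_0\in\partial X_g$ exactly as you do, by evaluating at nearby $y\in X_g$ on a fixed representative. So up to the boundary extension your proposal tracks the paper.

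The genuine gap is the step you yourself flag: fiberwise injectivity (and the bullet points) over $\partial X_g$, which you do not prove but only promise to handle by extracting a limit $D_0$ of bounding $2$-chains $D_y\subset\ins(\Psi(y))$. As stated this is not a proof, and the proposed route is shaky: the chains $D_y$ carry no mass or geometric control near the points where handles pinch, so there is no compactness from which a limiting chain in $\ins(\Psi(y_0))$ could be extracted; moreover the seemingly natural shortcut via linking numbers is unavailable at boundary points, because for the singular members $\Psi(y_0)$ the form $L$ can be degenerate — $\dim H_1$ of a complement region can strictly exceed $\fg(\Psi(y_0))$ (e.g.\ two spheres touching at several points), so a nonzero class in $H_1(\ins(\Psi(y_0)))$ need not admit a dual cycle in $\out(\Psi(y_0))$. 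What actually closes this step (and what the paper's terse ``follow straightforwardly'' leans on) is the explicit description of $\Psi|_{\overline{X_g}}$ in Remark \ref{rmk:descriptionSurf}: each complement region is two ball-like pieces joined by ``fingers'' along the axis determined by the sign arcs of $F_{(a,b)}$, and passing from $y_0\in\partial X_g$ to nearby $y\in X_g$ only splits multiple roots, so every sign arc of $F_{y_0}$ persists as a sign arc of $F_y$ and no two are merged; hence the inclusion-induced map $H_1(\ins(\Psi(y_0)))\to H_1(\ins(\Psi(y)))$ given by fixed representatives is injective for combinatorial reasons, with no limit of $2$-chains needed. Without an argument of this kind (or an executed substitute for it), your proposal does not establish the first two bullet points on $\partial X_g$. (A minor additional slip: ``$\overline{X_g}$ is a closed ball, hence $X_g$ is simply connected'' is not a valid inference by itself; you need that $X_g$ corresponds to the interior of the simplex under the identification $\overline{X_g}\cong\Sym^{2g+2}_0(S^1)$, which is where simple connectivity, indeed contractibility, comes from.)
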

We postpone the proof to \S  \ref{sect:ProofEmbedHomology}.
Now, by passing to the Grassmannians,  the maps $\frak i_{\ins},\frak i_{\out}$  naturally induce, respectively, two maps 
$$\frak i_{\Gr,\ins}: \fGr_\ins(\Psi|_{\overline{X_g}})\to \Gr(\Z^{g}_2), \quad \frak i_{\Gr,\out}: \fGr_\out(\Psi|_{\overline{X_g}})\to \Gr(\Z^{g}_2).$$
Then,  we can define a map $${\frak f} :\overline{X_g}\to \Gr(\Z^g_2)\x \Gr(\Z^g_2)\; $$
  that sends each $y$ to the pair of subspaces
$$\left( \frak i_{\Gr,\ins}(y, H_1( \ins(\Psi   (y)))),\frak i_{\Gr,\out}(y, H_1( \out(\Psi (y))))\right). $$
Note, for example, here $H_1( \ins(\Psi (y)))$ is viewed as an element of its own Grassmannian. Since $\frak i_\ins,\frak i_\out$ are continuous, we can easily check the map $\frak f$ is continuous: In essence, it all boils down to the simple fact that (by the closedness property of Simon-Smith family) if $\gamma_0$ is a loop in $S^3\backslash\Psi(y_0)$, then it is also a subset of $S^3\backslash\Phi(y)$ for any $y$ close enough to $y_0.$

For any chain $\sigma$ in $\overline{X_g}$, which can be viewed as a map from $\dmn(\sigma)$ into  $\overline{X_g}$, we   define the map  $\frak f_\sigma:=\frak f\circ\sigma$. Now we take $\sigma:=\sigma_3$. Since each member of   $\Psi\circ \partial\sigma_3$ has genus  in the range $[1,g-1]$, we know by Lemma \ref{lem:alexander}  that the linking number bilinear form 
$$L:H_1(\ins(\Psi\circ\partial\sigma_3(y)))\x H_1(\out(\Psi\circ\partial\sigma_3(y)))\to\Z_2$$
has rank within the range  $[1,g-1]$. Thus, by the properties of $\frak i$ described in Proposition \ref{prop:embedHomology}, we in fact know that the image of ${\frak f}_{\partial \sigma_3}$ is a subset of
 $\Gr^{g}[1,g-1]$. Now,  ${\frak f}_{\partial \sigma_3}$ can be treated as  a $(2g-3)$-cycle. Now, we state an important topological fact about this cycle,   for which our whole proof relies on.  

\begin{thm}\label{thm:partialFSigma2}
     $[ {\frak f}_{\partial\sigma_3}]\ne 0$ in $H_{2g-3}(\Gr^g[1,g-1])$.
 \end{thm}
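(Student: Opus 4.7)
The plan is to factor the map $\frak f$ through an auxiliary configuration-space model built from the roots of the defining trigonometric polynomial in \eqref{eq:bigExpression}, use the intersection number condition of Lemma \ref{lem:sigma2A01IntersectionNumber} to show this composite wraps around a distinguished point, and then transfer non-triviality to $\Gr^g[1,g-1]$ via the simplicial-topological identifications recalled in \S\ref{sect:subspaceOfGr} and \S\ref{sect:symProd}.

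\textbf{Step 1 (A configuration-space model).} For any $(a,b) \in \inte(A_1)\times B^{2g-2}$, the intersection of $\Psi(a,b)$ with the core circle $C(a_1,a_2)$ (or its limiting substitute inside $N_1(a_1,a_2)$) is encoded by the roots on $S^1$ of the trigonometric polynomial in the square bracket of \eqref{eq:bigExpression}, shifted by the $a_0 + \sqrt{1-a_1^2-a_2^2}(a_3\cos\alpha+a_4\sin\alpha)$ term. These are at most $2g+2$ points on $S^1$. For $y \in X_g$ they are $2g+2$ distinct points separating $S^1$ into alternating inner/outer arcs, one per handle. Packaging this data gives a continuous map
\[
\bz: \overline{X_g} \to \Sym^{2g+2}(S^1),
\]
sending $X_g$ into the top open stratum of $2g+2$ distinct points and sending $\partial X_g$ into the collision stratum, where root coincidences record the pinchings/merges that drop the genus.

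\textbf{Step 2 (Factoring $\frak f$).} For $y \in X_g$, the alternating arcs produced by $\bz(y)$ identify a preferred basis of $H_1(\ins(\Psi(y)))$ and $H_1(\out(\Psi(y)))$ by meridian loops around the handles. As we approach $\partial X_g$ and two adjacent roots collide, the corresponding handle pinches and a generator on one side is killed while a pair of generators on the opposite side merges. Interpreted through the partial-order topology on $\Gr(\Z_2^g)$ from Definition \ref{defn:partialOrderGr}, this produces a continuous combinatorial map $\Phi_\Gr$ from $\Sym^{2g+2}(S^1)$ (with its stratification from \S\ref{sect:symProd}) into $\Gr(\Z_2^g)\times\Gr(\Z_2^g)$, such that $\frak f = \Phi_\Gr \circ \bz$ up to conjugation by the identifications of Proposition \ref{prop:embedHomology}, and such that $\Phi_\Gr$ sends the collision locus into $\Gr^g[1,g-1]$.

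\textbf{Step 3 (Using the intersection number).} Let $\bz_\ast \in \Sym^{2g+2}_0(S^1)$ be the evenly-spaced configuration of roots of $\cos((g+1)\alpha)$. Inspection of \eqref{eq:bigExpression} at $b=0$ and at the center of $A^0_1$ shows $\bz^{-1}(\bz_\ast)$ consists (up to a controlled perturbation) of the single point $(O_1,0)$, which lies in $\tau'' = F_2(1,\cdot)\circ\tau$. The intersection number $1$ from Lemma \ref{lem:sigma2A01IntersectionNumber} then yields local degree $1$ for $\bz \circ \sigma_3$ at $\bz_\ast$ (chain-homotoping $\tau''$ to a small transverse fiber through $\bz_\ast$ inside $\overline{X_g}$). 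Consequently $\bz \circ \partial\sigma_3$ is a $(2g-3)$-cycle in the punctured complement $\Sym^{2g+2}(S^1) \setminus \{\bz_\ast\}$ linking $\bz_\ast$ with odd multiplicity.

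\textbf{Step 4 (Transfer to $\Gr^g[1,g-1]$).} Using Theorem \ref{lem:homoEquiv} and the contractibility statement of Lemma \ref{lem:contrac}, one verifies that $\Phi_\Gr$ restricts to a weak homotopy equivalence from the link of $\bz_\ast$ in $\Sym^{2g+2}(S^1)$ onto (a deformation retract of) $\Gr^g[1,g-1]$: both sides are weak realizations of the same finite poset of allowed collision patterns, with the partial order matching inclusion of subspaces under $\Phi_\Gr$. This sends the generator of $H_{2g-3}$ detected by Step 3 to $[\frak f_{\partial\sigma_3}]$, which is therefore non-zero.

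\textbf{Main obstacle.} The subtle step is Step 4: establishing that $\Phi_\Gr$ induces an isomorphism (or at least a non-zero map) on $H_{2g-3}$. The $g=2$ toy model in \S\ref{sect:toyModel} shows both sides are homotopy circles and that the twelve combinatorial pinch patterns traverse the circle once, confirming the statement there. For general $g$ the combinatorics becomes more intricate, and one must carefully match the simplicial boundary behavior of $\Sym^{2g+2}_0(S^1)$ (which records cyclic collision patterns) against the poset structure on $\Gr^g[1,g-1]$, possibly by induction on $g$, exploiting that both spaces are constructed from the same data of how $2g+2$ cyclically arranged arcs can merge while keeping the handle rank in $[1,g-1]$.
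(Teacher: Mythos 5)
Your overall strategy—identify $\overline{X_g}$ with the root-configuration space $\Sym^{2g+2}_0(S^1)$, read off $\frak f$ combinatorially from collision patterns of the $2g+2$ roots, and compare finite-poset topologies—is the same as the paper's, but the decisive step is exactly the one you leave open. In Step 4 you assert that your combinatorial map restricts to a weak homotopy equivalence from the collision locus onto ``(a deformation retract of) $\Gr^g[1,g-1]$,'' and your ``main obstacle'' paragraph concedes this is only verified in the $g=2$ toy model. That is the heart of the theorem, so the argument does not close. The paper supplies this in two pieces, neither of which appears in your proposal: (i) Proposition \ref{prop:WeakHomotopyEqui}, that $\frak f$ is a weak homotopy equivalence from $\partial X_g\cap X_{\geq 1}$ onto its \emph{image}, proved via McCord's criterion (Theorem \ref{lem:homoEquiv}, Lemma \ref{lem:contrac}) by showing the preimage of each minimal basic open set is exactly the contractible up-set $V_{y_0}$ (Lemma \ref{lem:preimage}, which requires the geometric argument that a collided pair of roots forces the corresponding handle class to die); and (ii) the observation that one does \emph{not} need the image to be all of, or a retract of, $\Gr^g[1,g-1]$: by Remark \ref{rmk:longestChain} the complex $|\Gr^g[1,g-1]|$ has no cells of dimension greater than $2g-3$, so a nonzero $(2g-3)$-cycle carried with multiplicity one by top simplices of the image subcomplex cannot bound in the ambient complex. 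Your stronger ``deformation retract'' claim is unproven and likely false in general (only pairs of subgroups spanned by the cyclic merging patterns of meridian classes occur in the image), and you offer no substitute for the top-dimension argument.

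A secondary issue is Step 3: a $(2g-3)$-cycle cannot ``link'' the single point $\bz_\ast$ inside the $(2g+1)$-dimensional space $\Sym^{2g+2}_0(S^1)$ (linking a point needs a codimension-one cycle), and ``local degree'' of the $(2g-2)$-chain $\bz\circ\sigma_3$ at a point is not defined. What the intersection-number input of Lemma \ref{lem:sigma2A01IntersectionNumber} actually yields, after replacing $\tau=A^0_1\times 0$ by the $3$-chain $\tau''$ with boundary in $X_0$, is statement \eqref{eqLsigma2NonTri}: $[\partial\sigma_3]\ne 0$ in $H_{2g-3}(\partial X_g\cap X_{\geq 1})$, where the dimensions match because $(2g-3)+3=2g$. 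That is the class the weak homotopy equivalence then transports; the slip is repairable, but as written the linking statement is incorrect, and the unproven Step 4 remains the genuine gap.
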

Note this theorem is not trivially false because $\frak f_{\sigma_3}$ is {\it not} a map into $\Gr^g[1,g-1]$. Indeed, some members of $\Psi\circ{\sigma_3}$ may have genus $g$.
 We will prove this theorem in \S  \ref{sect:ProofPartialFSigma2}.

\subsection{Homology descent for $\Psi|_{\overline{X_g}}$}

Recall that we have a    deformation by pinch-off process, $H:[0,1]\x Y\to\cS_{\leq g}$,  for which $H(0,\cdot)=\Psi$, and  $\Psi':= H(1,\cdot)$ maps into $\cS_{\leq g-1}$. We will still focus on the region $\overline{X_g}\subset Y$, on which $\Psi$ has a consistent choice of inside and outside regions. Hence, to simplify notation, let us in this section abuse notation and assume $H$ has domain $[0,1]\x \overline{X_g}$,

Now, we apply the terminology established in \S \ref{sect:descent}, and consider   the maps 
$${\frak b} ^{H }_\ins:[0,1]\x \overline{X_g}\to \fGr_\ins(\Psi|_{\overline{X_g}}), \quad{\frak b}_\out^{H}:[0,1]\x \overline{X_g}\to \fGr_\out (\Psi|_{\overline{X_g}}).$$
For example, remember that ${\frak b} ^{H }_\ins$ sends each $(t,y)$ to the set of all elements $(y,c)\in \{y\}\x H_1(\ins(\Psi(y)))$  such that $c$ would not have  terminated by time $t$  under the pinch-off process $H(\cdot,y)$ (recall that,  ``$c $  has not terminated by time $t$"  means $c$ descends to some homology class in $H_1(\ins(H(t,y)))$).

Hence, we can define a map
$${\frak F}:[0,1]\x \dmn(\sigma_3)\to \Gr(\Z^g_2)\x \Gr(\Z^g_2)$$
by $$\fF(t,y):= \left(\frak i_{\Gr,\ins}(\fb^{H}_\ins(t,\sigma_3(y))),\frak i_{\Gr,\out}(\fb^{H}_\out(t,\sigma_3(y)))\right).$$
Note, by definition, $\fF(0,\cdot)={\frak f}_{\sigma_3}$. Again, it is easy to check that this map is continuous. 

Finally, we are ready to derive a contradiction. We recall that the genus of $H(1,  y)$ is in the range $[1,g-1]$ for each $y\in\dmn( \sigma_3) $, and the genus of $H(t, y)$ is also in the range $[1,g-1]$ for each $y\in\dmn( \partial\sigma_3)$ and $t\in[0,1]$ (this uses the fact that pinch-off process is genus-non-increasing). As a result, using Lemma \ref{lem:alexander} and the properties of $\frak i$ in Proposition \ref{prop:embedHomology}, we know  the following $(2g-2)$-chain actually lies in $\Gr^g[1,g-1]:$
$$\fF|_{\{1\}\x\dmn(\sigma_3)}+\fF|_{[0,1]\x\dmn(\partial\sigma_3)}.$$
Now, the boundary of this  $(2g-2)$-chain is equal to $\fF|_{\{0\}\x\dmn(\partial\sigma_3)}$, which is the same as the  $(2g-3)$-cycle  ${\frak f}_{\partial \sigma_3}$. This shows $[{\frak f}_{\partial \sigma_3}]$ is  actually trivial in $H_{2g-3}(\Gr^g[1,g-1])$, contradicting Theorem \ref{thm:partialFSigma2}. This finishes the proof of Theorem \ref{thm:mainTopo}.

 \section{Proof of Theorem \ref{thm:mainMinimal}}\label{sect:mainProofMinimal}
 Let $(S^3,\bg_0)$ be the given 3-sphere of positive Ricci curvature. 
 First, recall by Remark \ref{rmk:firstWidth} that the first Simon-Smith width $\sigma_1(S^3)$ is equal to  the area of some  embedded minimal sphere $\Sigma$  in $(S^3,\bg_0)$.   Then by Theorem \ref{thm:TopoMinMax}, to prove Theorem \ref{thm:mainMinimal}, it suffices to construct, for  any constant $\bar\delta>0$ and any genus $g$, a Simon-Smith family $\Xi:X\to\cS_{\leq g}$ such that:
\begin{itemize}
    \item $\Xi$ cannot be deformed via pinch-off processes to a map into $\cS_{\leq g-1}.$ 
    \item For any $x\in X$ with $\fg(\Psi(x))=g$, we have $\area(\Psi(x))<2\area(\Sigma)+\bar\delta.$
\end{itemize} 

\subsection{A  family $\Psi^\delta$ in the unit 3-sphere}
Recall that we defined $$A_{\sing}:=\{[a_1a_2:a_1:a_2:0:0:1]:a^2_1+a^2_2<1\},$$
and for each $a=[a_1a_2:a_1:a_2:0:0:1]\in\RP^5$, $\Phi_5(a)$ is defined to be the union of two spheres, given by $(x_1+a_2)(x_2+a_1)=0$.
 Fix any $0\leq \delta<1$. It is possible to find a continuous $A_{\sing}$-family of diffeomorphisms on $S^3$, 
 $\rho_\delta:A_{\sing}\to \textrm{Diff}(S^3)$, such that for each $a$ the diffeomorphism $\rho_\delta(a)$  maps $\Phi_5(a)$ to the zero set given by \begin{equation}\label{eq:pushed}
     \left((x_1+a_2)+\delta(x_2+a_1)\right)\left((x_2+a_1)+\delta(x_1+a_2)\right)=0.
 \end{equation}
 Geometrically, as $\delta$ varies from 0 to 1, $\rho_\delta(a)$ pushes both spheres in $\Phi_5(a)$ towards the sphere given by $(x_1+a_2)+(x_2+a_1)=0$: See Figure \ref{fig:rho}. Note the singular circle in $\Phi_5(a)$ and in $\rho_\delta(a)(\Phi_5(a))$ are the same.

    \begin{figure}
        \centering
        \makebox[\textwidth][c]{\includegraphics[width=3in]{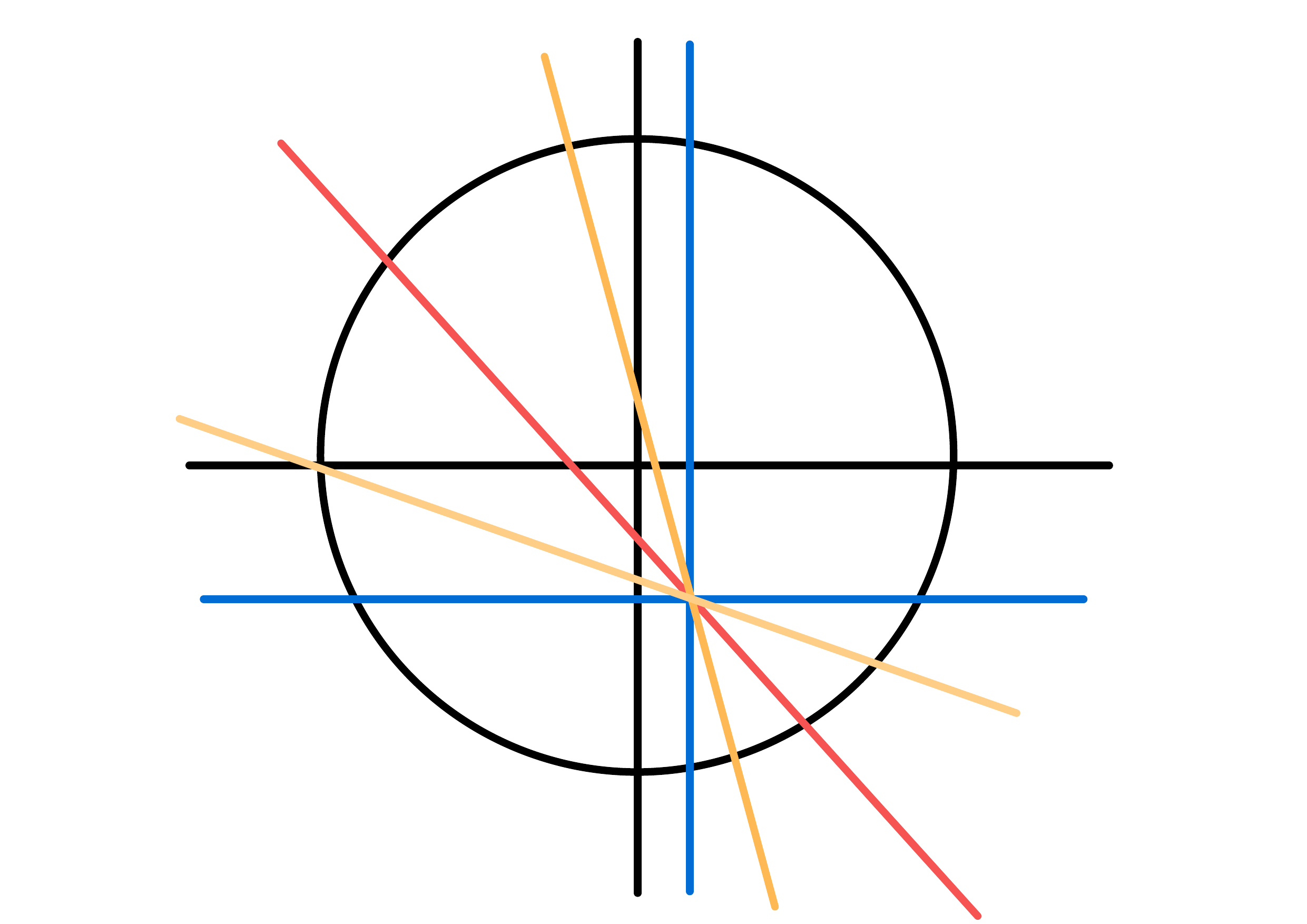}}
        \caption{We just focus on the $x_1,x_2$ coordinates.
        The black lines are the $x_1,x_2$-axes, and the circle is the unit circle. The two blue lines are given by $(x_1+a_2)(x_2+a_1)=0$. Given a fixed $\delta$, they are pushed by $\rho_\delta(a)$ to the orange lines given by (\ref{eq:pushed}). The red line is given by $(x_1+a_2)+(x_2+a_1)=0$.}
        \label{fig:rho}
    \end{figure}

 Now, let us extend $\rho_\delta$ to become a map $\rho_\delta:\RP^5\to\textrm{Diff}(S^3)$: Fix some small constant $\epsilon_3>0$ and pick an  $\epsilon_3$-neighborhood $A_3$ containing $A_{\sing}$ (the exact metric we put on $\RP^5$ would not matter). Let $\rho_\delta:=\id$ outside $A_3$, and let $\rho_\delta$ interpolate between $\rho_\delta|_{A_{\sing}}$ and $\id$ on the intermediate region. Then, we define a map $\Phi_5^\delta:\RP^5\to \cZ_2(S^3;\Z_2)$ by   $$\Phi^\delta_5(a):=\rho_\delta(a)(\Phi_5(a)).$$
 Since $\rho_\delta$ is a continuous family of diffeomorphisms,  the two families $\Phi^\delta_5$ and $\Phi_5$ are ``topologically the same".

 Now, we use $\rho_\delta$ to modify $\Psi$ too. For any constant $0\leq \delta<1$, we define $$\Psi^{ \delta}:\RP^5\x B^{2g-2}\to \cS_{\leq g}$$
 by $\Psi^{\delta}(a,b):=\rho_\delta(a)(\Psi(a,b)).$ 
 \begin{rmk}\label{rmk:closeToMulti2Sphere}
Using Theorem \ref{prop:Psi1} (\ref{Item:Family_Genus0}) and (\ref{item:areaBound}), by choosing the functions $\epsilon_1,\epsilon_2$ (in the definition of $\Psi$) to be sufficiently small, we can assume that any  member of $\Psi$ with non-zero genus is uniformly arbitrarily $\bF$-close to at least one member of $\Phi_5|_{A_{\sing}}$. In fact, by further choosing $\epsilon_1,\epsilon_2$ to be sufficiently small (depending on the definition of $\rho_\delta$ on $A_3$), we can assume any  member of $\Psi^\delta$ of non-zero genus is uniformly arbitrarily $\bF$-close to at least one member of $\Phi^\delta_5|_{A_{\sing}}$, which is defined using (\ref{eq:pushed}).

Note, as $\delta\to 1$, the equation (\ref{eq:pushed}) ``approaches" the equation
\begin{equation}\label{eq:multi2Sphere}
    ((x_1+a_2)+(x_2+a_1))^2=0.
\end{equation}
In other words, all members of  $\Phi^\delta_5|_{A_{\sing}}$, and thus all members of $\Psi^\delta$ with  non-zero genus,  will be close as varifolds to ``the sphere (\ref{eq:multi2Sphere}) with multiplicity two".
 \end{rmk}

\subsection{A family $\Xi$ in $(S^3,\bg_0)$}
Now, by Theorem 1.8 in Haslhofer-Ketover's work \cite{HK19},  there exists a  foliation $\{\Sigma_t\}_{t\in [-1,1]}$ such that:
\begin{itemize}
\item $\Sigma_{-1}$ and $\Sigma_1$ both consist of just a point, while $\Sigma_t$ are smooth embedded spheres for $t\in (-1,1)$.
\item $\Sigma_{0}$ is the given minimal sphere $\Sigma$.
\item For any $t\in (0,1)$ (resp. $t\in(-1,0)$), $\Sigma_t$ is a smooth mean convex (resp. mean concave) sphere with area strictly less than $\area(\Sigma)$.
\end{itemize}
The construction of this foliation uses mean curvature flow with surgery: See \cite{haslhoferKleiner2017MCFsurgery,brendleHuisken2018MCFsurgery,buzanoHaslhoferHershkovits2021moduli}.

Now, let us consider two foliations. First, we have the above foliation $\cF_1$ in $(S^3,\bg_0)$. Second, in the {\it unit $3$-sphere}, we consider  the following foliation $\cF_2$ by parallel round spheres:
$$\{x_1+x_2+s=0\}_{s\in [-\sqrt2,\sqrt 2]}.$$
Note, $s=\pm\sqrt 2$ correspond to antipodal points in the unit 3-sphere.

\begin{rmk}\label{rmk:foliation}
By choosing   $\delta$ to be sufficiently close to $1$, we can ensure that for any $(a_1,a_2)\in \bD$, the solution set of (\ref{eq:multi2Sphere}) is arbitrarily close as varifolds to some leaf of  $\cF_2$ with multiplicity two (uniformly in $a_1,a_2$).
\end{rmk}

Now, we choose a diffeomorphism $\varphi:S^3\to S^3$  that sends $\cF_2$ to  $\cF_1$, which means $\varphi$ injectively sends  each leaf of $\cF_2$ to some leaf of $\cF_1$. 

With the above preparation, we can now define the desired family $\Xi$ for proving  Theorem \ref{thm:mainMinimal}. Recall that we are given some metric $\bg_0$,  embedded minimal sphere $\Sigma$, positive integer $g$, and $\bar\delta>0$. Now, we define a Simon-Smith family $\Xi:\RP^5\x B^{2g-2}\to\cS_{\leq g}$ by
$$\Xi(a,b):=\varphi(\Psi^\delta(a,b))=\varphi\circ\rho_\delta(a)\circ\Psi(a,b).$$
Using Remark \ref{rmk:closeToMulti2Sphere} and \ref{rmk:foliation}, it is easy to check that, by choosing $\delta$ to be sufficiently close to 1, and also $\epsilon_1,\epsilon_2$ to be sufficiently small (all depending on $\bg_0,\Sigma,g,\bar \delta$), we can ensure that all members of $\Xi$ with  non-zero genus to be arbitrarily close as varifolds to some leaf of $\cF_1$ with multiplicity two (uniformly in $(a,b)$). Hence, we can guarantee that, for any $y$ such that $\fg(\Xi(y))=g$, we have $\area(\Xi(y))<2\area(\Sigma)+\bar\delta$, as desired. 

Hence, by the discussion at the beginning of  \S \ref{sect:mainProofMinimal}, Theorem \ref{thm:mainMinimal} follows.

\section{Proof of Theorem \ref{prop:Psi1}}\label{sect:ProofPsi1}

To prove the theorem, we  first need to understand the geometry  of the members of  $\Psi$.
\subsection{Description of members of $\Psi$.}\label{sect:descrption}
We will describe the surfaces $\Psi(a,b)$ for $(a,b)\in\RP^5\x B^{2g-2}$  in four parts:
\begin{enumerate}[label=(\alph*)]
    \item $a \in\overline{\RP^5\backslash (A_1\cup A_2)}$.
    \item The part $\Psi(a,b)\cap \overline{S^3\backslash N_1(a_1,a_2)}$ for $ a\in A_1\cup A_2$. 
    \item The part $\Psi(a,b)\cap N_1(a_1,a_2)$ for $ a\in A_1$.
    \item The part $\Psi(a,b)\cap N_1(a_1,a_2)$ for $ a\in A_2$.
\end{enumerate} 
Part (c) is the most interesting, since it would be the only part for we see  non-zero genus.

\subsubsection{Part (a)}
In \cite[Lemma 7.1 (iii)] {ChuLiWang2025GenusTwoI}, we have shown that $\Phi_5(a)$ must map into $\cS_0$ whenever $a\notin A_{\sing}$. Hence,  $\Psi(a,b)\in\cS_0$ for any $(a,b)\in \overline{\RP^5\backslash (A_1\cup A_2)}\x B^{2g-2}$, as $\Psi(a,b)=\Phi_5(a)$.

\subsubsection{Part (b)}
 For both part (b) and (d), the surfaces concerned are defined as the zero sets given by 
\begin{align}\label{eq:partB}
    0=\;&(x_1+a_2)(x_2+a_1)+(a_0-a_1a_2)+(1-\psi(a,x))\sqrt{1-x_1^2-x_2^2}(a_3\cos\alpha+a_4\sin\alpha)\\
   \nonumber  &+\psi(a,x)\sqrt{1-a_1^2-a_2^2}(a_3\cos\alpha+a_4\sin\alpha).
\end{align}
Having first chosen the function $\eta$, we can choose $\epsilon_1$ sufficiently small, which forces $a_0-a_1a_2,a_3,a_4$ to be small (depending on $\eta$), such that within the annular neighborhood $N_2$  of the  circle of intersection  $C(a_1,a_2)$, the zero set above is homeomorphic to four copies of $S^1\x [0,1]$ (see Figure \ref{fig:fourPlane}). The reason is that, the original surface $\Phi_5(a)$ is smooth within $N_2$, so within $N_2$ all zero sets given  by (\ref{eq:partB}), being smooth perturbations of $\Phi_5(a)\cap N_2$, must be geometrically the same as $\Phi_5(a)\cap N_2.$ As a result, we can assume that $\Psi(a,b)\cap \overline{S^3\backslash N_1(a,b)}$ is homeomorphic to the union of two intersecting spheres with a  neighborhood  of the intersection circle  removed.

   \begin{figure}[h]
        \centering
        \makebox[\textwidth][c]{\includegraphics[width=3in]{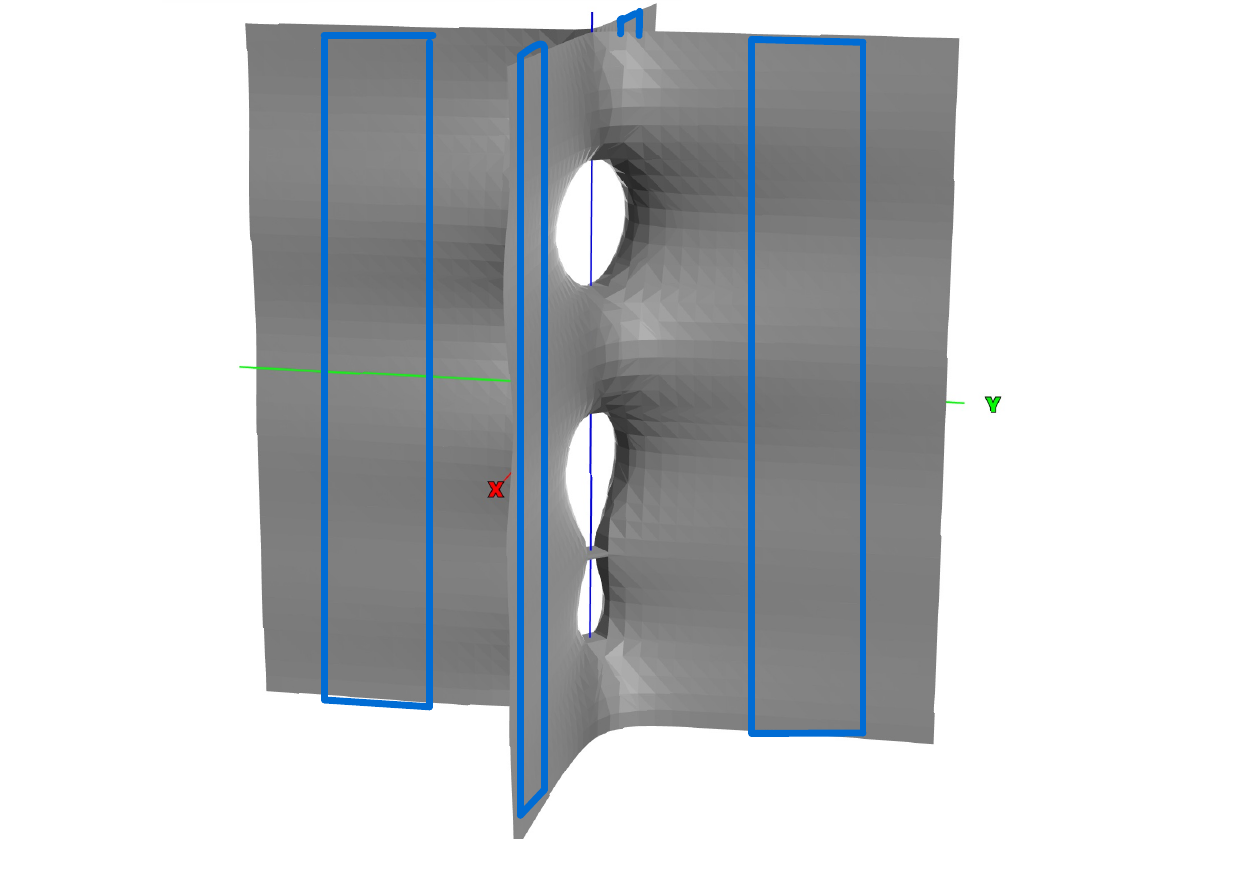}}
        \caption{This shows an example of $\Psi(a,b)$ for $a\in A_1$. The vertical axis denotes the circle $C(a_1    ,a_2)$. The four blue planes denote $\Psi(a,b)\cap N_2$, which is homeomorphic to  four copies of $S^1\x [0,1]$.}
        \label{fig:fourPlane}
    \end{figure}

\subsubsection{Part (c)}   For $a\in A_1$, $\Psi(a,b)\cap N_1$ is given by the zero set of 
\begin{align*}       &\;\;(x_1+a_2)(x_2+a_1)+(a_0-a_1a_2)+ \sqrt{1-a_1^2-a_2^2}(a_3\cos\alpha+a_4\sin\alpha)\\
        +&\;\epsilon_2(a_1^2+a_2^2)\cdot (\epsilon_1(a_1^2+a_2^2)-\|(a_0-a_1a_2,a_3,a_4)\|)\;\cdot\\
&\left[b_2\cos2\alpha+b'_2\sin2\alpha+...+b_g\cos g\alpha+b'_g\sin g\alpha+(1-\norm{b})\cos(g+1)\alpha\right]
\end{align*}
For simplicity we denote this expression as  $(x_1+a_2)(x_2+a_1)+F_{(a,b)}(\alpha)=0$, where $F_{(a,b)}(\alpha)$ is a trigonometric polynomial of degree $\leq g+1$. Importantly, $F_{(a,b)}(\alpha)$ has at most $2g+2$ roots in $[0,2\pi)$. Using this fact, we can describe the geometry of this zero set within $N_1(a_1,a_2)$. Namely, let us intersect it with the ``plane" $\alpha=\alpha_0$ in $N_1$, with $\alpha_0$ varying from $0 $ to $2\pi$, and examine the intersection curve. Here it would be convenient to view $N_1$ not as a subset of $S^3$, but as the original form given by (\ref{eq:N1}). Also, as before, we will assume  $\epsilon_1$ to be sufficiently small, depending on the already chosen  function $\eta$.

Whenever $F_{(a,b)}(\alpha_0)$ changes sign, say from positive  to negative, the intersection curve $\Psi(a,b)\cap N_1\cap \{\alpha=\alpha_0\}$ will undergo a topological change: It changes from the hyperpola $x_1x_2=1$, to the cross $x_1x_2=0$, and then the hyperpola $x_1x_2=-1$. But since $F_{(a,b)}$ has at most $2g+2$ roots, such topological change of the intersection curve can only happen at most $2g+2$ times.  Geometrically, if we choose the function $\epsilon_2>0$ to be small enough, depending on $\epsilon_1,\eta$, the surfaces $\Psi(a,b)\cap N_1$ ```opens up" the set 
$$(x_1+a_2)(x_2+a_1)=0$$
along the singular curve $C(a_1,a_2)$ by inserting at most $g+1$ handles, with $\epsilon_2$ controlling how much to open up. Of course, the sizes and distances of the handles can vary, and they might even pinch or merge with each other. From this, we can  deduce that $\Psi(a,b)\cap N_1$ has at most genus $g$.

More precisely, let $N_\odd(F_{(a,b)})$ denote the  number of roots of $F_{(a,b)}(\alpha)$ (in $\R/2\pi\Z$) of  {\it odd multiplicity}. Note, if $\alpha_1$ is  a root of odd (resp. even)  multiplicity, then as $\alpha_0$ increases and passes through $\alpha_1$,  $F_{(a,b)}(\alpha_0)$ would (resp. would not)  change sign, and thus the intersection curve $\Psi(a,b)\cap N_1\cap \{\alpha=\alpha_0\}$ would (resp. would not) change its topological type (from type $x_1x_2=+1$ to $x_1x_2=-1$ or vice versa).   Based on this observation, it follows immediately that we have the relation
$$\fg(\Psi(a,b)\cap N_1)=\frac 12 N_\odd(F_{(a,b)})-1.$$

\begin{rmk}\label{rmk:descriptionSurf}
    Let us summarize part (b) and (c), and describe the shape of surface $\Psi(a,b)$ for any  $(a,b)\in A_1\x B^{2g-2}$.  Outside the solid torus $N_1(a,b)$, the piece $\Psi(a,b)\cap \overline{S^3\backslash N_1(a_1,a_2)}$ is homeomorphic to a disjoint  union of four discs.
    
    As for inside the solid torus $N_1(a_1,a_2)$, let us cut it along $\alpha=0$ and pretend for a moment that it is the solid cylinder $$\{(x_1,x_2)\in\R^2:\|(x_1,x_2)+(a_2,a_1)\|\leq\eta(a_1,a_2) \}\x [0,2\pi).$$Then inside this solid cylinder, $\Psi(a,b)$ looks like two sheets (i.e. topologically $\R^2$) with $\leq g+1$ handles bridging them: See Figure (\ref{fig:fourPlane}). These handles are all near the axis $\{(-a_2,-a_1)\}\x [0,2\pi)$ (corresponding to the circle $C(a_1,a_2)$), and they intersect the axis at  $\leq 2g+2$  points, all of the form  $(-a_2,-a_1,\alpha)$, where $\alpha\in [0,2\pi)$ is any of the real roots of the trigonometric polynomial $F_{(a,b)}(\alpha)$. The handles can pinch or merge with each other, creating isolated singularities in $\Psi(a,b)$. In fact, these singularities take the form $(-a_2,-a_1,\alpha)$, where $\alpha$ is any root of multiplicity $\geq 2$ for  $F_{(a,b)}$.

Now, let $N_\odd(F_{(a,b)})$ denote the  number of roots of $F_{(a,b)}(\alpha)$ (in $\R/2\pi\Z$) that have  odd multiplicity.  Then $$\fg(\Psi(a,b))=\frac 12 N_\odd(F_{(a,b)})-1.$$
\end{rmk} 

\subsubsection{Part (d)}\label{sect:PartD}
  As for the part $\Psi(a,b)\cap N_1(a_1,a_2)$ for $a\in A_2$, we first examine the special case of $a\in A_1\cap A_2$. In this case, the zero set defining $\Psi(a,b)$  is given by
\begin{align*}
    (x_1+a_2)(x_2+a_1)+(a_0-a_1a_2)+  \sqrt{1-a_1^2-a_2^2}(a_3\cos\alpha+a_4\sin\alpha)=0.
\end{align*}
Hence, if $\epsilon_1$ is small enough (depending on $\eta$), using the discussion for part (c) above we immediately see that $\Psi(a,b)\cap N_1(a_1,a_2)$ is homeomorphic of one of three types: (1) A cylinder, (2) two discs, or (3) two discs touching at an interior point: See  Figure \ref{fig:3types}.

   \begin{figure}[h]
        \centering
        \makebox[\textwidth][c]{\includegraphics[width=\textwidth]{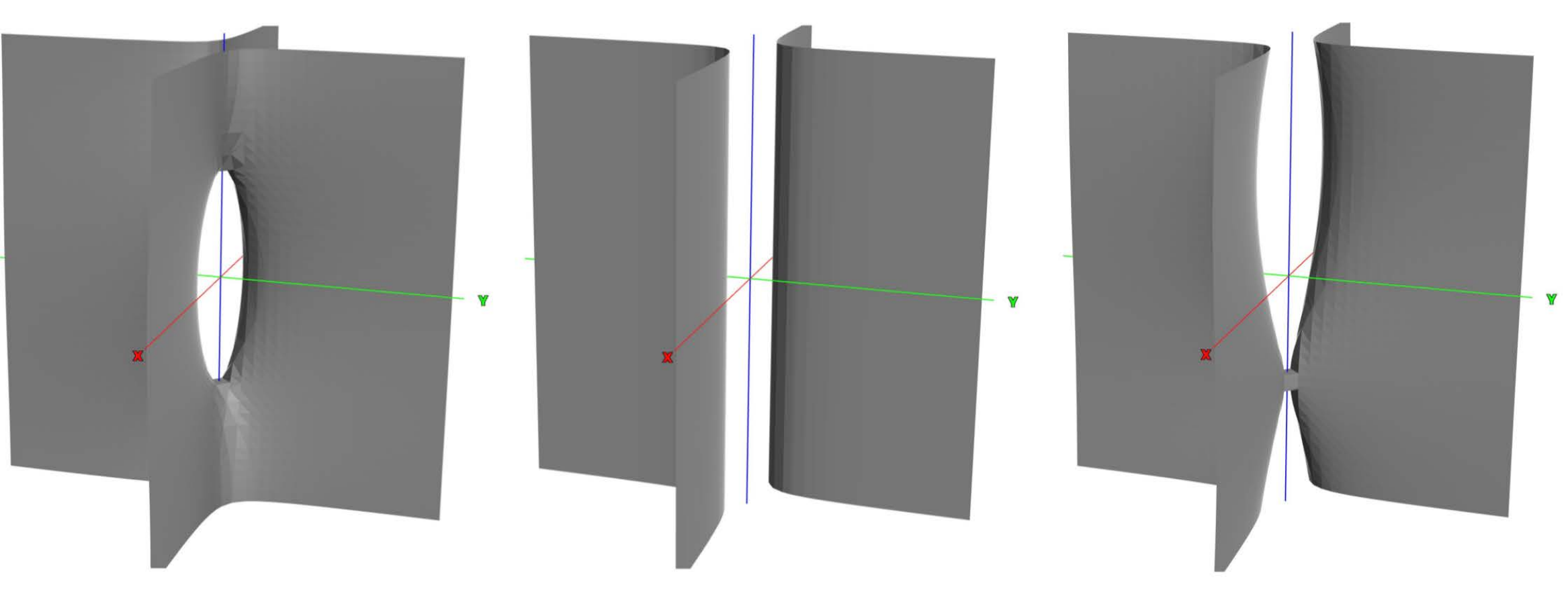}}
        \caption{From left to right, we type type (1), (2), and (3).}
        \label{fig:3types}
    \end{figure}

For the general case  $a\in A_2$, the surface concerned is given by the zero set of 
\begin{equation}\label{eq:tildeFa}
    (x_1+a_2)(x_2+a_1)+(a_0-a_1a_2)+\varsigma(a,x)(a_3\cos\alpha+a_4\sin\alpha),
\end{equation}
where $$\varsigma(a,x):=(1-\psi(a,x))\sqrt{1-x_1^2-x_2^2}+\psi(a,x)\sqrt{1-a^2_1-a^2_2},$$
which is close to $\sqrt{1-a^2_1-a^2_2}$ as $x\in N_1(a_1,a_2)$.
Let $\tilde F_a(x_1,x_2,\alpha)$ denote the expression (\ref{eq:tildeFa}).  Now, we can reuse the  proof of \cite[Lemma 7.7 (a), (b)]{ChuLiWang2025GenusTwoI} (applied to the function $\tilde F_a$ instead of $\tilde F_a$ therein) to show that: {\it Assume the functions $\epsilon_1,\eta>0$ are chosen small enough. Then for any $a\in A_2$, there are at most $2$ choices of  $(x_1,x_2,\alpha)\in N_1$   such that $\nabla\tilde F_a(x_1,x_2,\alpha)=0$, in which   $\nabla$ is only taken with respect to $(x_1,x_2)$, and at the same time $\tilde F_a(x_1,x_2,\alpha)=0$.}
Since the calculation is essentially the same as in \cite[Lemma 7.7 (a), (b)]{ChuLiWang2025GenusTwoI}, we move the proof to Appendix  \ref{appendix:countRoots}.

As a result, when we slice the surface with the horizontal planes $\{\alpha=\alpha_0\}$ for $\alpha_0\in [0,2\pi)$, we can only see topological change for the intersection curve at those $\leq 2$ choices of  $(x_1,x_2,\alpha)$.  Then  it follows that  $\Psi(a,b)\cap N_1(a_1,a_2)$  can only take one of the three forms described at the end of last  paragraph.

\subsection{Simon-Smith family}
From the above description, it is easy to check that whenever $a\in A_1\cup A_2$,  $\Psi$ maps into $\cS_{\leq g}$.  As for part (a), $a\in \overline{\RP^5\backslash (A_1\cup A_2)}$, we already pointed out above that $\Psi(a,b)\in \cS_0$. Thus, $\Psi$ indeed maps into $\cS_{\leq g}$. 

We now explain why  $\Psi$ is a Simon-Smith family. On the part $$\inte(\RP^5\backslash (A_1\cup A_2))\x B^{2g-2},$$ we have $\Psi(a,b):=\Phi_5(a)$, so this claim is already checked in   \cite[\S 7] {ChuLiWang2025GenusTwoI}: It fact, it is a consequence of Theorem 3.4 therein. As for the part $\inte(A_1\cup A_2)\x B^{2g-2}$, and the part that is near $\partial(A_1\cup A_2)\x B^{2g-2}$ but excluding $\partial A_{\sing}\x B^{2g-2}$, the claim is easy to using the analysis in \S \ref{sect:descrption}. Finally, as for the part near $\partial A_{\sing}\x B^{2g-2}$, we  fix any $(a,b)\in \partial A_{\sing}\x B^{2g-2}$. Then $a_1^2+a_2^2=1$, so that $a_3=a_4=0$ and  $\Psi(a,b)$ is given by $$(x_1+a_2)(x_2+a_1)=0.$$
If   $a_1,a_2$ are both non-zero, this is a union of two round spheres touching each other at the point $(-a_2,-a_1,0,0)\in S^3$. If one of $a_1,a_2$ is zero, then this zero set is just one equatorial sphere. 

Now, we observe that, for $(\hat a,\hat b)\in (A_1\cup A_2)\x B^{2g-2}$, as $(\hat a,\hat b)\to (a,b)$ in $\RP^5\x B^{2g-2}$, the  circle $C(\hat a_1,\hat a_2)$  shrinks, and tends to the point $(-a_2,-a_1,0,0)$. So the handles of $\Psi(\hat a,\hat b)$ described in  \S \ref{sect:descrption} must also shrink accordingly. Based on this, it can be checked that as $(\hat a,\hat b)\to (a,b)$ in $Y$, $\Psi(\hat a,\hat b)\to \Psi(a,b)$ smoothly away from the point $(-a_2,-a_1,0,0)$. So $\Psi$ is indeed also a Simon-Smith family near $\partial A_{\sing}\x B^{2g-2}$.

\subsection{Proof of items of Theorem \ref{prop:Psi1}}\label{sect:proofItemTheoremPsi}
For item (\ref{Item:Family_Genus0}), the claim that $\Psi|_{\overline{\RP^5\backslash A_1}\x B^{2g-2}}$ is of genus $0$ follows from the  discussion in \S \ref{sect:descrption} for surfaces in part (a), (b), and (d). 

For item (\ref{Item:Family_5-sweepout}), since the parameter space of $\Psi|_{\RP^5\x \{0\}}$ is an $\RP^5$, it suffices to show that it has a 1-sweepout. We consider the curve given by  $a_2=a_3=a_4=a_5=0$. This gives the zero sets $\{a_0+a_1x_1=0\}$, which is a 1-sweepout.

For item (\ref{Item:Family_Genus1}), in order to show that $\Psi|_{\RP^5\x \partial B^{2g-2}}$ is of genus $\leq g-1$,  it suffices to show that $\Psi(a,b)$ has  genus $\leq g-1$ for any $a\in A_1$, because of item (\ref{Item:Family_Genus0}). This claim  follows easily from the  description in  Remark \ref{rmk:descriptionSurf}, and the fact that the trigonometric polynomial $F_{(a,b)}$ has degree $\leq g$ for $b\in\partial B^{2g-2}$.

For item (\ref{item:topoSame}), we recall that for any $(a,b)\in A_1\x B^{2g-2}$, the surface $\Psi(a,b)$  is the zero set of $(x_1+a_2)(x_2+a_1)+F_{(a,b)}(\alpha)=0$ where the trigonometric polynomial $F_{(a,b)}(\alpha)$ takes the form
\begin{align*}       &\; (a_0-a_1a_2)+ \sqrt{1-a_1^2-a_2^2}(a_3\cos\alpha+a_4\sin\alpha)\\
        +&\;\epsilon_2(a_1^2+a_2^2)\cdot (\epsilon_1(a_1^2+a_2^2)-\|(a_0-a_1a_2,a_3,a_4)\|)\;\cdot\\
&\left[b_2\cos2\alpha+b'_2\sin2\alpha+...+b_g\cos g\alpha+b'_g\sin g\alpha+(1-\norm{b})\cos(g+1)\alpha\right]
\end{align*}
Now, the geometry of the surfaces $\Psi(a,b)$ was discussed in  Remark \ref{rmk:descriptionSurf}. Essentially, the locations and sizes of the handles of the surfaces are determined by the roots of the  trigonometric polynomial $F_{(a,b)}$. 
\begin{lem} There exists a continuous map  $$P:\inte(A_1)\x B^{2g-2}\to \inte(A^{(0,0)})\x B^{2g-2}$$ such that:
\begin{itemize}
\item  For any $c\in\inte(\bD)$, the restriction
    $$P|_{\inte(A^{c}_1)\x B^{2g-2}}$$ gives a homeomorphism from $\inte(A^{c}_1)\x B^{2g-2}$ onto $\inte(A^{(0,0)}_1)\x B^{2g-2}$. 
    \item For each $(a,b)$,   the trigonometric polynomials $F_{(a,b)}$ and $F_{P(a,b)}$ are the same up to some positive  factor, so that they have the same roots.
\end{itemize}
\end{lem}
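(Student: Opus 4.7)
The plan is to trivialize the family of trigonometric polynomials $\{F_{(a,b)}\}$ inside the positive projectivization of their coefficient space and then compose these trivializations to build $P$. Each $F_{(a,b)}$ has the form
\[F_{(a,b)}(\alpha) = A_0 + A_1\cos\alpha + B_1\sin\alpha + \sum_{k=2}^{g+1}A_k\cos k\alpha + \sum_{k=2}^{g}B_k\sin k\alpha,\]
and I identify it with its coefficient vector in the $(2g+2)$-dimensional space $V$. With $c=(a_1,a_2)$ and $r=|c|$, a direct reading of the defining formula gives $A_0=a_0-a_1a_2$, $(A_1,B_1)=\sqrt{1-r^2}(a_3,a_4)$, $(A_k,B_k)=\lambda(b_k,b_k')$ for $2\le k\le g$, and $A_{g+1}=\lambda(1-\|b\|)$, where $\lambda=\epsilon_2(r^2)\bigl(\epsilon_1(r^2)-\|(a_0-a_1a_2,a_3,a_4)\|\bigr)$. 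The key identity
\[S:=\sqrt{\sum_{k=2}^{g}(A_k^2+B_k^2)}+A_{g+1}=\lambda\]
expresses $\lambda$ in terms of the high-frequency coefficients of $F_{(a,b)}$ alone.

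Next, pass to the positive projectivization $\mathcal{P}_+:=(V\setminus\{0\})/\R_{>0}$ and, for each $c\in\inte(\bD)$, consider the map $\psi_c\colon\inte(A^c_1)\times B^{2g-2}\to\mathcal{P}_+$ sending $(a,b)$ to the class $[F_{(a,b)}]$. I claim $\psi_c$ is a homeomorphism onto the single open subset
\[U:=\bigl\{[F]\in\mathcal{P}_+:A_{g+1}\ge 0,\ S>0\bigr\},\]
and in particular that $U$ does not depend on $c$. Indeed, given any class $[F]\in\mathcal{P}_+$ with $A_{g+1}\ge 0$ and $S>0$, a positive rescaling $\mu F$ arises from some $(a,b)\in\inte(A^c_1)\times B^{2g-2}$ precisely when the normalization
\[\mu S=\epsilon_2(r^2)\bigl(\epsilon_1(r^2)-\mu\|(A_0,A_1/\sqrt{1-r^2},B_1/\sqrt{1-r^2})\|\bigr)\]
holds together with the open constraint $\mu\|(A_0,A_1/\sqrt{1-r^2},B_1/\sqrt{1-r^2})\|<\epsilon_1(r^2)$. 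The normalization is linear in $\mu$, with unique positive solution
\[\mu=\frac{\epsilon_2(r^2)\epsilon_1(r^2)}{S+\epsilon_2(r^2)\|(A_0,A_1/\sqrt{1-r^2},B_1/\sqrt{1-r^2})\|},\]
and substituting this $\mu$ back into the open constraint simplifies it to the $c$-independent condition $S>0$. Injectivity, continuity of $\psi_c$ and $\psi_c^{-1}$, and joint continuity in $c$ all follow from these explicit formulas.

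With this in hand, define
\[P(a,b):=\psi_{(0,0)}^{-1}\bigl(\psi_{(a_1,a_2)}(a,b)\bigr).\]
Continuity of $P$ and the fact that it restricts to a homeomorphism on each slice $\inte(A^c_1)\times B^{2g-2}\to\inte(A^{(0,0)}_1)\times B^{2g-2}$ are then immediate. By construction $[F_{P(a,b)}]=[F_{(a,b)}]$ in $\mathcal{P}_+$, so $F_{P(a,b)}$ is a positive scalar multiple of $F_{(a,b)}$ and the two polynomials share their roots in $\R/2\pi\Z$. The main obstacle is the claim that the image of $\psi_c$ is $U$ independently of $c$: the defining data involves $\sqrt{1-r^2}$, $\epsilon_1(r^2)$, and $\epsilon_2(r^2)$ in seemingly delicate ways, but once one multiplies through the normalization by $\mu$ and substitutes the explicit formula for $\mu$, all of this $c$-dependence is absorbed into the single scalar, leaving the clean pair of $c$-independent conditions $A_{g+1}\ge 0$ and $S>0$ to cut out the image.
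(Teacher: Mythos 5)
Your proposal is correct and takes essentially the same route as the paper: the paper normalizes the coefficient vector of $F_{(a,b)}$ to the unit sphere via $\hat\proj\circ\tilde T^{c}$, shows each slice $\inte(A^c_1)\x B^{2g-2}$ maps homeomorphically onto the common target $S^{2g+1}_+\backslash Q$ (which is exactly your set $U$, since $A_{g+1}\ge 0$ together with $S>0$ says the $\cos(g+1)\alpha$ coefficient is nonnegative and the high-frequency coefficients do not all vanish), and then defines $P$ by composing these slice trivializations, just as you do with the positive projectivization. Your explicit solution for the scaling factor $\mu$ simply replaces the paper's monotonicity-along-rays injectivity argument and its softer surjectivity check; the only cosmetic slip is calling $U$ open in $\mathcal{P}_+$ (the condition $A_{g+1}\ge 0$ is closed), but nothing in your argument uses openness, only that each $\psi_c$ is a homeomorphism onto $U$ with its subspace topology.
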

Once we have this lemma, the construction of the desired family $\varphi$ of diffeomorphisms and the second bullet point of Theorem \ref{prop:Psi1} (\ref{item:topoSame}) follow in an elementary way, based on  Remark \ref{rmk:descriptionSurf}. So below we will only give the proof of this lemma.

\begin{proof}
For each fixed $c\in\inte(\bD)$, consider the map  $T^{c}:A^{c}_1\x B^{2g-2}\to \R^{2g+1}$ that sends $(a,b)$ to the point
\begin{align*}
    (&\textrm{the constant term of } F_{(a,b)},\\
    &\textrm{the coefficient of } \cos\alpha \textrm{ in } F_{(a,b)},\\
    &\textrm{the coefficient of } \sin\alpha \textrm{ in } F_{(a,b)},\\
    &...\\
    &\textrm{the coefficient of } \cos g\alpha \textrm{ in } F_{(a,b)},\\
    &\textrm{the coefficient of } \sin g\alpha \textrm{ in } F_{(a,b)}).
\end{align*}
From the definition of $F_{(a,b)}$, it is easy to see that this map sends $\inte(A^{c}_1)\x B^{2g-2}$ homeomorphically onto its image
and that $T^c(a,b)=T^c(a,b')$ whenever $a\in \partial A^c_1$.
Then consider the map $\tilde T^{c}:A^{c}_1\x B^{2g-2}\to\R^{2g+2}$ that sends each $(a,b)$ to $$(T^{c}(a,b),\textrm{the coefficient of } \cos (g+1)\alpha \textrm{ in } F_{(a,b)}).$$
Let $\hat \proj:\R^{2g+2}\to S^{2g+1}$ be the projection map onto the unit sphere, and let $S^{2g+1}_+$ be the closed upper hemisphere, where the $(2g+2)$-th coordinate $x_{2g+2}$ is non-negative.

Let $Q\subset \R^{2g+2}$ be the subspace given by $x_4=x_5=...=x_{2g+2}=0$.
For each $c$, we claim that the map $\hat \proj\circ \tilde T^{c}$ sends $\inte(A^{c}_1)\x B^{2g-2}$ homeomorphically onto 
$S^{2g+1}_+\backslash Q$. Once we have this claim, we can just define $P$ by $$P(a,b):=(\hat \proj\circ \tilde T^{(0,0)})^{-1}\circ (\hat \proj\circ \tilde T^{(a_1,a_2)})(a,b),$$and then the lemma follows immediately.

So let us now prove the claim. To prove that $\hat \proj\circ \tilde T^{c}$ is injective, we first observe the following: Fixing any $(a,b)\in \inte(A^{c}_1)\x B^{2g-2}$, as $t$ increases from $0$ to $1$, the $(2g+2)$-th coordinate of $$\tilde T^{c}([t(a_0-a_1a_2):a_1:a_2:ta_3:ta_4:1],tb),$$ which is
$$(\epsilon_1(a_1^2+a_2^2)-t\|(a_0-a_1a_2,a_3,a_4)\|)(1-t\|b\|),$$
is strictly decreasing in $t$. Then, together with the previous fact that $T^c$ sends $\inte(A^{c}_1)\x B^{2g-2}$ homeomorphically onto its image, the injectivity of $\hat \proj\circ \tilde T^{c}$ follows.

As for the surjectivity of $(\hat \proj\circ \tilde T^{c})|_{\inte(A^c_1)\x B^{2g-2}}$ onto $S^{2g-2}_+\backslash Q$, we first note the following facts:
\begin{itemize}
    \item $\tilde T^{c}$ maps into the upper half-space $\{x_{2g+2}\geq 0\}\subset \R^{2g+2}$.
    \item $\tilde T^{c}$ maps $\partial(A^c_1\x B^{2g-2})$ into $\{x_{2g+2}=0\}\subset\R^{2g+2}$.
    \item $T^{c}$ maps $\inte(A^c_1\x B^{2g-2})$ homeomorphically onto its image, which contains the origin.
\end{itemize}
Based on these facts the desired  surjectivity  follows directly.  

Hence, the claim that $\hat \proj\circ \tilde T^{c}$  gives a homeomorphism $\inte(A^{c}_1)\x B^{2g-2}\to S^{2g+1}_+\backslash Q$ follows easily. This finishes the proof of the lemma.
\end{proof}

We return to proving the items in Theorem \ref{prop:Psi1}. For item (\ref{item:areaBound}),
the claim for $(a,b)\in \overline{\RP^5\backslash (A_1\cup A_2)}\x B^{2g-2}$ is clear, as  $\Psi(a,b):=\Phi_5(a)$ for each $(a,b)$. As for $a$ in $A_1\cup A_2$, the claim can also be easily verified: Intuitively, the surface $\Psi(a,b)$ only opens up $\Phi_5(a)$ along the intersection circle $C(a_1,a_2)$ by a small amount, if $\epsilon_1,\epsilon_2$ are small. 

This finishes the proof of Theorem \ref{prop:Psi1}.

\section{Proof of Lemma \ref{lem:capProd}}\label{sect:ProofCapProd}

Recall that we defined $Z_0\subset Y$ as the set of all $y\in Y$ such that $\Psi'(y)=0$. Note $Z_0$ is compact. So by Proposition \ref{prop:pinchOffg0g1}, there exists a subcomplex $Z\subset Y$ (after refinement), whose interior contains $Z_0$, such that $\Psi'|_Z$ can be deformed via pinch-off processes to become some map into $\cS_0$. As a result, using the fact that there does not exist a Simon-Smith family of genus 0 that is a 5-sweepout (Proposition \ref{prop:no5sweepoutGenus0}), we know that $\lambda^5|_{Z}=0$, and in particular $\lambda^5|_{\inte(Z)}=0$.

We now state a purely algebraic topological fact, which is a special case of Lemma 3.11 of \cite{ChuLiWang2025GenusTwoI} (by putting $A:=\emptyset$ and $C:=X$ therein).
\begin{lem}
    In this lemma,  we fix  an arbitrary commutative ring for the coefficients of homology and cohomology. 
    Let $X$ be a (not necessarily finite) simplicial complex, and $B\subset X$ be an open subset. Let $p\geq q\geq 0$. 
    Let $C$ be a finite union of $p$-subcomplexes of $X$ with $\spt(\partial C)\subset B$ (so that 
     $[C]\in H_{p}(X,B)$), and   $\omega\in H^q(X)$. 
    Suppose that: 
    \begin{itemize}
        \item $C$ is a union of two open subsets $W_1,W_2\subset C$.
        \item The pullback of $\omega$ under the inclusion $W_1\hookrightarrow X$ is zero.
    \end{itemize}
    Then there exists some  $\theta\in H_{p-q}(W_2,W_2\cap B)$  such that the pushforward of $\theta$ under the inclusion $(W_2,W_2\cap B)\hookrightarrow (X,B)$ is equal to
    $$[C]\frown\omega\in H_{p-q}(X,B).$$ 
\end{lem}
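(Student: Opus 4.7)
The plan is to produce a chain-level representative for $[C] \frown \omega$ whose support lies in $W_2$; this simplicial chain then directly realizes the desired class $\theta \in H_{p-q}(W_2, W_2 \cap B)$. The key input will be a simplicial cocycle representing $\omega$ that vanishes on a simplicial approximation of $W_1$.

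First I would pass to a sufficiently fine barycentric subdivision of $X$. After this refinement I may assume that $B$ is a subcomplex (since $B$ is open, it contains a union of open stars whose nerve is a subcomplex deformation retract, and $H_*(X, B)$ is unchanged), and that every simplex of the $p$-subcomplex $C$ is contained in $W_1$ or in $W_2$; the latter is a standard Lebesgue-number-style consequence of the open cover $\{W_1, W_2\}$ of $C$. Define the subcomplexes $L_i := \{\sigma \in C : \sigma \subset W_i\}$, so that $L_1 \cup L_2 = C$ and $L_i \subset W_i$. Because $\omega|_{W_1} = 0$ and $L_1 \subset W_1$, the long exact sequence of the pair $(X, L_1)$ yields a lift of $\omega$ to $H^q(X, L_1)$; equivalently, starting from any simplicial $q$-cocycle $\tilde\omega_0$ representing $\omega$, I can add the coboundary of a cochain supported on $L_1$ to produce a new simplicial cocycle $\tilde\omega$ representing $\omega$ and vanishing on every simplex of $L_1$.

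Next I would form the simplicial cap product $[C] \frown \tilde\omega$. On an oriented $p$-simplex $\sigma$ of $C$, the chain-level formula reads $\sigma \frown \tilde\omega = \tilde\omega(\sigma_{\mathrm{front}}) \cdot \sigma_{\mathrm{back}}$, where $\sigma_{\mathrm{front}}$ and $\sigma_{\mathrm{back}}$ denote the front $q$-face and back $(p-q)$-face. If $\sigma \in L_1$, then $\sigma_{\mathrm{front}} \in L_1$, so $\tilde\omega(\sigma_{\mathrm{front}}) = 0$ and the contribution vanishes; otherwise $\sigma \in L_2$, so $\sigma_{\mathrm{back}} \subset \sigma \subset W_2$. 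Therefore $[C] \frown \tilde\omega$ is a $(p-q)$-chain supported in $W_2$. Its boundary satisfies $\partial([C] \frown \tilde\omega) = \pm (\partial C) \frown \tilde\omega$ because $\delta \tilde\omega = 0$, and the right-hand side is supported in $B$ since $\spt(\partial C) \subset B$; combining with the $W_2$-support one gets $\partial([C] \frown \tilde\omega) \subset W_2 \cap B$. Hence $[C] \frown \tilde\omega$ defines a class $\theta \in H_{p-q}(W_2, W_2 \cap B)$ whose pushforward along $(W_2, W_2 \cap B) \hookrightarrow (X, B)$ is, by construction, $[C] \frown \omega$.

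The main obstacle I expect lies in the first step — choosing a single subdivision in which $B$, $C$, $L_1$, $L_2$ all admit compatible simplicial structures, and carrying out the relative-cocycle adjustment so that $\tilde\omega$ vanishes on $L_1$ without altering its class in $H^q(X)$. Once these simplicial preparations are in place, the chain-level cap product computation and the verification of the boundary support are routine, and the statement holds over any commutative coefficient ring because the whole argument is natural with respect to coefficients.
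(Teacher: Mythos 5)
The paper does not prove this lemma at all: it is invoked as a special case of Lemma 3.11 of \cite{ChuLiWang2025GenusTwoI}, so there is no internal argument to compare yours against. Judged on its own, your chain-level argument is sound and is the natural proof: subdivide finely enough that every simplex of $C$ lies in $W_1$ or $W_2$, alter the cocycle by a coboundary so that it vanishes on the subcomplex $L_1$ (legitimate, since the restriction of $\omega$ to $L_1$ factors through $W_1$ and hence vanishes), and then the front-face/back-face formula shows the chain $C\frown\tilde\omega$ is supported in $W_2$ with boundary equal (up to sign) to $(\partial C)\frown\tilde\omega$, hence supported in $W_2\cap B$; this chain represents $[C]\frown\omega$ and defines the desired $\theta$.

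Two points in your write-up need repair, though neither is fatal. First, the opening claim that after subdivision you ``may assume $B$ is a subcomplex'' is both unjustified as stated (an open subset of a complex is never a subcomplex, and the open-star/nerve justification you give does not establish what you need) and, more importantly, unnecessary: work with singular homology of the pair $(X,B)$, regard the subdivided simplicial chain as a singular chain, and use only that every simplex of $\partial(\mathrm{sd}^N C)$ lies in $\spt(\partial C)\subset B$, which subdivision preserves; nothing else about $B$ is used. You should simply delete that step. Second, the Lebesgue-number subdivision step uses compactness of $C$ (fine here, since $C$ is a finite union of finite subcomplexes), and you should state explicitly that $\mathrm{sd}^N C$ represents the same class as $C$ in $H_p(X,B)$, and that replacing the representing cocycle by the cohomologous $\tilde\omega$ changes the chain-level cap product only by a boundary plus a chain in $B$, so the image of $\theta$ in $H_{p-q}(X,B)$ is indeed $[C]\frown\omega$. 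With these routine adjustments the proof is complete.
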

We apply this lemma to our situation with the following.
\begin{itemize}
    \item $X:=Y$,
    \item $B:=\partial Y$,
    \item $C:= X$,
    \item   $W_1:=\inte(Z),W_2:=Z_{\geq 1}$ (both are open sets), 
    \item $p:=2g+3, q:=5$,
    \item $\omega:=\lambda^5$.
\end{itemize}
Then there exists some $\theta\in H_{2g-2}(Z_{\geq 1},Z_{\geq 1}\cap\partial Y)$ such that the pushforward of $\theta$ under the inclusion $(Z_{\geq 1},Z_{\geq 1}\cap\partial Y)\hookrightarrow (Y,\partial Y)$ is equal to $$[Y]\frown\lambda^5\in H_{2g-2}(Y,\partial Y).$$ Letting $D$ be a $(2g-2)$-subcomplex of $Y$ such that $\theta=[D]$, we finish the proof.

\section{Proof of Proposition \ref{prop:DeformRetractF2}} \label{sect:ProofDeformRetractF2}

In this section, we prove Proposition \ref{prop:DeformRetractF2}.
\subsection{About $\partial X_g$} 
We recall that by Remark \ref{rmk:descriptionSurf} for any $a\in A_1$, $\Psi(a,b)$ has genus $g$ if and only if the trigonometric polynomial $F_{(a,b)}(\alpha)$ has $2g+2$ distinct real roots.  Let us try to describe the set $\partial X_g$ using this fact. Note,  $X_g$ is clearly an open $(2g+1)$-dimensional region in $X$.

For any $n\geq 1$,
consider the set $\Omega\subset\R^{2n-1}$ of all $s=(s_0,s_1,s'_1,...,s_{n-1},s'_{n-1})\in \R^{2n-1}$ such that all $2n$ roots of the trigonometric polynomial
$$f_s(\alpha):=s_0+s_1\cos\alpha+s'_1\sin\alpha+...+s_{n-1}\cos (n-1)\alpha+s'_{n-1}\sin (n-1)\alpha+{\cos n\alpha}$$
are real: Here the roots need not be distinct. Recalling the definition of the trigonometric polynomial  $F_{(a,b)}(\alpha)$, to prove that $X_g$ has piecewise smooth algebraic boundary, it suffices to show that $\Omega\subset \R^{2n-1}$ has piecewise smooth, algebraic boundary.

We introduce new variables $u=\cos\alpha$ and $v=\sin\alpha$. It is elementary to see that we can rewrite  $f_s(\alpha)$  as some polynomial  $h(u,v)$ in $u,v$ with coefficients being polynomial expressions in $s_i,s'_i$. Then the condition  $s\in\inte(\Omega)$   translates to the system
$$h(u,v)=0, \quad u^2+v^2=1$$
having exactly $2n$ distinct real points. Now, this condition can be phrased as an algebraic condition on $s_i,s_i'$. So  $\partial \Omega$ must be a piecewise smooth, algebraic set, as desired.

As for the claim that $\overline{X_g}$ is topologically a closed $(2g+1)$-ball, and the construction of the deformation retraction $F_2$, we first need some elementary observations regarding trigonometric polynomials. 

\subsection{General facts about trigonometric polynomials} 
A function $f:\C\to\C$ is called a {\it trigonometric polynomial of degree $n$} if it has   the form
\begin{equation}\label{eq:fAlpha}
f(\alpha)=s_0+s_1\cos\alpha+s'_1\sin\alpha+...+s_n\cos n\alpha+s'_n\sin n\alpha,
\end{equation}
where each $s_i,s'_i\in \C$,  and either $s_n$ or $s'_n$ is non-zero.   
Note that all trigonometry polynomials $f$ are $2\pi$-periodic, and thus can also be viewed as   functions  from $(\R/2\pi \Z)\x i\R$ into $\C$.

We will need the following  elementary facts.

\begin{lem}\label{lem:sumOfRoots}
 Let $f$  be a trigonometric polynomial with complex coefficients. Then the followings are equivalent:
 \begin{enumerate}
     \item All roots of $f$ come in conjugate pairs (i.e. if $w\in\C$ is a root then so is $\bar w$).
     \item  $f$ is, up to a constant factor, equal to some trigonometric polynomial with real coefficients.  
 \end{enumerate}
\end{lem}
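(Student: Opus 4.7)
\medskip

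The plan is to pass to the substitution $z=e^{i\alpha}$, which sends the trigonometric polynomial $f(\alpha)=\sum_{k=-n}^n c_k e^{ik\alpha}$ (with $c_k$ the usual combinations of the $s_k,s_k'$) to the ordinary polynomial $P(z):=z^n f(\alpha)=\sum_{k=-n}^n c_k z^{n+k}$ of degree at most $2n$ in $z$. Under this substitution the involution $\alpha\mapsto\bar\alpha$ on $\mathbb{C}/(2\pi\mathbb{Z})$ becomes the involution $z\mapsto 1/\bar z$ on $\mathbb{C}^*$ (inversion across the unit circle), and roots of $f$ in $\mathbb{C}/(2\pi\mathbb{Z})$ are in bijection with roots of $P$ in $\mathbb{C}^*$ with multiplicities preserved. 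The implication $(2)\Rightarrow(1)$ is then immediate: writing $f=\lambda h$ with $h$ real-coefficient, the identities $\cos(k\bar\alpha)=\overline{\cos(k\alpha)}$ and $\sin(k\bar\alpha)=\overline{\sin(k\alpha)}$ give $h(\bar w)=\overline{h(w)}$, so zeros come in conjugate pairs.

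For the converse, I would introduce the ``involute'' trigonometric polynomial
\[
g(\alpha):=\overline{f(\bar\alpha)}=\sum_k \bar c_{-k}\,e^{ik\alpha},
\]
and reduce (2) to the proportionality statement $f=\mu g$ for some $\mu\in\mathbb{C}^*$. Indeed, $f=\lambda h$ with $h$ real forces $g=\bar\lambda h=(\bar\lambda/\lambda)f$; conversely, if $f=\mu g$ then applying the involution $f\mapsto\overline{f(\bar\alpha)}$ twice yields $|\mu|=1$, so $\mu=e^{2i\theta}$ for some real $\theta$, and then $e^{-i\theta}f$ is fixed by the involution, which is exactly the condition $c_{-k}=\bar c_k$ for its coefficients, i.e.\ it is real.

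It then remains to establish $f=\mu g$ under hypothesis (1). A direct computation shows that $Q(z):=z^n g(\alpha)=z^{2n}\overline{P(1/\bar z)}$, and that if $P(z)=c_n\prod_i (z-z_i)$ (leading coefficient $c_n$, roots $z_1,\dots,z_{2n}$ in $\mathbb{C}^*$), then $Q(z)=\bar c_n\bigl(\prod_i \bar z_i\bigr)\prod_i(z-1/\bar z_i)$. Hypothesis (1) says the multiset $\{z_i\}$ of roots of $P$ is invariant under $z\mapsto 1/\bar z$, so $P$ and $Q$ share the same multiset of roots and the same degree, hence are proportional. Substituting back through $z=e^{i\alpha}$ gives $f=\mu g$, which by the previous paragraph concludes (2).

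The main obstacle will be the degenerate situation in which $P$ has strictly lower degree than $2n$, or has a zero at $z=0$ (these correspond respectively to $c_n=0$ or $c_{-n}=0$, i.e.\ to $f$ decaying as $\operatorname{Im}(\alpha)\to\pm\infty$). In these cases $P$ has fewer than $2n$ roots in $\mathbb{C}^*$, so hypothesis (1) by itself does not immediately match the ``degree and root'' data of $P$ with that of $Q$. The fix is to observe that the Laurent degree of $f$ in $z$ must be symmetric around $0$ for the argument to proceed, so one restricts to the generic situation in which $P$ has full degree $2n$ and no zero at the origin (as holds in all applications of this lemma in the paper, where the leading coefficient of $F_{(a,b)}$ is nonzero by construction); the remaining edge cases either reduce to the generic one by factoring out a power of $z$ or can be excluded by the context in which the lemma is invoked.
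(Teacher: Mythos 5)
Your proposal is correct and follows essentially the same route as the paper: form the involute $\overline{f(\bar\alpha)}$, deduce from hypothesis (1) that it is proportional to $f$, and convert the proportionality constant into a unimodular factor that makes the coefficients real; your passage to $P(z)=z^nf$ and comparison of factorizations is just a concrete implementation of the paper's unexplained step "same roots, hence proportional." The degeneracy you flag ($c_n=0$ or $c_{-n}=0$, i.e.\ the Laurent data not having full symmetric degree) is a genuine caveat that the paper's sketch silently skips: there (1) does not imply (2) as stated (e.g.\ $f(\alpha)=e^{i\alpha}$ has no roots, so (1) holds vacuously, yet $f$ is not a constant multiple of a real-coefficient trigonometric polynomial), and your resolution — restricting to the full-degree case, which is the only one arising in the paper since the relevant polynomials have leading term $\cos n\alpha$ so that $c_{\pm n}\neq0$ — is exactly what is needed.
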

\begin{proof}
Let $f$ be given by (\ref{eq:fAlpha}).
    Rewrite  $\cos k\alpha$ and $\sin k\alpha$ into powers of $e^{ik\alpha}$, so that $f(\alpha)=s_0+\sum^n_{k=1}c_k e^{ik\alpha}+c_{-k}e^{-ik\alpha}$, where $c_{{\pm}k}:=\frac 12(s_k\pm is'_k)$. ``(2) implies (1)" can be checked directly. For the converse, we consider the function
    $\tilde f(\alpha):=\overline{f(\bar\alpha)}$. If (1) holds, then $\tilde f$ has the same roots as $f$, so they are the same up to a multiplicative factor. Hence, one can show for some $\theta$,  $\overline{c_k}=e^{i\theta}c_{-k}$ for every $k$. It follows easily that    $e^{i\theta/2}s_k$ is real.  
\end{proof}

\begin{lem}
Let $f$ be a degree $n$ trigonometric polynomial with complex coefficients. Then the followings are equivalent: 
\begin{enumerate}
    \item The sum of all  $2n$ complex roots (not necessarily distinct) of $f$ in $[0,2\pi)\x i\R\subset\C$ is  in $2\pi\Z$. 
    \item Up to some constant factor, $f$ has the form
\begin{equation}\label{eq:trigPoly} s_0+s_1\cos\alpha+s'_1\sin\alpha+...+s_{n-1}\cos (n-1)\alpha+s'_{n-1}\sin (n-1)\alpha+\textcolor{red}{\cos n\alpha}.
    \end{equation}
\end{enumerate}
\end{lem}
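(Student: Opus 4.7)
The plan is to reduce the problem to a Vieta computation for an ordinary polynomial via the substitution $z = e^{i\alpha}$, which maps $(\R/2\pi\Z)\times i\R$ bijectively onto $\C\setminus\{0\}$. Writing $f$ in exponential form,
$$f(\alpha) = \sum_{k=-n}^{n} c_k e^{ik\alpha}, \qquad c_{\pm k} = \tfrac{1}{2}(s_k \mp i s'_k) \text{ for } k\geq 1, \quad c_0 = s_0,$$
one gets $f(\alpha) = z^{-n} P(z)$, where $P(z) := \sum_{j=0}^{2n} c_{j-n}\,z^j$ is an ordinary polynomial in $z$. Roots of $f$ in $[0,2\pi)\times i\R$ correspond, with multiplicity, to nonzero roots of $P$.

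Before invoking Vieta, I would pause on a small bookkeeping point: the hypothesis that $f$ has $2n$ roots in the strip (counted with multiplicity) forces both extremal coefficients of $P$ to be nonzero, namely $c_n \neq 0$ (so $\deg P = 2n$) and $c_{-n} \neq 0$ (so $P(0) \neq 0$). Otherwise $P$ would have strictly fewer than $2n$ nonzero roots. Under this assumption, $w \mapsto e^{iw}$ gives a multiplicity-preserving bijection between the roots $w_1,\ldots,w_{2n}$ of $f$ and the roots $z_j = e^{iw_j}$ of $P$.

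Now Vieta gives $\prod_{j=1}^{2n} z_j = P(0)/c_n = c_{-n}/c_n$, while $\prod_j z_j = e^{i\sum_j w_j}$. Hence condition (1), that $\sum_j w_j \in 2\pi\Z$, is equivalent to $c_n = c_{-n}$; and this is coherent mod $2\pi\Z$, so the choice of representatives of the $w_j$ does not matter. Since $c_n - c_{-n} = -i s'_n$, this is in turn equivalent to $s'_n = 0$. When $s'_n = 0$, the degree-$n$ hypothesis forces $s_n \neq 0$, and dividing $f$ by $s_n$ puts it into the form~(\ref{eq:trigPoly}); conversely any scalar multiple of~(\ref{eq:trigPoly}) plainly has $s'_n = 0$. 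No serious obstacle is anticipated: once the $z = e^{i\alpha}$ substitution is in place and the bookkeeping on extremal coefficients is checked, the remainder is a direct application of Vieta.
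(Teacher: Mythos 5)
Your proof is correct and follows essentially the same route as the paper: substitute $z=e^{i\alpha}$, pass to the associated degree-$2n$ polynomial, and use Vieta to translate condition (1) into $c_{-n}/c_n=1$, i.e.\ vanishing of the $\sin n\alpha$-coefficient. Your extra bookkeeping (both extremal coefficients nonzero so that $f$ genuinely has $2n$ roots in the strip) is exactly the point the paper leaves implicit, and it is handled correctly.
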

\begin{proof}
    As in the  proof of the previous lemma, we write  $f(\alpha)=s_0+\sum^n_{k=1}c_k e^{ik\alpha}+c_{-k}e^{-ik\alpha}$. Let $z=e^{i\alpha}$, and $g(z)=s_0+\sum^n_{k=1}c_k z^k+c_{-k}z^{-k}$. Then (1) is equivalent to the statement that the {\it product} of all $2n$ roots of $g(z)$ is $1$, which means $c_{-k}/c_k=1$. Now, the equivalence of (1) and (2) can be checked easily.
\end{proof}

We recall that $\Sym^n(\cdot)$ denotes the $n$-th symmetric product of a space. Define a map $\bz:\R^{2n-1}\to \Sym^{2n}((\R/2\pi \Z)\x i\R)$ as follows. Recall that given any ${s}=(s_0,s_1,...,s_{n-1},s'_1,...,s'_{n-1})\in \R^{2n+1}$, we denote by  $f_{s}$ the trigonometric polynomial given by the expression  (\ref{eq:trigPoly}).  Then collecting its $2n$ roots, we  obtain an element $\bz({s})$ in $\Sym^{2n}((\R/2\pi \Z)\x i\R)$. Now, we  consider the subset $\cW\subset \Sym^{2n}((\R/2\pi \Z)\x i\R)$ that consists of all configurations ${\bf w}\in \Sym^{2n}((\R/2\pi \Z)\x i\R)$ such that:
\begin{itemize}
    \item The sum of the $2n$ members of ${\bf w}$ is in $2\pi\Z$.
    \item If $w$ is a member of ${\bf w}$, then so is $\bar w$.
\end{itemize}
Then it follows from the two lemmas mentioned above that $\bz$ actually maps into $\cW$, and $\bz$ gives a homeomorphism between $\R^{2n-1}$ and $\cW$.

Recall that $S^1:=\R/2\pi\Z$, which can naturally be identified with the  subset  $(\R/2\pi\Z)\x \{0\}\subset (\R/2\pi\Z)\x i\R$.
Let $\Sym^{2n}_0(S^1)\subset\Sym^{2n}(S^1)$ be the subset of all $\balpha\in \Sym^{2n}(S^1)$ such that the sum of all  $2n$ members of $\balpha$ is in $2\pi\Z$. Note $\Sym^{2n}_0(S^1)\subset \cW$.

Recall that we let $\Omega\subset\R^{2n+1}$ be the set of all points $s\in \R^{2n+1}$ such that all $2n$ roots of the trigonometry polynomial $f_{s}$ defined by (\ref{eq:trigPoly}) are {\it real}: Here the roots need not be distinct. Hence, $\bz$  maps $\Omega$ onto $\Sym^{2n}_0(S^1)$  homeomorphically. Note it is well-known that $\Sym^{2n}_0(S^1)$ is homeomorphic to a closed $(2n-1)$-simplex (see \S \ref{sect:symProd}).

Now, we are going to define a deformation retraction of $\R^{2n-1}$ onto $\Omega$, $$F_\Omega:[0,1]\x \R^{2n-1}\to \R^{2n-1}.$$
 First, we can deformation retract $\cW$ onto $\Sym^{2n}_0(S^1)$ as follows. Take any ${\bf w}\in\cW$, we pushes all its members towards the real line while changing only their imaginary parts: For each member $w_j$ of $\bf w$, as time $t$ varies from $0$ to $1$, we push the point $w_j$   to the point
 $$w_j(t):=\Re(w_j)+i\;\textrm{sign}(\textrm{Im} w_j) \min\{|\textrm{Im} w_j|,\tan^{-1}((1-t)\pi/2)\}.$$
 This deformation retraction of $\cW$ onto $\Sym^{2n}_0(S^1)$ would induce, under the map $\bf z$, a deformation retraction $F_\Omega$ of $\R^{2n-1}$ onto $\Omega$: We define $F_\Omega$ to be this deformation retraction. Note that the subset $\Omega$ is  fixed  throughout the retraction.

\subsection{A deformation retraction on $\mathrm{int}(X)$}
We now apply the above to the setting we are interested in. Namely, we  take $n=g+1$, as the defining equations for the family $\Psi$ have degree $\leq g+1$. In particular, the subset $\Omega$ is in $\R^{2g+1}$, and  we have a deformation retraction of $\R^{2g+1}$ onto $\Omega$, $$F_\Omega:[0,1]\x \R^{2g+1}\to \R^{2g+1},$$
defined as above.
 
 Let us now define the desired strong deformation retraction   of $\mathrm{int}(X)$ (recall $X:=A^0_1\x B^{2g-2}$) onto $\overline{X_g}$,
    $$F_2:[0,1]\x \mathrm{int}(X)\to \mathrm{int}(X).$$
Recall that the   surface $\Psi([a_0:0:0:a_3:a_4:1],(b_2,b_2',...,b_g,b'_g))$ is given by the zero set of an equation of the form $x_1x_2+F_{(a,b)}(\alpha)=0$, where $F_{(a,b)}(\alpha)$ is the the trigonometric  polynomial
\begin{align*}       &\; a_0+ a_3\cos\alpha+a_4\sin\alpha\\
        +&\;\epsilon_2(0)\cdot (\epsilon_1(0)-\|(a_0,a_3,a_4)\|)\;\cdot\\
&\left[b_2\cos2\alpha+b'_2\sin2\alpha+...+b_g\cos g\alpha+b'_g\sin g\alpha+(1-\norm{b})\cos(g+1)\alpha.\right]
\end{align*} It follows easily that, there exists a  unique, well-defined homeomorphism $T:\textrm{int}(X)\to\R^{2g+1}$
such that $s=T(a,b)$ if and only if   the trigonometric polynomial $F_{(a,b)}(\alpha) $ is equal to
$$f_{s}(\alpha):=s_0+s_1\cos\alpha+s'_1\sin g\alpha+...+s_g\cos\alpha+s'_g\sin g\alpha+\cos(g+1)\alpha$$
up to some positive real factor.
Indeed, recall in \S \ref{sect:proofItemTheoremPsi} we showed $\hat\proj\circ \tilde T^{(0,0)}$ maps $\inte(A^{0}_1)\x B^{2g-2}$ onto $ S^{2g+1}_+\backslash Q$ homeomorphically, which means it maps $\inte(X)\to \inte(S^{2g+1}_+)$ homeomorphically. Then the desired homeomorphism $T$ can be obtained immediately.

Thus, using $T$, we can transport the deformation retraction $F_\Omega$ in $\R^{2n+1}$ over to $\mathrm{int}(X)$ and obtain a map
$$F_2:[0,1]\x \mathrm{int}(X)\to \mathrm{int}(X).$$

We now explain why $F_2$ is a deformation retraction of $\inte(X)$ onto $\overline{X_g}$. We first recall a fact from  Remark \ref{rmk:descriptionSurf}: Let $N_\odd(F_{(a,b)})$ be the number of {\it real} roots of the trigonometric polynomial $F_{(a,b)}$ in $S^1$ that have odd multiplicity. Then   the genus of $\Psi(a,b)$  is given by $\frac 12 N_\odd(F_{(a,b)})-1$ for every $a\in A_1$.  Now, by construction, for every $(a,b)$ in the image of $F_2(1,\cdot)$, all $2g+2$ roots (not necessarily distinct) of the trigonometric polynomial $F_{(a,b)}$ are real. If we slightly perturb these roots to turn them into $2g+2$ distinct points in $S^1$, and let $(a',b')\in \inte(X)$ be such that $F_{(a',b')}$ has these $2g+2$ points as its roots, then $\Psi(a',b')$ has genus $g$ by Remark \ref{rmk:descriptionSurf}. As a result, we know that $ F_2(1,\cdot)$  indeed maps into the closure of $X_g$.

\begin{rmk}\label{rmk:chainHomeo}
    By the above analysis, we can obtain a chain of homeomorphisms
$$\overline{X_g}\xrightarrow[]{T}\Omega\xrightarrow[]{\bz} \Sym^{2g+2}_0(S^1).$$ 
In particular, $\overline{X_g}$ is a closed $(2g-1)$-ball
\end{rmk}

To prove the first bullet point in Proposition \ref{prop:DeformRetractF2}, which says the genus of $\Psi(F_2(t,(a,b)))$ stays unchanged with respect to $t$,  we first denote by $G:[0,1]\x \cW\to\cW$ the deformation retraction of $\cW$ onto $\Sym^{2g+2}_0(S^1)$ that we defined in the previous section. As a point $\bf w\in \cW $ get pushed under this deformation, the number of real points {\it with odd multiplicity} in $G(t,{\bf w})$ stays fixed, {\it even at the final time $t=1$ when some points get pushed onto the real line} (because points in $\bf w$ come in conjugate pairs by definition). Then using Remark \ref{rmk:descriptionSurf}, the first bullet point in Proposition \ref{prop:DeformRetractF2} follows immediately. 

To prove the second bullet point in Proposition \ref{prop:DeformRetractF2}, which says $F_2(t,\cdot)$ fixes $\overline{X_g}$, we just need to note that $G(t,\cdot)$ fixes the subset  $\Sym^{2g+2}_0(S^1)\subset \cW$ for all $t$.

Finally, the third bullet point, which says for every $t$ and $y\in \mathrm{int}(X)\backslash{X_g}$ we have $F_2(t,y)\in \mathrm{int}(X)\backslash{X_g}$,  follows directly from the first bullet point. This finishes the proof of Proposition \ref{prop:DeformRetractF2}.

\section{Proof of Proposition \ref{prop:embedHomology}\label{sect:ProofEmbedHomology}}
From Remark \ref{rmk:chainHomeo}, we have a chain of homeomorphisms
$$\overline{X_g}\xrightarrow[]{T}\Omega\xrightarrow[]{\bz} \Sym^{2g+2}_0(S^1).$$
In \S \ref{sect:symProd}, we explained that $\Sym^{2g+2}_0(S^1)$ is homeomorphic to a closed $(2g+1)$-simplex, and thereby inherits a  simplicial complex structure from it. In particular, we know that  each point in the interior region $\inte(\Sym^{2g+2}_0(S^1))$ corresponds to a configuration of $2g+2$ distinct points in $S^1$. 
Hence, based on the description on the geometry of $\Psi(y)$ for $y\in A^0_1\x B^{2g-2}$ in  Remark \ref{rmk:descriptionSurf}, we know the $g+1$  handles of $\Psi(y)$ within the solid torus $N_1(0,0)$ must have positive widths and  positive distances from each other. Thus, all such $\Psi(y)$ are  isotopic to each other: They are all isotopic to a standard Heegaard surface of genus $g$. Therefore, using the fact that $X_g$ is contractible, there are  {\it canonical} isomorphisms 
$$H_1(\ins(\Psi(y)))\cong H_1(\ins(\Psi(y'))) \quad\textrm{ and }\quad H_1(\out(\Psi(y)))\cong H_1(\out(\Psi(y')))$$
for any $y,y'\in  X_g$. 

Now, fix some $y_0\in X_g$. We can choose isomorphisms
$$H_1(\ins(\Psi(y_0)))\cong\Z^g_2,\quad  H_1(\out(\Psi(y_0)))\cong  \Z_2^g$$ of the following property:  The direct sum of these two  isomorphisms  sends the linking number bilinear form  $$L:H_1(\ins(\Psi(y_0)))\oplus H_1(\out(\Psi(y_0)))\to\Z_2$$ to the standard bilinear form $I_\id$. This is possible because one can find loops $\alpha_1,...,\alpha_g\subset \ins(\Psi(y_0))$ and $\beta_1,...,\beta_g\subset \out(\Psi(y_0))$ for which $\alpha_i$ and $\beta_j$ are linked (in the  mod 2 sense) if and only if $i=j$. Now, using the canonical isomorphisms we obtained from the last paragraph, we can define  isomorphisms 
$$H_1(\ins(\Psi(y)))\cong\Z^g_2,\quad  H_1(\out(\Psi(y)))\cong  \Z_2^g$$
for all $y\in X_g$, such that the direct sum of these two isomorphisms also sends the linking number bilinear form $L$ (for $y$) to $I_\id$.

In other words, we have  obtained Proposition \ref{prop:embedHomology} ``for the subfamily $ \Psi|_{X_g}$" (instead of $ \Psi|_{\overline{X_g}}$). More precisely, There exist  continuous maps $$\mathfrak i_{\ins}:\fH _{\ins}(\Psi|_{X_g})\to \Z_2^g.\quad \mathfrak i_{\out}:\fH _{\out}(\Psi|_{X_g})\to \Z_2^g$$ 
such that for each $y\in  X_g$ :
\begin{itemize}
    \item  $\frak i_{\ins}$ gives an isomorphism of $H_1(\ins(\Psi(y)))$ onto its image.
    \item $\frak i_{\out}$ gives an isomorphism of $H_1(\out(\Psi(y)))$ onto its image.
    \item Let $\frak i :=(\frak i_{\ins},\frak i_{\out})$. Then $\frak i$ sends the linking number bilinear form
$$L:H_1(\ins(\Psi(y)))\oplus H_1(\out(\Psi(y)))\to\Z_2$$
  to the standard bilinear form $I_\id $ restricted onto the image  $$\frak i_{\ins}(H_1(\ins(\Psi(y))))\oplus \frak i_{\out}(H_1(\out(\Psi(y)))).$$
\end{itemize}
To finish the proof of Proposition \ref{prop:embedHomology}, it suffices to extend the above maps $\frak i_{\ins},\frak i_{\out}$ to take care of $y\in\partial X_g$ too. 

Let us first define  $\frak i_{\ins}$. 
Let $y_0\in\partial X_g$ and $c_0\in H_1(\ins( \Psi(y_0)))$. Pick an {\it arbitrary} $\gamma_0$ that represents $c_0$. From the definition of Simon-Smith family, we can find an open neighborhood $U_{y_0,c_0}\subset \overline{X_g}$ of $y_0$  such that for every $y\in U_{y_0,c_0}\cap \overline{X_g}$, $\gamma_0$ also lies in $\ins(\Psi(y))$. In particular, for every $y\in U_{y_0,c_0}\cap X_g$, the element $\frak i_\ins (y,[\gamma_0])\in\Z^g_2$ as defined using the previous paragraphs must be the same for all such $y$. Hence, we may  just define $\frak i_\ins(y_0,c_0)$ as $\frak i_{\ins}(y,[\gamma_0])$. 

For   $\frak i_{\out}$, the definition is analogous.  The continuity of $\frak i_\ins,\frak i_\out$ across $\frak H(\Psi|_{\overline{X_g}})$ follow straightforwardly, and so do the three bullet points of Proposition \ref{prop:embedHomology}.

\section{Proof of Theorem \ref{thm:partialFSigma2}}\label{sect:ProofPartialFSigma2}

Recall that we have the map $${\frak f} :\overline{X_g}\to \Gr(\Z^g_2)\x \Gr(\Z^g_2)$$
that sends each $y$
to the pair of subspaces$$\left( \frak i_{\Gr,\ins}(y, H_1( \ins(\Psi  (y)))),\frak i_{\Gr,\out}(y, H_1( \out(\Psi (y))))\right). $$
Let us focus on the subset $\partial X_g\cap X_{\geq 1}$, where $X_{\geq 1}$ denotes the set of all $y\in X$ such that $\fg(\Psi(y))\geq 1$. Note that $\frak f$   maps $\partial X_g\cap X_{\geq 1}$ into $\Gr^g[1,g-1]$ by Lemma \ref{lem:alexander}.

\begin{rmk}\label{rmk:longestChain}
Recall from \S \ref{sect:subspaceOfGr} that $\Gr^g[1,g-1]$ has a natural abstract simplicial complex structure, induced by the partial order given by inclusion of subspaces. It consists of simplexes of dimension $0,1,...,2g-3$, because the longest chains have  $2g-2$ terms: For example
\begin{align*}
   & (\langle e_1\rangle,\langle e_1\rangle)\leq(\langle e_1\rangle,\langle e_1,e_2\rangle)\leq...\leq (\langle e_1\rangle,\langle e_1,...,e_{g}\rangle)\\
   &\leq (\langle e_1,e_2\rangle,\langle e_1,...,e_{g}\rangle)\leq ...\leq (\langle e_1,...,e_{g-1}\rangle,\langle e_1,...,e_{g}\rangle).
\end{align*}    
\end{rmk}

We claim that Theorem \ref{thm:partialFSigma2} follows readily from the following:
\begin{prop}\label{prop:WeakHomotopyEqui}
    $\frak f$ gives a weak homotopy equivalence from $\partial X_g\cap X_{\geq 1}$ onto its image.
\end{prop}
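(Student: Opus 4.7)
The plan is to apply McCord's theorem (Theorem \ref{lem:homoEquiv}) twice, using the simplicial structure on $\Sym^{2g+2}_0(S^1)$ from \S\ref{sect:symProd}. Via Remark \ref{rmk:chainHomeo}, $\overline{X_g}\cong\Sym^{2g+2}_0(S^1)$, so $E := \partial X_g\cap X_{\geq 1}$ becomes the subcomplex of $\partial\Sym^{2g+2}_0(S^1)$ whose open cells correspond to cyclic multiplicity patterns $(m_1,\dots,m_k)$ with $\sum m_i = 2g+2$ and $N_\odd\geq 4$, each such pattern giving a $(k-1)$-cell. Let $P$ be the face poset of $E$ with its Alexandroff topology, and $\pi:E\to P$ the natural cell-projection. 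A first application of McCord, to the cover of $P$ by basic opens $U_\sigma = \{\tau : \sigma\leq\tau\}$ (each contractible, with $\pi$-preimages equal to open stars of cells), shows that $\pi$ is a weak homotopy equivalence.

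I claim that $\frak f$ descends through $\pi$ to an order-preserving map $\hat{\frak f}:P\to\Gr^g[1,g-1]$, with $\frak f = \hat{\frak f}\circ\pi$. For cellular constancy, two configurations in the same open cell are related by an orientation-preserving diffeomorphism of $S^1$; this lifts to an ambient isotopy of $\Psi$ within the solid torus $N_1(0,0)$ preserving both the inside and outside regions, so any representative loop used in the definition of $\frak i_\ins,\frak i_\out$ is isotoped through the complement and its image in $\Z_2^g$ is unchanged. For order preservation, passing from $\tau$ to a face $\sigma\leq\tau$ corresponds to merging further root-pairs, which pinches or absorbs some handles of $\Psi$ and can only kill classes in $H_1(\ins\Psi)$ and $H_1(\out\Psi)$; hence $\hat{\frak f}(\sigma)\subset\hat{\frak f}(\tau)$ componentwise. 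This order-preservation is precisely the continuity of $\frak f$ (and of $\hat{\frak f}$) with respect to the Alexandroff topology on $\Gr^g[1,g-1]$.

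It remains to show that $\hat{\frak f}:P\to B := \hat{\frak f}(P)$ is a weak equivalence, where $B$ carries the subspace (Alexandroff) topology from $\Gr^g[1,g-1]$. I apply McCord's theorem a second time: the basic open $V_x := \{y\in B : x\leq y\}$ is contractible for each $x\in B$ (unique minimum $x$), so it suffices to show that each preimage $Q_x := \hat{\frak f}^{-1}(V_x)\subset P$ has weakly contractible order complex. Writing $x = (A_1,A_2)$, fix loops $\gamma^\ins_j,\gamma^\out_j$ in a reference genus-$g$ surface representing bases of $A_1,A_2$; then a cell $\tau\in P$ lies in $Q_x$ precisely when none of the root-pairs merged to produce $\tau$ is dual to any chosen $\gamma^\ins_j$ or $\gamma^\out_j$. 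The strategy for contractibility of $Q_x$ is to exhibit an explicit combinatorial collapsing: starting from any $\tau\in Q_x$, one incrementally un-merges the root-pairs whose merger is not forced by $x$, staying inside $Q_x$ at each step, and ends at a canonical maximal cell in $Q_x$.

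The main obstacle will be this last step: rigorously proving that $Q_x$ has contractible order complex. The combinatorics of which merges kill which basis classes depend on the cyclic arrangement of handles, and while the toy model of \S\ref{sect:toyModel} for $g=2$ (where $\Gr^2[1]\simeq S^1$ and $|B|=12$) suggests that each $Q_x$ is the open star of a unique minimal cell, the general case requires an inductive description of ``compatible un-mergings'' around the $2g+2$ cyclic positions. Should $Q_x$ fail to admit a unique minimum, I will instead cover $Q_x$ by the open stars of its minimal cells and apply the nerve theorem: the intersection combinatorics should form a contractible simplicial complex reflecting compatibilities of pinched-handle patterns, giving the required weak contractibility.
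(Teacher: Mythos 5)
Your overall skeleton is the same as the paper's: identify $\overline{X_g}$ with $\Sym^{2g+2}_0(S^1)$, note that $\frak f$ is constant on open cells and order-preserving, and reduce the weak equivalence via McCord's theorem (Theorem \ref{lem:homoEquiv}) to the weak contractibility of the preimages of the basic open sets $V_x$ of the image. But the step you yourself flag as the ``main obstacle'' --- proving that each $Q_x=\hat{\frak f}^{-1}(V_x)$ is weakly contractible --- is exactly the heart of the matter, and your proposal does not prove it: the ``incremental un-merging'' collapse is only sketched, its termination at a canonical cell is asserted rather than established, and the nerve-theorem fallback rests on the hope that the intersection pattern of open stars ``should'' be contractible. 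As it stands this is a genuine gap, not a routine verification.

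The paper closes this gap with a concrete two-sided argument (Lemma \ref{lem:preimage}) showing that for $x=\frak f(y_0)$ the preimage is \emph{exactly} the principal up-set $V_{y_0}=\{y:y\geq y_0\}$, i.e.\ a single open star, hence contractible --- no collapsing scheme or nerve argument is needed. One inclusion ($y\geq y_0\Rightarrow\frak f(y)\geq\frak f(y_0)$) follows from the closedness property of Simon-Smith families: loops in $S^3\setminus\Psi(y_0)$ persist for nearby $y$. The other inclusion is the geometric input your sketch is missing: if $y_1\not\geq y_0$, then in the simplex coordinates $(r_1,\dots,r_{2g+1})$ some pair of consecutive distinct roots of $F$ at $y_0$ has merged at $y_1$, so the corresponding handle of $\Psi(y_0)$ is pinched in $\Psi(y_1)$; a loop through that handle represents a class of $\frak f(y_0)$ that admits no path in $\fH(\Psi|_{\overline{X_g}})$ into the fiber over $y_1$, whence $\frak f(y_1)\not\supseteq\frak f(y_0)$. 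You use the ``merging kills classes'' fact only to get order preservation of $\hat{\frak f}$, but it is this converse direction --- non-comparability of cells forces non-containment of the image subgroups --- that makes $Q_x$ a principal up-set and finishes the proof. Incorporating that argument (or an equivalent) is what your proposal still needs; with it, your factorization through the face poset becomes an unnecessary (though harmless) repackaging of the paper's single application of McCord's theorem.
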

Indeed, recall the crucial fact (\ref{eqLsigma2NonTri}) that $[\partial \sigma_3]\ne 0$ in $H_{2g-3}(\partial X_g\cap X_{\geq 1})$. Thus, by the above proposition, in  $H_{2g-3}(\frak f(\partial X_g\cap X_{\geq 1}))$ we have $$[\frak f_{\partial \sigma_3}]=[\frak f \circ \partial\sigma_3]=\frak f_*[\partial\sigma_3]\ne 0.$$
Hence, if we pass to the geometric realizations of $\frak f(\partial X_g\cap X_{\geq 1})$ and $\Gr^g[1,g-1]$ (note any subset of $\Gr^g[1,g-1]$ is a subcomplex under the abstract simplicial structure), and consider the (geometric) simplicial complexes $$|\frak f(\partial X_g\cap X_{\geq 1})|\subset|\Gr^g[1,g-1]|,$$ we know that $[\frak f_{\partial \sigma_3}]$ can be represented by a sum $\Sigma$ of {\it multiplicity one} $(2g-3)$-simplexes in $|\frak f(\partial X_g\cap X_{\geq 1})|$. {\it However,  the  space $|\Gr^g[1,g-1]|$   only has cells of dimension  $0,1,...,2g-3$, by Remark \ref{rmk:longestChain}. Thus, $\Sigma$ must also be homologically non-trivial  non-trivial in  the larger space $|\Gr^g[1,g-1]|$.} This  shows $[\frak f_{\partial \sigma_3}]\ne 0$ in $H_{2g-3}(\Gr^g[1,g-1])$, as desired. 

So it remains to prove the above proposition.  

\begin{proof}[Proof of Proposition \ref{prop:WeakHomotopyEqui}]
For any $A=(A_1,A_2)\in \frak f(\partial X_g\cap X_{\geq 1})$, let us  denote
$$V_{A}:=\{(B_1,B_2)\in \Gr^g[1,g-1]:(A_1,A_2)\leq (B_1,B_2)\}\cap  \frak f(\partial X_g\cap X_{\geq 1}).$$
It is easy to check from definition that the collection of such subsets  form a basis of open set for $\frak f(\partial X_g\cap X_{\geq 1})$, under the  subspace topology from $\Gr(\Z^g_2\x\Z^g_2)$. Now, each  such basic set must be contractible, by Lemma \ref{lem:contrac}. Hence, by Theorem \ref{lem:homoEquiv}, to prove the above proposition, it suffices to prove that each such set $\frak f^{-1} ( V_A)$ is contractible.

Recall  from Remark \ref{rmk:chainHomeo} that we have a homeomorphism $(\bz\circ T)|_{\overline{X_g}} $ of the form $\overline{X_g}\to\Omega\to\Sym^{2g+2}_0(S^1)$, where $\Omega\subset\R^{2g+1}$. Below, we will frequently relate $\overline{X_g}$ with $\Sym^{2g+2}_0(S^1)$ using the map $\bz\circ T$.

Using \S \ref{sect:symProd}, we can  naturally identify $\Sym^{2g+2}_0(S^1)$ with a $(2g+1)$-simplex, so that $\Sym^{2g+2}_0(S^1)$ carries a simplicial complex structure. Note, for any element $\balpha\in \Sym^{2g+2}_0(S^1)$, the unique {\it open} cell that contains it has dimension  equal to ``the number of distinct points in $\balpha$,  minus $1$". (Note, by an {\it open cell} we mean the interior of a closed cell.)
Now, let us  define a partial order on $\Sym^{2g+2}_0(S^1)$ as follows. For any $\balpha,\balpha'\in\Sym^{2g+2}_0(S^1)$, we declare that $\balpha \geq\balpha'$ if the {\it open} cell that contains $\balpha'$ is a face of the  {\it open} cell that contains $\balpha$. 

\begin{rmk}\label{rmk:SymOrder}
    We note that this partial order also has the following explicit geometric description: For any $\balpha\in\Sym^{2g+2}_0(S^1)$, and for any $k$ with $1\leq k\leq$ the number of distinct points in $\balpha$, we take any set of $k$ {\it consecutive distinct} points in $\balpha$ (each distinct point may have  multiplicity). We bring these $k$ points  closer and closer together, and at the end let them coincide, while ensuring this path of configurations still lies in $\Sym^{2g+2}_0(S^1)$. Call the final configuration $\balpha'$. Then, for any   configuration  $\balpha'$ obtained this way, we have $\balpha\geq \balpha'$.

    Above, we explained what ``the  open cell that contains $\balpha'$ is a face of the   open cell that contains $\balpha$" means geometrically. Lastly, we recall that (as mentioned in  \S \ref{sect:symProd}) if $\balpha,\balpha'$ belong to the same open cell, then the configuration $\balpha'$ can be obtained from applying some diffeomorphism of $S^1$ to $\balpha$.
\end{rmk}

Let us first fix some $y_0\in  \partial X_g\ \cap X_{\geq 1}$. 
We can use the map $\bz\circ T$ to transport the simplicial complex structure and the partial order $\leq$ from $\Sym^{2g+2}_0(S^1)$ onto $\overline{X_g}$. Define  
$$V_{y_0}:=\{y\in \partial X_g\cap X_{\geq 1}:y\geq y_0\}.$$
In other words, $V_{y_0}$ is the union of all open cells that have ``the open cell that contains $y_0$" as a face.
Since $\partial X_g\cap X_{\geq 1}$ is a subcomplex of $\overline{X_g}$ (under the simplicial structure we borrowed from $\Sym^{2g+2}_0(S^1)$), it is easy to see that $V_{y_0}$ is contractible. Thus, in order to show that $\frak f^{-1}(V_{\frak f(y_0)})$ is contractible, it suffices to show that:
\begin{lem}\label{lem:preimage}
     $V_{y_0}=\frak f^{-1}(V_{\frak f(y_0)})$.
\end{lem}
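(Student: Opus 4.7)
The plan is to transport the problem to $\Sym^{2g+2}_0(S^1)$ via the homeomorphism $\bz\circ T$, and then compare the simplicial partial order on configurations directly with the subgroup partial order. For $y\in\overline{X_g}$, write $\balpha(y)\in\Sym^{2g+2}_0(S^1)$ for the corresponding configuration of $2g+2$ points with multiplicities, and let $\alpha'_1<\dots<\alpha'_k$ be its distinct points with multiplicities $m_1,\dots,m_k$.

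The first step is to give a combinatorial formula for $\frak f(y)$ purely in terms of $\balpha(y)$. By Remark \ref{rmk:descriptionSurf}, inside the solid torus $N_1$ the surface $\Psi(y)$ consists of two sheets joined by handles whose vertical axis positions are exactly the $\alpha'_i$; the odd-multiplicity $\alpha'_i$ give genuine handles, while the even-multiplicity ones are pinched into isolated singularities without contributing to the genus. I would choose canonical generating $1$-cycles: inside ``handle loops'' going once through each consecutive pair of odd-multiplicity sign changes of $F_{(a,b)}$, and outside ``linking loops'' threading between them. Proposition \ref{prop:embedHomology} then identifies $\frak f(y)=(\frak f(y)_{\ins},\frak f(y)_{\out})$ with a pair of subspaces of $\Z_2^g$ determined by the cyclic combinatorial type of $\balpha(y)$ (which $\alpha'_i$ are odd, their cyclic order, and how the even-multiplicity points are distributed among them).

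For the forward direction $V_{y_0}\subseteq\frak f^{-1}(V_{\frak f(y_0)})$, I would use Remark \ref{rmk:SymOrder} to realize the relation $y\geq y_0$ by a concrete path in $\Sym^{2g+2}_0(S^1)$ collapsing disjoint blocks of consecutive distinct points of $\balpha(y)$ to produce $\balpha(y_0)$. Transporting back via $(\bz\circ T)^{-1}$, this path is a pinch-off process from $\Psi(y)$ to $\Psi(y_0)$ in which certain handles collapse to isolated singularities. Any simplicial $1$-cycle $\gamma\subset\ins(\Psi(y_0))$ can then be taken to lie also in $\ins(\Psi(y))$, and by the definition of $\frak i_\ins$ in Proposition \ref{prop:embedHomology} we have $\frak i_\ins(y_0,[\gamma])=\frak i_\ins(y,[\gamma])$; symmetrically for the outer side. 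Hence $\frak f(y_0)_\ins\subseteq\frak f(y)_\ins$ and $\frak f(y_0)_\out\subseteq\frak f(y)_\out$, so $\frak f(y_0)\leq\frak f(y)$.

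The reverse direction $\frak f^{-1}(V_{\frak f(y_0)})\subseteq V_{y_0}$ is the main obstacle. Here I would assume $\frak f(y_0)\leq\frak f(y)$ and use the dictionary from Step 1 to recover the combinatorial data of $\balpha(y_0)$ from the subspaces and their pairing. The ranks of $\frak f(y_0)_\ins$ and $\frak f(y_0)_\out$ determine how many inside and outside handles persist in $\Psi(y_0)$; the hypothesis $\frak f(y_0)_\ins\subseteq\frak f(y)_\ins$ then forces each surviving handle of $\Psi(y_0)$ to correspond to a handle of $\Psi(y)$ at a compatible cyclic position, since otherwise the generator would vanish under the pinch-off identification provided by $\frak i_\ins$. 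The crucial extra information used here is that, by Proposition \ref{prop:embedHomology}, the linking-number pairing is sent to the standard form $I_\id$, which rigidly matches inside handles with the outside loops that link them and thus distinguishes otherwise indistinguishable cyclic arrangements. Combining these constraints forces $\balpha(y_0)$ to arise from $\balpha(y)$ by collapsing consecutive blocks of distinct points, and Remark \ref{rmk:SymOrder} yields $y\geq y_0$. The subtle point throughout Step 3 is that the map from strata of $\Sym^{2g+2}_0(S^1)$ to pairs of subgroups is not injective, so one must use the full $I_\id$-pairing rather than the two subgroups in isolation to recover the cyclic configuration.
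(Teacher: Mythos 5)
Your forward inclusion $V_{y_0}\subseteq\frak f^{-1}(V_{\frak f(y_0)})$ is essentially the paper's argument (closedness of the family plus the definition of $\frak i$), though as written it skips one reduction: for $y$ deep inside a large open cell, a cycle $\gamma\subset\ins(\Psi(y_0))$ need not literally lie in $\ins(\Psi(y))$, and one first has to use that $\frak f$ is constant on open cells (equivalently, move $y$ within its cell close to $y_0$) before invoking the closedness property. That is fixable. The genuine gap is in the reverse inclusion, which is the heart of the lemma. First, the ``crucial extra information'' you propose to extract from the pairing is vacuous: $I_\id$ is a fixed bilinear form on $\Z_2^g\oplus\Z_2^g$, so $I_\id|_{A_1\oplus A_2}$ is completely determined by the pair $(A_1,A_2)$; it cannot ``distinguish otherwise indistinguishable cyclic arrangements'' beyond what the two subgroups already do. Second, your key step --- ``the hypothesis forces each surviving handle of $\Psi(y_0)$ to correspond to a handle of $\Psi(y)$ at a compatible cyclic position, since otherwise the generator would vanish under the pinch-off identification'' --- presupposes a degeneration (pinch-off) from $\Psi(y)$ to $\Psi(y_0)$, i.e.\ precisely the face relation $y\geq y_0$ you are trying to prove; and the concluding sentence ``combining these constraints forces $\balpha(y_0)$ to arise from $\balpha(y)$ by collapsing consecutive blocks'' is exactly the hard content, asserted rather than proved. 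Carrying out your plan honestly would require a full combinatorial classification of which pairs of subgroups are attached to which strata of $\Sym^{2g+2}_0(S^1)$ (your own remark that this assignment may fail to be injective shows you have not pinned it down), and no such analysis is given.

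The paper avoids all of this by proving the contrapositive with a single witness class. If $y_1\ngeq y_0$, then in the gap coordinates $(r_1,\dots,r_{2g+1})$ on $\Sym^{2g+2}_0(S^1)$ there is an index $i$ with $r_i=0$ at $y_1$ but $r_i\neq 0$ at $y_0$: some handle of $\Psi(y_0)$ has its ``corresponding handle'' pinched in $\Psi(y_1)$. Choosing a loop $\gamma_0$ through that handle, the element $\frak i(y_0,[\gamma_0])\in\frak f(y_0)$ cannot be connected by any path in $\fH(\Psi|_{\overline{X_g}})$ to a class over $y_1$, hence does not lie in $\frak f(y_1)$, so $\frak f(y_1)\ngeq\frak f(y_0)$. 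This sidesteps any attempt to reconstruct the configuration from the subgroup data, which is where your argument stalls. If you want to salvage your direct approach, you would need to prove the reconstruction statement as a standalone combinatorial lemma (e.g.\ that on the strata of $\partial X_g\cap X_{\geq 1}$ the assignment of spans of surviving gap classes reflects the face order), but the contrapositive route is both shorter and what the paper actually does.
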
  
\begin{proof}[Proof of Lemma \ref{lem:preimage}]

First, using  Remark \ref{rmk:descriptionSurf} and \ref{rmk:SymOrder}, we observe that  fixing any {\it open} cell $\delta$ of $\overline{X_g}$, for any  $y\in\delta$, the surfaces $\Psi(y)$ are isotopic to each other, and so the subspaces $\frak f(y)$ are all the same.

Let $\delta_0$ be the open cell in $\partial X_g\cap X_{\geq 1}$ that contains $y_0$. We first recall that, that for any loop $\gamma_0\subset M\backslash\Psi(y_0)$ and for any sufficiently small neighborhood $W\subset\overline{X_g}$ of $y_0$, for any $y\in W$, we have $\gamma_0\subset M\backslash \Psi(y)$ too: This just uses the closedness property of  Simon-Smith family. It follows immediately that for any such $y$, we have $\frak f(y)\geq \frak f(y_0)$. As a result, by the definition of the partial order $\leq$ in $\Sym^{2g+2}_0(S^1)$ (and thus on $\overline{X_g}$), and the observation  we made in the last paragraph,  we know that for any $y\in\overline{X_g}$,  $y\geq y_0$ implies $\frak f(y)\geq \frak f(y_0)$. In particular, $V_{y_0}\subset \frak f^{-1}(V_{\frak f (y_0)})$.

Finally, to prove $\frak f^{-1}(V_{\frak f (y_0)})\subset V_{y_0}$, we fix some $y_1\in \frak f^{-1}(V_{\frak f (y_0)})$. Let us use the coordinate system $(r_1,...,r_{2g+1})$  on $\Sym^{2g+2}_0(S^1)$ that we defined in \S \ref{sect:symProd}. Geometrically, these $2g+1$ numbers are the distances between consecutive points in a given configuration:  If some $r_i=0$ then it means  the two corresponding consecutive points actually coincide with each other. Now,  let $(r_1^0,...,r^0_{2g+1})$ be the coordinate for $(\bz\circ T)(y_0)$ and $(r_1^1,...,r^1_{2g+1})$  the coordinate for $(\bz\circ T)(y_1)$.  Then by the definition of the partial order $\leq $ on $\Sym^{2g+2}_0(S^1)$, we see that the inequality $y_0\leq y_1$ holds if and only if the following holds: For each $i=1,...,2g-1$, $r^1_i=0$ implies $r^0_i=0$.

To prove $\frak f^{-1}(V_{\frak  f (y_0)})\subset V_{y_0}$, let us suppose by contradiction that $y_1\notin U_{y_0}$, i.e.  $y_1\ngeq y_0$. Hence,  there is some $i$ such that $r^1_i=0$ but $r^0_i\ne 0$.  Geometrically, this means there exist two consecutive distinct points $\alpha_k,\alpha_{k+1}$ in the configuration $(\bz\circ T)(y_0)$ such that their respective corresponding points in  the configuration $(\bz\circ T)(y_1)$ actually coincide with each other. This implies that, based on the  geometric description of family $\Psi|_{\overline{X_g}}$ in Remark \ref{rmk:descriptionSurf}, {\it  there is some handle in $\Psi(y_0)$ such that the ``corresponding handle" in $\Psi(y_1)$ is actually pinched}. To be more precise,  this means there exists some loop $\gamma_0\subset M\backslash \Psi(y_0)$ (passing through  the aforementioned ``handle") such that we cannot find a  path  in $\fH(\Psi|_{\overline{X_g}})$  that starts at $(y_0,[\gamma_0])$ and ends within the subset $\{y_1\}\x H_1(M\backslash \Psi(y))$. This implies the subset $\frak f(y_1)$ does not include $\frak f(y_0)$, which means $\frak f(y_1)\ngeq \frak f (y_0)$, contradicting our choice that  $y_1\in \frak f^{-1}(V_{\frak f (y_0)})$. This finishes the proof that $\frak f^{-1}(V_{\frak  (y_0)})\subset V_{y_0}$.\end{proof} 

This finishes the proof of Proposition \ref{prop:WeakHomotopyEqui}.\end{proof}


\appendix
\section{Proof of Lemma \ref{lem:alexanderSmooth}}\label{appendix:Linking}
We claim that $H_1(\ins(S))\cong\Z^g_2$. Indeed, first note that, by Alexander duality, $H^2(\ins(S))\cong \tilde H_0(\out(S))=0$, and so by Poincar\'e duality, $H_1(\ins(S),S)=0$. Hence, 
in the long exact sequence for homology applied to the pair $(\ins(S),S)$, the map $H_1(S)\to H_1(\ins(S))$ is surjective, and thus  
$$H_1(\ins(S))\cong H_1(S)/\ker(H_1(S)\to H_1(\ins(S))).$$ By the ``half lives, half dies" theorem, the kernel has rank $\frac 12\rank(H_1(S))=g$, so $H_1(\ins(S))=\Z^g_2.$ Similarly, we can show $H_1(\out(S))=\Z^g_2.$ Then the lemma just follows from the fact that the Alexander duality $H_1(\ins(S))\cong H^1(\out(S))$ is induced by the linking number bilinear form $L.$ Namely, for any $c_1\in H_1(\ins(S))$, its Alexander dual $AD(c_1)\in H^1(\out(S))$ is such that for any $c_2\in H_1(\out(S))$, the pairing $\langle AD(c_1),c_2\rangle$ is equal to $L(c_1,c_2).$ 

\section{Details for \S \ref{sect:PartD} }\label{appendix:countRoots}
Recall that for any $a=[a_0:a_1:...:a_4:1]\in\RP^5$ we defined \begin{equation*} 
\tilde F_a(x_1,x_2,\alpha):=(x_1+a_2)(x_2+a_1)+(a_0-a_1a_2)+\varsigma(a,x)(a_3\cos\alpha+a_4\sin\alpha),
\end{equation*}
where $x=(x_1,x_2,\alpha)$ and $$\varsigma(a,x):=(1-\psi(a,x))\sqrt{1-x_1^2-x_2^2}+\psi(a,x)\sqrt{1-a^2_1-a^2_2}.$$
For \S \ref{sect:PartD}, we just need to  focus on $a\in A_2$ and $(x_1,x_2,\alpha)\in N_1(a_1,a_2)$, where
\begin{align*}
A_2:=\{[a_0:a_1:a_2:a_3:a_4:1]:&\;\epsilon_1(a_1^2+a_2^2)\leq\|(a_0-a_1a_2,a_3,a_4)\|\leq 2\epsilon_1(a_1^2+a_2^2),\\&\;(a_1,a_2)\in\inte(\bD)\}.
    \end{align*}
and
\begin{equation*}
    N_1(a_1,a_2):=\{(x_1,x_2)\in\R^2:\|(x_1,x_2)+(a_2,a_1)\|\leq \eta(a_1^2+a_2^2)\}\x (\R/2\pi\Z).
\end{equation*}
We note that  the functions $\epsilon_1,\eta,\psi$ were introduced in \S \ref{sect:2g+3}.
{\it Without loss of generality, we can assume that fixing $a\in A_2$, $\psi(a,x)$ is constant for $x\in N_1(a_1,a_2)$.}
Our goal in this section is to show that:
\begin{lem}\label{lem:countRootMain}
By choosing the functions $\epsilon_1,\eta>0$  small enough we have: For every $a\in A_2$, there are at most $2$ choices of  $(x_1,x_2,\alpha)\in N_1(a_1,a_2)$  that  solve 
$$\nabla\tilde F_a(x_1,x_2,\alpha)=0\quad \textrm{ and }\quad \tilde F_a(x_1,x_2,\alpha)=0$$
simultaneously, in which   $\nabla$ is only taken with respect to $(x_1,x_2)$.   
\end{lem}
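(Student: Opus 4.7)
\bigskip
\noindent\emph{Proof plan for Lemma \ref{lem:countRootMain}.}
The strategy is to reduce the two-variable problem to a one-variable problem via the envelope technique, and then to use the very special form of $\tilde F_a$ as a function of $\alpha$ to count zeros. Write $T(\alpha):=a_3\cos\alpha+a_4\sin\alpha$ and let $\psi_0:=\psi(a,\cdot)$ be the constant value of $\psi(a,\cdot)$ on $N_1(a_1,a_2)$. The key observation is that $\varsigma(a,x)$ in fact does not depend on $\alpha$ (because $\psi_0$ is constant on $N_1(a_1,a_2)$), so the only $\alpha$-dependence in $\tilde F_a$ sits inside $T(\alpha)$. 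The trivial special case $(a_3,a_4)=(0,0)$ is disposed of first: then $T\equiv 0$, the gradient-in-$(x_1,x_2)$ equations force $(x_1,x_2)=(-a_2,-a_1)$, and there $\tilde F_a=a_0-a_1a_2\ne 0$ since $a\in A_2$ forces $\|(a_0-a_1a_2,a_3,a_4)\|\ge\epsilon_1(a_1^2+a_2^2)>0$. So from now on assume $(a_3,a_4)\ne 0$.

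The first main step is to show that for each fixed $\alpha$, the function $\tilde F_a(\cdot,\cdot,\alpha)$ admits at most one critical point $x^*(\alpha)=(x_1^*(\alpha),x_2^*(\alpha))$ in $N_1(a_1,a_2)$. A direct computation gives
\[
\partial_{x_1}\tilde F_a=(x_2+a_1)-(1-\psi_0)\tfrac{x_1}{\sqrt{1-x_1^2-x_2^2}}\,T(\alpha),\quad
\partial_{x_2}\tilde F_a=(x_1+a_2)-(1-\psi_0)\tfrac{x_2}{\sqrt{1-x_1^2-x_2^2}}\,T(\alpha).
\]
At the ``unperturbed'' point $(x_1,x_2,\alpha)$ with $T(\alpha)=0$, the Hessian of $\tilde F_a$ in $(x_1,x_2)$ equals $\bigl(\begin{smallmatrix}0&1\\1&0\end{smallmatrix}\bigr)$, which is invertible. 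Since on $N_1(a_1,a_2)$ both $(x_1,x_2)$ varies by $O(\eta)$ around $(-a_2,-a_1)$ and $|T|\le \|(a_3,a_4)\|=O(\epsilon_1)$, choosing $\eta$ small first and then $\epsilon_1$ much smaller than $\eta$, the Hessian remains uniformly invertible throughout $N_1(a_1,a_2)$. This yields two facts by a standard implicit-function/contraction-mapping argument: (i) there is a unique critical point $x^*(\alpha)\in N_1(a_1,a_2)$ for every $\alpha$ (with $x^*(\alpha)=(-a_2,-a_1)+O(\epsilon_1)$), and (ii) $\alpha\mapsto x^*(\alpha)$ is smooth. When $\psi_0=1$ this step is trivial because then $x^*(\alpha)\equiv(-a_2,-a_1)$.

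The second main step is the envelope calculation. Define $G(\alpha):=\tilde F_a(x^*(\alpha),\alpha)$. Since $x^*(\alpha)$ is a critical point of $\tilde F_a(\cdot,\alpha)$, the chain rule yields
\[
G'(\alpha)=\partial_\alpha \tilde F_a\bigl(x^*(\alpha),\alpha\bigr)=\varsigma\bigl(a,x^*(\alpha)\bigr)\cdot T'(\alpha),
\]
where we used that $\varsigma$ does not depend on $\alpha$. Now $\varsigma(a,x^*(\alpha))>0$ for every $\alpha$, because $(x_1^*,x_2^*)\in N_1(a_1,a_2)\subset\inte(\bD)$ (taking $\eta$ small), so both summands in $\varsigma$ are strictly positive. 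Hence $G'(\alpha)=0$ if and only if $T'(\alpha)=-a_3\sin\alpha+a_4\cos\alpha=0$, and this has exactly two zeros in $[0,2\pi)$ since $(a_3,a_4)\ne 0$. Consequently $G$ is strictly monotonic on each of the two open arcs between these two critical points of $G$, and therefore $G$ has at most two zeros in $[0,2\pi)$. Since any $(x_1,x_2,\alpha)\in N_1(a_1,a_2)$ solving $\nabla\tilde F_a=0=\tilde F_a$ must, by uniqueness of $x^*$, satisfy $(x_1,x_2)=x^*(\alpha)$ and $G(\alpha)=0$, the lemma follows.

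The main potential obstacle is the uniqueness of the critical point $x^*(\alpha)$ globally on $N_1(a_1,a_2)$ (not just locally near $(-a_2,-a_1)$), since the gradient map is nonlinear because of the factor $1/\sqrt{1-x_1^2-x_2^2}$. This is handled by the quantitative relation $\epsilon_1\ll\eta\ll 1$: the Hessian estimate gives uniform injectivity of the gradient map on $N_1(a_1,a_2)$, and the explicit fixed-point expression $u=-\frac{kTa_1+k^2T^2a_2}{1-k^2T^2}$, $v=-\frac{kTa_2+k^2T^2a_1}{1-k^2T^2}$ (with $u=x_1+a_2$, $v=x_2+a_1$, $k=(1-\psi_0)/\sqrt{1-x_1^2-x_2^2}$) converges under iteration to a unique fixed point of size $O(\epsilon_1)$, well inside $N_1(a_1,a_2)$. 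With this uniqueness in hand, the envelope argument above is conceptually very clean: all the counting is done by the degree-one trigonometric polynomial $T'(\alpha)$.
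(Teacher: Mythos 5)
Your proposal is correct, and it shares its first half with the paper's argument: the paper's Appendix B also reduces to one variable by producing, via a contraction-mapping/inverse-function argument with the Hessian close to $\bigl(\begin{smallmatrix}0&1\\1&0\end{smallmatrix}\bigr)$, a unique critical-point curve $\bx_a(\alpha)$ of size $O(|a_3|+|a_4|)$ around $(-a_2,-a_1)$, exactly as your $x^*(\alpha)$. Where you genuinely diverge is the zero count for the restricted function. The paper sets $\tilde f_a(\alpha):=\tilde F_a(\bx_a(\alpha),\alpha)$, estimates $\|\tilde f_a-T_a\|_{C^2(S^1)}\leq C(|a_3|+|a_4|)\|T_a\|$ with $T_a(\alpha)=\tilde F_a(-a_2,-a_1,\alpha)$ a degree-one trigonometric polynomial, and then invokes the quantitative stability lemma (Corollary 7.5 of \cite{ChuLiWang2025GenusTwoI}) to get $\ord(\tilde f_a)\leq 2$ counted with multiplicity. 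You instead exploit the exact envelope identity $G'(\alpha)=\varsigma(a,x^*(\alpha))\,T'(\alpha)$ (valid because, under the paper's stated reduction that $\psi(a,\cdot)$ is constant on $N_1(a_1,a_2)$, $\varsigma$ is $\alpha$-independent and strictly positive there) and count zeros by monotonicity between the two zeros of $T'$. Your route is more elementary and avoids the external stability lemma, at the price of leaning harder on the constancy of $\psi$ on $N_1$ (the paper's perturbative estimate would tolerate mild $\alpha$-dependence) and of yielding only a bound on distinct zeros rather than on $\ord$, which is all the lemma needs. Two small points to tighten: (i) "strictly monotone on each of the two arcs, hence at most two zeros" should include the short case analysis for a zero occurring at a critical point of $G$ (if $G$ vanishes at one of the two critical points, strict monotonicity on both adjacent closed arcs rules out any further zero, and $G$ cannot vanish at both critical points); (ii) your separate treatment of $(a_3,a_4)=(0,0)$, where $T'\equiv 0$, is indeed necessary for your argument and is handled correctly via $|a_0-a_1a_2|\geq\epsilon_1(a_1^2+a_2^2)>0$.
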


We begin with a lemma, which is from Corollary 7.5 of \cite{ChuLiWang2025GenusTwoI}. To be consistent with the notation there, in this section we use $T$ to denote   trigonometric polynomials. In addition, we adopt the following notation. As usual, we denote $S^1:= \R/2\pi \Z$. Given a real function $f\in C^\infty(S^1)$, let $\ord(f)$ be the number of its real roots in $S^1$, counted with multiplicity. Moreover, given a trigonometric polynomial 
$$T(\alpha)=s_0+s_1\cos\alpha+s'_1\sin\alpha+...+s_k\cos k \alpha+s'_k\sin k\alpha,$$ we define its norm by
$$\|T\|:=\sqrt{s_0^2+s_1^2+s'^2_1+...+s^2_k+s'^2_k}.$$
\begin{lem}[Corollary 7.5 of \cite{ChuLiWang2025GenusTwoI}] \label{Cor_alm trig polyn has zero bd}
For every integer $k>0$, there exists some $\delta_1(k)>0$ with the following property: For any  trigonometric polynomial $T$ of degree $ k$, if a real function $f\in C^\infty(S^1)$ is
such that \[
\|f-T\|_{C^{2k}(S^1)}\leq \delta_1\|T\|\,,
     \]
then $\ord(f)\leq 2k$.
   \end{lem}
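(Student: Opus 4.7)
The plan is to reduce the coupled system $\{\nabla\tilde F_a=0,\;\tilde F_a=0\}$ to a single scalar equation $G(\alpha)=0$ on $S^1$, identify the leading part of $G$ as a trigonometric polynomial of degree $\le 1$, and then invoke Lemma \ref{Cor_alm trig polyn has zero bd} with $k=1$ to conclude $\ord(G)\le 2$. The uniqueness provided by the implicit function theorem in the reduction step will ensure that distinct joint solutions correspond to distinct zeros of $G$.

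To set up the reduction, I would fix $a\in A_2$ and introduce shifted coordinates $X:=x_1+a_2$, $Y:=x_2+a_1$ (so $|X|,|Y|\le\eta(a_1^2+a_2^2)$ on $N_1(a_1,a_2)$), together with $K:=\sqrt{1-a_1^2-a_2^2}$, $c:=a_0-a_1a_2$, $u(\alpha):=a_3\cos\alpha+a_4\sin\alpha$, and $v(X,Y):=\sqrt{1-(X-a_2)^2-(Y-a_1)^2}$. Using the WLOG assumption that $\psi(a,\cdot)$ equals a constant $\psi$ on $N_1(a_1,a_2)$, one has $\varsigma(a,x)=(1-\psi)v+\psi K$, and the two gradient equations $\partial_{x_j}\tilde F_a=0$ take the form
\[
X=(1-\psi)(Y-a_1)u/v,\qquad Y=(1-\psi)(X-a_2)u/v.
\]
Since $|u|\le 2\epsilon_1$ and $|X|,|Y|\le\eta$, for $\epsilon_1,\eta$ chosen small enough this system is a contraction in $(X,Y)$ for every fixed $\alpha$, and the implicit function theorem yields a unique smooth solution $(X(\alpha),Y(\alpha))$ satisfying
\[
X(\alpha)=-(1-\psi)\tfrac{a_1}{K}u(\alpha)+O(\epsilon_1^2),\qquad Y(\alpha)=-(1-\psi)\tfrac{a_2}{K}u(\alpha)+O(\epsilon_1^2),
\]
together with analogous bounds through the second $\alpha$-derivative.

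Substituting back into the defining equation gives $G(\alpha):=\tilde F_a(X(\alpha)-a_2,Y(\alpha)-a_1,\alpha)=T(\alpha)+R(\alpha)$, where
\[
T(\alpha):=Ku(\alpha)+c,\qquad \|R\|_{C^2(S^1)}=O\!\left(\epsilon_1^2/K^2+\eta\,\epsilon_1\right).
\]
Here $T$ is a trigonometric polynomial of degree $\le 1$ (collecting the constant $c$, the linear part $Ku$, and the zero contribution from the $\psi K\cdot u$ piece of $\varsigma u$ not cancelled by $XY$); meanwhile $R$ collects the $(X,Y)$-quadratic term $XY$, the correction $\varsigma-K=O(\eta)$ multiplied by $u$, and the higher-order residuals of the implicit-function expansion. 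The lower bound $\|(c,a_3,a_4)\|\ge\epsilon_1(a_1^2+a_2^2)$ built into the definition of $A_2$ gives $\|T\|\gtrsim\min(1,K)\,\epsilon_1$. Choosing $\eta,\epsilon_1>0$ small enough that $\|R\|_{C^2}\le\delta_1(1)\,\|T\|$—which amounts to $\epsilon_1/K^2+\eta\lesssim\delta_1(1)\min(1,K)$—Lemma \ref{Cor_alm trig polyn has zero bd} applied with $k=1$ delivers $\ord(G)\le 2$, and the lemma follows.

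The main obstacle is making this smallness uniform in $a\in A_2$ as $a_1^2+a_2^2\to 1$, because $K\to 0$ there and $\epsilon_1/K^2$ threatens to blow up. This is handled by selecting the function $\epsilon_1(s)$ (and then $\eta(s)$) to vanish fast enough as $s\to 1$—at rate at least $(1-s)^{3/2}$, say—which is permitted by the freedom in the definitions in \S\ref{sect:2g+3} and Remark \ref{rmk:epsilon12}.
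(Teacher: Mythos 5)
Your proposal does not actually address the statement. The lemma to be proved is the abstract perturbation fact about trigonometric polynomials: for every $k$ there is $\delta_1(k)>0$ so that any $f\in C^\infty(S^1)$ with $\|f-T\|_{C^{2k}(S^1)}\le\delta_1(k)\|T\|$ for some degree-$k$ trigonometric polynomial $T$ satisfies $\ord(f)\le 2k$. What you have sketched instead is the argument for the downstream application in Appendix \ref{appendix:countRoots} (Lemma \ref{lem:countRootMain} and the two-item lemma that follows it): you reduce the system $\{\nabla\tilde F_a=0,\ \tilde F_a=0\}$ to a scalar equation on $S^1$ and then \emph{invoke} Lemma \ref{Cor_alm trig polyn has zero bd} with $k=1$ as a black box. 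Since the statement under review is precisely that black box, your writeup leaves it entirely unproven; as a proof of the lemma it is vacuous (and, read charitably as a proof of the appendix lemma, it is circular with respect to the assigned statement). Note also that the paper itself offers no proof here: the lemma is quoted verbatim as Corollary 7.5 of \cite{ChuLiWang2025GenusTwoI}.

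For comparison, a self-contained proof of the actual statement would run along these lines. By homogeneity one may normalize $\|T\|=1$. Arguing by contradiction, take sequences $T_j$ of degree-$k$ trigonometric polynomials with $\|T_j\|=1$ and $f_j\in C^\infty(S^1)$ with $\|f_j-T_j\|_{C^{2k}(S^1)}\to 0$ but $\ord(f_j)\ge 2k+1$. The unit sphere of the finite-dimensional space of trigonometric polynomials of degree $\le k$ is compact, so after passing to a subsequence $T_j\to T$ with $\|T\|=1$, hence $f_j\to T$ in $C^{2k}(S^1)$. If $m\le 2k+1$ zeros of $f_j$, counted with multiplicity, accumulate at a point $\alpha\in S^1$, a generalized Rolle argument produces zeros of $f_j^{(i)}$ converging to $\alpha$ for each $i\le m-1\le 2k$, so $T^{(i)}(\alpha)=0$ for those $i$, i.e.\ $T$ has a zero of multiplicity at least $m$ at $\alpha$. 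Summing over the cluster points, $T$ would be a nonzero trigonometric polynomial of degree at most $k$ with at least $2k+1$ roots counted with multiplicity, contradicting the elementary fact (via $z=e^{i\alpha}$, which converts $T$ into $z^{-k}$ times a nonzero algebraic polynomial of degree $\le 2k$) that such $T$ has at most $2k$ roots. None of these ingredients — the normalization, the compactness of the coefficient sphere, or the multiplicity bookkeeping under $C^{2k}$ convergence — appears in your proposal, so there is a genuine gap: the target lemma is simply not proved.
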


Below, we will denote  $r(a):=(1-\sqrt{a_1^2+a_2^2})/2$, and we let $\bD_r(a_1,a_2)$ be the closed  disc in $\R^2$ with center $(a_1,a_2)$ and radius $r.$ {\it Without loss of generality, we can always assume $\eta(a^2_1+a^2_2)<r(a)$.}  Now,  Lemma \ref{lem:countRootMain} immediately follows from the lemma below.
\begin{lem}
    If the functions $\epsilon_1,\eta>0$ are small enough, then $\tilde F_a$ satisfies the following for all $a\in A_2$.
\begin{enumerate}
\item\label{Item_Smalldelta_bx error} There exists $\bx_a \in C^\infty(S^1; \bD_{r(a)}(-a_2, -a_1))$ such that \[
         |\bx_a(\alpha) - (-a_2, -a_1)| < C_1\cdot (|a_3| + |a_4|)\,
       \] 
       for some constant $C_1=C_1(a^2_1+a^2_2)>0$,
       and that for  $(x_1,x_2, \alpha) \in \bD_{r(a)}(-a_2, -a_1)\times S^1$, 
       $$
         \nabla \tilde F_{a}(x_1,x_2, \alpha) = 0\quad\Longleftrightarrow	\quad 
         (x_1,x_2) = \bx_a(\alpha)\,.
       $$
    \item\label{Item_Smalldelta_ZerosBd} Let $\tilde f_{a}\in C^\infty(S^1)$ be given by $\tilde f_{a}(\alpha):= \tilde F_{a}(\bx(\alpha), \alpha)$. Then $\ord(\tilde f_a)\leq 2$, so in particular $\tilde f_a$ has at most $2$ distinct roots.
\end{enumerate}
\end{lem}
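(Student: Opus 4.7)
The plan is to follow the strategy of \cite[Lemma 7.7 (a),(b)]{ChuLiWang2025GenusTwoI}, adapted to $\tilde F_a$. A crucial simplification coming from the WLOG assumption that $\psi(a,\cdot)$ is constant on $N_1(a_1,a_2)$ is that $\varsigma(a,x)$ depends only on $(x_1,x_2)$; write it as $\varsigma_a(x_1,x_2)$. Note also the identity $\varsigma_a(-a_2,-a_1) = \sqrt{1-a_1^2-a_2^2}$ (independent of the value of $\psi_a$), and set $T_a(\alpha) := a_3\cos\alpha + a_4\sin\alpha$.

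For item (\ref{Item_Smalldelta_bx error}), the system $\nabla\tilde F_a = 0$ reads
$$x_2+a_1 = -(\partial_{x_1}\varsigma_a)\,T_a(\alpha), \quad x_1+a_2 = -(\partial_{x_2}\varsigma_a)\,T_a(\alpha).$$
At $T_a \equiv 0$ the unique solution in the disc $\bD_{r(a)}(-a_2,-a_1)$ is $(-a_2,-a_1)$, where the $(x_1,x_2)$-Hessian of $\tilde F_a$ equals the off-diagonal matrix $\bigl(\begin{smallmatrix}0&1\\1&0\end{smallmatrix}\bigr)$, hence invertible. For $a \in A_2$ we have $|T_a| \leq 2\epsilon_1$, and a quantitative contraction/implicit-function-theorem argument on $\bD_{r(a)}(-a_2,-a_1)$ would produce a smooth $\bx_a(\alpha)$ solving the system, unique on the whole disc, and satisfying $|\bx_a(\alpha) - (-a_2,-a_1)| \leq C_1(|a_3|+|a_4|)$. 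The constants depend only on bounds for $\nabla\varsigma_a$ and $\nabla^2\varsigma_a$ on that disc, which are in turn controlled by $a_1^2+a_2^2$ and $\eta(a_1^2+a_2^2)$.

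For item (\ref{Item_Smalldelta_ZerosBd}), I would substitute $\bx_a(\alpha)$ back and, using the identity at $(-a_2,-a_1)$ above, decompose
$$\tilde f_a(\alpha) = T(\alpha) + R(\alpha),$$
where $T(\alpha) := (a_0-a_1a_2) + \sqrt{1-a_1^2-a_2^2}\,T_a(\alpha)$ is a trigonometric polynomial of degree $\leq 1$, and $R$ collects the corrections. The product $(\bx_{a,1}(\alpha)+a_2)(\bx_{a,2}(\alpha)+a_1)$ is $O(|T_a|^2)$ and the error $(\varsigma_a(\bx_a(\alpha))-\sqrt{1-a_1^2-a_2^2})\,T_a(\alpha)$ is also $O(|T_a|^2)$; combined with $C^2$-bounds on $\bx_a(\alpha)$ obtained by differentiating the IFT relations, this gives $\|R\|_{C^2(S^1)} \leq C_2\,\epsilon_1^2$. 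On the other hand, the definition of $A_2$ forces $\|(a_0-a_1a_2,a_3,a_4)\| \geq \epsilon_1(a_1^2+a_2^2)$, which in turn yields $\|T\| \geq c\,\epsilon_1$ with $c>0$ depending on $a_1^2+a_2^2$ (via a positive lower bound on $\sqrt{1-a_1^2-a_2^2}$). Hence $\|R\|_{C^2}/\|T\| \lesssim \epsilon_1$, and by taking $\epsilon_1$ small enough we get $\|R\|_{C^2} \leq \delta_1(1)\,\|T\|$, where $\delta_1(1)$ is the constant from Lemma \ref{Cor_alm trig polyn has zero bd}. Applying that lemma with $k=1$ yields $\ord(\tilde f_a) \leq 2$, so $\tilde f_a$ has at most two roots counted with multiplicity, hence at most two distinct roots.

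The main technical point to watch is $(a_1,a_2)$-uniformity: as $(a_1,a_2)$ approaches $\partial\bD$, the factor $\sqrt{1-a_1^2-a_2^2}$ degenerates, so $\|T\|$ shrinks, while bounds on $\nabla^2\varsigma_a$ on $\bD_{r(a)}(-a_2,-a_1)$ (with $r(a) = (1-\sqrt{a_1^2+a_2^2})/2$) blow up. This is handled by the already-available freedom to shrink $\epsilon_1(a_1^2+a_2^2)$ and $\eta(a_1^2+a_2^2)$ appropriately fast as $a_1^2+a_2^2 \to 1$, as set up in \S \ref{sect:2g+3}, absorbing all the $(a_1,a_2)$-dependent constants into a choice of $\epsilon_1$ that makes both the contraction estimate in step~(\ref{Item_Smalldelta_bx error}) and the ratio $\|R\|_{C^2}/\|T\| \leq \delta_1(1)$ in step~(\ref{Item_Smalldelta_ZerosBd}) hold uniformly on $A_2$.
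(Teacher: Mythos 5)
Your proposal is correct and takes essentially the same route as the paper: a contraction/implicit-function argument around $(-a_2,-a_1)$ (using the invertible off-diagonal Hessian and the smallness of $a_3\cos\alpha+a_4\sin\alpha$) to produce the critical-point curve $\bx_a$ with the stated bound, followed by comparing $\tilde f_a$ with the degree $\leq 1$ trigonometric polynomial $\tilde F_a(-a_2,-a_1,\alpha)$ and invoking Lemma \ref{Cor_alm trig polyn has zero bd} with $k=1$. The only cosmetic difference is how the smallness hypothesis of that lemma is verified: the paper bounds the error by $C(|a_3|+|a_4|)^2\leq \tilde C(|a_3|+|a_4|)\,\|T_a\|$, whereas you use the lower bound $\|(a_0-a_1a_2,a_3,a_4)\|\geq \epsilon_1(a_1^2+a_2^2)$ built into the definition of $A_2$ to get $\|T\|\geq c\,\epsilon_1$ and then shrink $\epsilon_1$; both verifications are valid and absorb the $(a_1,a_2)$-dependent constants in the same way.
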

\begin{proof}
In the following the constant $C=C(a^2_1+a^2_2)>0$  changes from line to line. 

To prove (\ref{Item_Smalldelta_bx error}), we first recall that, by $a\in A_2$,
\begin{align}\label{eq:a0a3a4Bound}
       a_3^2 + a_4^2 + (a_0-a_1a_2)^2 \leq 4\epsilon_1(a_1^2+a_2^2)^2. 
     \end{align}
     It is easy to check that in $\bD_{r(a)}(-a_2, -a_1)$, 
     \begin{equation}\label{eq:boundVarSigma}
    |\varsigma| + |\nabla \varsigma| + |\nabla^2\varsigma| \leq \Lambda r(a)^{-4}      
     \end{equation}
     for some large absolute constant $\Lambda>0$.
Note $\nabla$ is only taken with respect to $x_1,x_2.$
We also compute the following:
     \begin{align}
       \label{eq:nablaXF} \nabla_x \tilde F_a(x, \alpha) & = (x_2+a_1, x_1+a_2) + (a_3\cos\alpha + a_4\sin\alpha)\nabla\varsigma(x), \\
       \nonumber \nabla_x \tilde F_a(-a_2, -a_1, \alpha) & = O(|a_3|+|a_4|)r(a)^{-4} = O(\epsilon_1(a_1^2+a_2^2)^{1/2})r(a)^{-4},
     \end{align}
     \begin{align}
       \nabla^2_x \tilde F_a(x, \alpha) = \begin{bmatrix}
         0 & 1 \\ 1 & 0 
       \end{bmatrix} + O(|a_3|+|a_4|)r(a)^{-4} \,. \label{Equ_Smalldelta_nabla^2_x F bd}
     \end{align}
     
     To find solutions  for $\nabla \tilde F_{a}(x, \alpha) = 0$ in $(x_1,x_2)$, given $a$ and $\alpha$, let us  consider the map $\varpi:\bD_{r(a)}(0,0)\to\R^2$ given by
\begin{align}
\nonumber \varpi(y_1,y_2)&:= -\begin{bmatrix}
    0 & 1 \\ 1 & 0
       \end{bmatrix} \nabla_x\tilde F_a(-a_2+y_1, -a_1+y_2, \alpha) + (y_1, y_2)\\
\label{eq:varPiMap}    &=\begin{bmatrix}
    0 & 1 \\ 1 & 0
       \end{bmatrix}(a_3\cos\alpha+a_4\sin\alpha)\nabla\varsigma(x).
\end{align}
Note,  $(y_1,y_2)$ is a fixed point of this map if and only if $(y_1-a_2,y_2-a_1)$ solves $\nabla \tilde F_{a}(x_1,x_2, \alpha) = 0$.

Due to the  bounds (\ref{eq:a0a3a4Bound}) and (\ref{eq:boundVarSigma}), we can choose $\epsilon_1$  small enough, depending on $\eta$,  such  that $\varpi$ maps $\bD_{\eta(a_1^2+a_2^3)}(0,0)$ onto itself, and that $\varpi$ is a contraction map. Then by the contraction mapping  theorem, $\varpi$  has some  unique fixed point  $y\in\bD_{\eta(a^2_1+a^2_2)}(0,0)$, which by (\ref{eq:varPiMap}) satisfies
$$|y|< C_1(a_1^2+a_2^2)\cdot (|a_3|+|a_4|)$$
for some constant $C_1(a_1^2+a_2^2)>0$.
so  item (\ref{Item_Smalldelta_bx error}) follows.

     To prove item (\ref{Item_Smalldelta_ZerosBd}),   we first observe that  by applying chain rule to $\nabla_x\tilde F_a(\bx_a(\alpha), \alpha) = 0$ and using (\ref{eq:nablaXF}),  we can derive
     \begin{align}
        \|\bx_a - (-a_2, -a_1)\|_{C^2(S^1)} \leq C(a_1^2+a_2^2)(|a_3|+|a_4|) \,. \label{Equ_Smalldelta_|bx_a|_C^6 bd}
     \end{align}
Now, for each $a\in A_2$, we consider the following trigonometric polynomial of degree $\leq 1$: 
     \begin{align*}
       T_a(\alpha) & := \tilde F_a(-a_2, -a_1, \alpha) \\
       & \ = (a_0-a_1a_2) + \varsigma(-a_2, -a_1)(a_3\cos\alpha +a_4\sin\alpha).
     \end{align*}
     Note, 
     \begin{align*}
       \tilde f_a(\alpha) - T_a(\alpha) & = (\bx_a(\alpha)_1+a_2)(\bx_a(\alpha)_2+a_1) \\
       & + (\varsigma(\bx_a(\alpha))-\varsigma(-a_2, -a_1))(a_3\cos\alpha +a_4\sin\alpha)\,.
     \end{align*}
     Then from this, and using (\ref{Equ_Smalldelta_|bx_a|_C^6 bd}),
\begin{align*}
       \|\tilde f_a - T_a\|_{C^2(S^1)} & \leq C(a_1^2+a_2^2) \Big(\|\bx_a-(-a_2,-a_1)\|_{C^2(S^1)}^2 \\
       & \;\;\qquad\qquad + (|a_3|+|a_4|)\|\bx_a-(-a_2,-a_1)\|_{C^2(S^1)} \Big) \\
       & \leq C(a_1^2+a_2^2)(|a_3|+|a_4|)^2 \\
       & \leq \tilde C(a_1^2+a_2^2)(|a_3|+|a_4|)\|T_a\|,
     \end{align*}
     for some constant $\tilde C(a^2_1+a^2_2)>0$.
     Hence let us take $\epsilon_1(a_1^2+a_2^2)$ small enough  such that $\tilde C(a_1^2+a_2^2)(|a_3|+|a_4|) \leq \delta_1(1)$ (recall by (\ref{eq:a0a3a4Bound}), $a_3,a_4$ are controlled by $\epsilon_1$), where $\delta_1(1)$ is from Lemma \ref{Cor_alm trig polyn has zero bd}. Then item (\ref{Item_Smalldelta_ZerosBd})  follows immediately based on Lemma \ref{Cor_alm trig polyn has zero bd}. \end{proof}

\printbibliography
\end{document}